\setlist[enumerate,1]{
  label=(\roman*),    
  leftmargin=*, 
   labelsep=-0.25em,       
  align=left,      
  topsep=2pt,           
  partopsep=0pt,        
  itemsep=2pt,          
  parsep=0pt 
}
\newlength{\affilsep}
\begin{document}

\catchline{}{}{}{}{}

\title{Architecture Induces Structural Invariant Manifolds of Neural Network Training Dynamics}

\author{
    Jiajie Zhao$^{1,a}$, Tao Luo$^{1, 3,b,*}$ and 
    Yaoyu Zhang$^{1,2,c,*}$}

\address{
    $^1$ School of Mathematical Sciences, Institute of Natural Sciences and MOE-LSC, Shanghai Jiao Tong University, Shanghai, 200240, China.
    \\ [\affilsep]
    $^2$School of Artificial Intelligence, Shanghai Jiao Tong University, Shanghai, 200240, China
    \\[\affilsep]
    $^3$CMA-Shanghai, Shanghai Jiao Tong University, Shanghai, 200240, China.
    \\[\affilsep]
$^a$zjj0216@sjtu.edu.cn\\[\affilsep]$^b$luotao41@sjtu.edu.cn\\[\affilsep]$^c$zhyy.sjtu@sjtu.edu.cn\\[\affilsep]
$^*$Corresponding authors.
}

\maketitle

\begin{abstract}
While architecture is recognized as key to the performance of deep neural networks, its precise effect on training dynamics has been unclear due to the confounding influence of data and loss functions. This paper proposed an analytic framework based on the geometric control theory to characterize the dynamical properties intrinsic to a model's parameterization. We prove that the Structural Invariant Manifolds (SIMs) of an analytic model $F(\vtheta)(\vx)$—submanifolds that confine gradient flow trajectories independent of data and loss—are unions of orbits of the vector field family $\{\nabla_{\vtheta} F(\cdot)(\vx)\mid\vx\in\sR^d\}$. We then prove that a model's symmetry, e.g., permutation symmetry for neural networks, induces SIMs. Applying this, we characterize the hierarchy of symmetry-induced SIMs in fully-connected networks, where dynamics exhibit neuron condensation and equivalence to reduced-width networks. For two-layer networks, we prove all SIMs are symmetry-induced, closing the gap between known symmetries and all possible invariants. Overall, by establishing the framework for analyzing SIMs induced by architecture, our work paves the way for a deeper analysis of neural network training dynamics and generalization in the near future.

\end{abstract}

\keywords{neural network architecture; training dynamics; geometric control theory; structural invariant manifold}

\ccode{AMS Subject Classification: 68T07, 34H05, 93C10, 93B03, 93B27
}

\section{Introduction}

Neural networks serve as the core engine of modern AI applications. The architecture of a network—that is, its specific scheme for parameterizing functions—is widely recognized as the primary factor influencing its training behavior and ultimate generalization performance on a given task~\cite{krizhevsky2012imagenet,he2016deep,vaswani2017attention}. Nevertheless, the nonlinear nature of these architectures gives rise to highly nonlinear training dynamics, making the analysis of these dynamics and the precise consequences of architectural choices a persistently challenging problem\cite{ma2020towards}.

In recent years, several theoretical developments in deep learning have shed light on this problem. One line of research focuses on a key phenomenon in nonlinear training dynamics known as condensation\cite{luo2021phase,xu2025overview} (also referred to as quantization\cite{maennel2018gradient}, weight clustering\cite{brutzkus2019larger}, or alignment\cite{min2023early}). This widely observed process describes how neurons within a layer tend to align with one another during training. The study of condensation illuminates how neural networks adaptively extract features from data and reveals an implicit bias towards simpler functions that can be expressed by narrower networks\cite{xu2025overview}. Crucially, condensation results from the nonlinear network architecture and is absent in any linear models. In addition, a series of works have revealed that the permutation symmetry of neural network architectures profoundly impacts both training dynamics and the loss landscape's critical point distribution\cite{fukumizu2019semi,ziyin2023symmetry,simsek2021geometry,zhang2021embedding}.  Regarding the dynamics, it has been shown that permutation-invariant subspaces are also invariant under the training dynamics\cite{simsek2021geometry,ziyin2023symmetry}. Regarding the loss landscape, the embedding principle demonstrates that a network inherits all critical points from any narrower network within its architecture\cite{zhang2021embedding,simsek2021geometry,fukumizu2019semi}. Furthermore, some recent works have leveraged Lie brackets and the Frobenius theorem to systematically identify conserved quantities and the lower-dimensional invariant manifolds they induce in nonlinear models like deep linear and ReLU networks\cite{marcotte2023abide,marcotte2025intrinsic}. These conservation laws, inherent to the model architecture, constrain the models' global training dynamics.

Despite these advancements, uncovering the exact impact of a nonlinear architecture on training dynamics remains challenging, primarily due to the difficulty of isolating its effect from the complications of the training data and loss function. In this paper, we address this challenge by introducing the concept of structural invariant manifold (SIM), which is defined as a submanifold of the parameter space that confines gradient flow trajectories independent of training data and loss, as the central object for our study. By employing the geometric control theory, in particular the Hermann--Nagano Theorem \cite{herman-nagano}, we 
uncover the dynamical effect of architecture as follows:
\parbox{\textwidth}{\centering\textbf{Architecture partitions the parameter space into nonintersecting orbits.}}
These orbits and their unions give rise to all SIMs of the gradient flow. Note that all models possess a trivial SIM, i.e., the entire parameter space $\sR^M$. Our results yield a key insight into the dichotomy between linear and nonlinear models: a generic linear model possesses only the trivial SIM. Consequently, the existence of non-trivial SIMs, which often have much lower dimensions than the full parameter space, is a hallmark of how a nonlinear architecture fundamentally shapes training dynamics. We remark that our framework, grounded in geometric control theory, offers a unified mathematical foundation for the analysis of architecture-induced invariant structures. This approach bridges the gap between the separate treatments of invariant structures resulting from symmetry or conservation law in the literature\cite{simsek2021geometry,ziyin2023symmetry,marcotte2023abide,marcotte2025intrinsic}.

In general, uncovering the orbits of complex nonlinear models like neural networks is technically difficult. In this work, we identify a general family of architectural properties—namely, invariant maps and their induced symmetry groups and infinitesimal symmetries—that can conveniently reveal a series of SIMs. By determining all such symmetry groups and infinitesimal symmetries for general deep neural networks, we uncover a large family of SIMs with a hierarchical structure: non-trivial SIMs exist with dimensions ranging from low to high, where each lower-dimensional SIM exhibits neuron condensation and is functionally equivalent to a reduced-width network. While obtaining all SIMs for neural networks remains a general challenge, we take a step forward by proving that, for generic two-layer networks, all SIMs are indeed symmetry-induced. The properties and analytical techniques we develop for studying these SIMs are broadly applicable and extend to other nonlinear models, such as matrix factorization\cite{bai2024connectivity, koren2009matrix}, and to other architectures, including  Convolutional Neural Networks\cite{krizhevsky2012imagenet,zhang2022linear} and Transformers\cite{vaswani2017attention,chen2025from}. 

Overall, this paper establishes an analytic framework for identifying SIMs induced by model architecture, thereby elucidating how specific architectural designs inherently constrain gradient flow training dynamics globally. It is important to note, however, that the realized training dynamics and ultimate generalization performance are also profoundly influenced by other factors, including the target function, training data, initialization, and loss function. An important and promising direction for future research is to study how these elements interact with these architecture-induced geometric structures to determine the actual trajectory of training and generalization performance.

Our main results are summarized as follows:

\begin{itemize}
    \item \textbf{A Geometric Framework for SIMs (Theorem~\ref{def:sim}):} We introduce the concept of  \emph{Structural Invariant Manifolds (SIMs)}, and we prove they are unions of orbits of the vector field family $\fF = \left\{ \nabla_{\vtheta} F(\cdot)(\vx) \mid \vx \in \mathbb{R}^d \right\}$ (Section~\ref{sec:SIM}). This result establishes a foundational framework for analyzing the gradient dynamics constraints imposed by the network architecture.

    \item \textbf{Symmetry-Induced SIMs (Theorem~\ref{cor:symmetries induced sim}):} We prove that invariant maps can induce SIMs. This mechanism forges a crucial link between architectural symmetries and SIMs, as these invariant maps are themselves a natural consequence of a model's underlying symmetries.

    \item \textbf{Characterization in Deep Neural Networks (Theorem~\ref{thm:all symmetry of neural network}):} Building on this connection, we provide a  characterization of symmetry-induced SIMs in deep neural networks. This result is a crucial step towards understanding the complete set of SIMs in deep neural networks.

    \item \textbf{Complete Characterization for Generic Two-Layer Neural Networks (Theorem~\ref{thm:final orbit}):} We prove that for generic two-layer neural networks, all SIMs are induced by architectural symmetries. This provides a complete characterization of SIMs for this model class.
\end{itemize}

\section{Preliminary}

\label{sec:preliminary}
\subsection{Problem setting}
We define a \textbf{parametric model} as a map  
$
F:\sR^M \to C(\sR^d, \sR),
$  
where \(M\) and \(d\) are positive integers, and \(C(\sR^d, \sR)\) denotes the set of continuous functions from \(\sR^d\) to \(\sR\). 
Given a parameter  \(\vtheta \in \sR^M\), the  output function \(F(\vtheta)\) is  a function from \(\sR^d\) to \(\sR\).
The value of this function for an input $\vx\in \sR^d$ is denoted by $F(\vtheta)(\vx)$.
Given a parametric model $F$, a dataset $S=\{(\vx_i,y_i)\}_{i=1}^{n}$ and an analytic loss function $\ell: \sR\times \sR \to \sR$,  we can define the empirical loss function as
$L(\vtheta)=\sum_{i=1}^n \ell(F(\vtheta)(\vx_i),y_i)$.
We analyze in this work the gradient flow given by
\begin{equation}
    \frac{\rd\vtheta}{\rd t} = -\nabla_{\vtheta}L(\vtheta) = -\sum_{i=1}^{n}\nabla \ell(F(\vtheta)(\vx_i), y_i) \, \nabla_{\vtheta} F(\vtheta)(\vx_i).
    \label{eq:gradient flow}
\end{equation}
Throughout the paper, \( \nabla \ell \) denotes the gradient of the loss function \( \ell \) with respect to its first argument,  and we assume that \( F \) is an analytic parametric model (see Definition~\ref{def:analytic model}). When defining \( \nabla_{\vtheta} F(\vtheta)(\vx) \), we regard \( F \) as a function of two variables, \( F(\vtheta, \vx) \), and interpret \( \nabla_{\vtheta} F(\vtheta)(\vx) \) as the partial derivative of \( F \) with respect to \( \vtheta \), evaluated at the point \( (\vtheta, \vx) \).
 Additionally, while the objective function \( L(\vtheta) \) depends on both the dataset \( S \) and the loss function \( \ell \), we omit these dependencies from the notation for simplicity.

\begin{definition}[analytic parametric model]
    A parametric model \( F(\vtheta)( \vx) \), where \( \vtheta \in \mathbb{R}^M \) and \( \vx \in \mathbb{R}^d \), is called an \textbf{analytic parametric model} if \( F \), considered as a function of $\vtheta$ and $\vx$, is a real-valued analytic function.
    \label{def:analytic model}
\end{definition}

A major objective of this paper is to discuss invariant manifolds of Eq.~\eqref{eq:gradient flow} that are independent of loss function $\ell$ and dataset $S$.
The definition of an invariant manifold is provided in Definition~\ref{def:invariant manifold}.

\begin{definition}[vector field induced invariant set (manifold)\footnote{See Definition~\ref{def:immersed submanifold} for definition of an immersed submanifold, and Definition~\ref{def: analytic vector fields} for the definition of an analytic vector field.}]
Suppose $M$ is a positive integer and $\fM$ is a subset of $\sR^M$. Let $X$ be an analytic vector field on \( \sR^M\), and let \( \vtheta(t)\) denote the solution to the Cauchy problem \( \dot{\vtheta} = X(\vtheta),\vtheta(0) = \vtheta_0 \). 
We say that \( \mathcal{M} \) is an \textbf{invariant set} (with respect to the vector field \( X \)) if for every \( \vtheta_0 \in \mathcal{M} \), the solution \( \vtheta(t) \) remains in \( \mathcal{M} \) for all \( t \) in its maximal interval of existence.  We also say \textbf{$\fM$ is invariant under $X$}. Moreover, if $\fM$ is an immersed submanifold of $\sR^M$, we say $\fM$ is an \textbf{invariant manifold}.
\label{def:invariant manifold}
\end{definition}

In this paper, the primary parametric models we consider are the neural networks defined in Definitions~\ref{def:fully connected network} and~\ref{def:neural network}.

\begin{definition}[multi-layer fully-connected neural network]
Let $L\geq 2$ be an integer. 
Consider the neural network \( F(\vtheta)(\vx) \) defined inductively by
\[
\begin{aligned}
\va^{(0)} &= \vx, \\
\va^{(l)} &= \sigma\left(\vW^{(l)} \va^{(l-1)} + \vb^{(l)}\right), \quad &&\text{for } l = 1, 2, \ldots, L-1, \\
\va^{(L)} &= \vW^{(L)} \va^{(L-1)}+\vb^{(L)},
\end{aligned}
\]
where \( F(\vtheta)(\vx) = \va^{(L)} \) and the parameters are \( \vtheta = \left(\vW^{(l)}, \vb^{(l)}\right)_{l=1}^L \). Here, each \( \vW^{(l)} \) is an \( n_l \times n_{l-1} \) matrix, and \( \vb^{(l)} \), \( \va^{(l)} \) are vectors in \( \mathbb{R}^{n_l} \). The activation function $\sigma:\sR\to\sR$ is assumed to be real-analytic and acts entrywise on vectors. Technically when writing \( \vtheta = \left(\vW^{(l)}, \vb^{(l)}\right)_{l=1}^L \) the matrix $\vW^{(l)}$ should be flattened to a vector, but we omit it for simplicity. In this paper we consider scalar output, i.e. $n_L=1$.
\label{def:fully connected network}    
\end{definition}

\begin{definition}[two-layer neural network]
The network is represented as 
$
F(\vtheta)(\vx) = \sum_{i=1}^m a_i \sigma(\vw_i^{\T} \vx),
$
where \(\vx \in \mathbb{R}^d,a_i\in \sR,\vw_i\in \sR^d\), \(\vtheta = (a_i,\vw_i)_{i=1}^m \in \mathbb{R}^{(d+1)m}\), and \(m\) is the width of the network. The function \(\sigma: \mathbb{R} \rightarrow
\mathbb{R}\) is the activation function, which is assumed to be a non-polynomial, real analytic function.  
\label{def:neural network}
\end{definition}

Numerous symmetries exist in neural networks. We will introduce the concept here and discuss it in detail in Section~\ref{sec:symmetry}.
\begin{definition}[symmetry group]
Let \( F \) be an analytic parametric model, and let \( G \) be a group (or semigroup) acting on the parameter space of \( F \). If the action of any element \( g \in G \) leaves the output of \( F \) invariant for all inputs, i.e.,
$
F(g(\boldsymbol{\theta}))(\mathbf{x}) = F(\boldsymbol{\theta})(\mathbf{x}),  \forall g \in G,  \forall \boldsymbol{\theta} \in \mathbb{R}^M,  \forall \mathbf{x} \in \mathbb{R}^d,
$
then \( G \), together with its action, is called a \textbf{symmetry group (or semigroup)} of \( F \).
Furthermore, if every action of \( G \) is an orthogonal linear transformation, then \( G \) is called an \textbf{orthogonal symmetry group}.
\label{def:symmetry group_pre}
\end{definition}

\subsection{Orbit}
To analyze the dynamics of $\vtheta(t)$, we now introduce concepts from geometric control theory. A detailed introduction of geometric control theory is provided in~\ref{appendix:def}.

\begin{definition}[orbit, page 33 of Ref.~\refcite{jurdjevic1997geometric}]\footnote{Please see~\ref{appendix:def}
for the definition of $e^{tX}$, analytic manifold,  analytic vector field and pseudogroup.}
    Let $\fF$ be a family of analytic vector fields on an analytic manifold $\fM$. Let $G=G(\fF)$ be the group (pseudogroup) of diffeomorphisms (local diffeomorphisms) generated by $\{e^{tX}\mid t\in\sR, X\in\fF\}$ under composition. For any $\vtheta\in \fM$, we define the \textbf{orbit} of $\fF$ through $\vtheta$ as $\{g(\vtheta)\mid g\in G\}$, which we denote by $O_{\fF}(\vtheta)$. 
\end{definition}

Given a family of analytic vector fields, each of its orbits forms an analytic immersed submanifold. The dimension of an orbit is determined by the Lie closure of the vector field family, as stated in Definition~\ref{def:Lie closure} and Theorem~\ref{thm:Hermann-Nagano}. 

In Ref.~\refcite{jurdjevic1997geometric}, Theorem~\ref{thm:Hermann-Nagano} is stated under an analytic regularity assumption, whereas Corollary~\ref{theorem: page 44 book} is presented as a theorem under smooth regularity. Given that this paper operates within analytic regularity, we introduce an analytic version of Corollary~\ref{theorem: page 44 book}. This version is a direct consequence of Theorem~\ref{thm:Hermann-Nagano}, and we therefore designate it as a corollary.

\begin{definition}[Lie closure]
Let $\fM$ be an analytic manifold and  $\fF$ be a family of analytic vector fields on  $\fM$. We use $\mathrm{Lie}(\fF)$ to denote the Lie algebra of analytic
vector fields generated by $\fF$. For any point $\vtheta\in \fM$, $\mathrm{Lie}_{\vtheta} (\fF)$ is defined to be the set of all tangent vectors $V(\vtheta)$ with $V$ in $\mathrm{Lie}(\fF)$. We call $\mathrm{Lie}_{\vtheta} (\fF)$ the \textbf{Lie closure} of $\fF$ at $\vtheta$.
\label{def:Lie closure}
\end{definition}

\begin{theorem}
(Hermann--Nagano Theorem, Theorem 6 in Section 2 of Ref.~\refcite{jurdjevic1997geometric}) Let $\fM$ be an analytic manifold, and $\mathcal{F}$ a family of analytic vector fields on $\fM$. Then:
\begin{enumerate}
    \item Each orbit of $\mathcal{F}$ is an (immersed) analytic submanifold of $\fM$.
    \item If $\fN$ is an orbit of $\mathcal{F}$, then the tangent space of $\fN$ at $\vtheta$ is given by $\operatorname{Lie}_{\vtheta}(\mathcal{F})$. In particular, the dimension of $\operatorname{Lie}_{\vtheta}(\mathcal{F})$ is constant as $\vtheta$ varies over $\fN$.
\end{enumerate}
\label{thm:Hermann-Nagano}
\end{theorem}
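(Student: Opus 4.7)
The plan is to construct local charts for each orbit from flows of suitable vector fields, and to show that the pointwise Lie-closure distribution $\vtheta\mapsto \mathrm{Lie}_{\vtheta}(\fF)$ is invariant under the pseudogroup $G(\fF)$. Together these two facts give both conclusions simultaneously: each orbit inherits the structure of an immersed analytic submanifold, its tangent space at $\vtheta$ equals $\mathrm{Lie}_{\vtheta}(\fF)$, and consequently $\dim\mathrm{Lie}_{\vtheta}(\fF)$ is constant along the orbit.

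The crucial analytic ingredient is the expansion
\[
(e^{tX})^{*} Y \;=\; \sum_{n\ge 0} \frac{t^{n}}{n!}\,\mathrm{ad}_{X}^{n} Y,
\]
valid for analytic vector fields $X,Y$ on a sufficiently small time interval. I would first verify this by observing that both sides satisfy the linear ODE $\frac{\rd}{\rd t}Z_{t}=\mathrm{ad}_{X}Z_{t}$ with $Z_{0}=Y$, then use analyticity to upgrade formal equality of Taylor series to genuine convergence of the sum. Since each summand lies in $\mathrm{Lie}(\fF)$ when $X\in\fF$ and $Y\in\mathrm{Lie}(\fF)$, and pointwise evaluation of an analytic limit equals the limit of the evaluations, I obtain $\bigl((e^{tX})^{*}Y\bigr)(\vtheta)\in\mathrm{Lie}_{\vtheta}(\fF)$. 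Iterating over compositions $g=e^{t_{1}X_{1}}\circ\cdots\circ e^{t_{r}X_{r}}$ in $G(\fF)$ then yields $dg_{\vtheta}\bigl(\mathrm{Lie}_{\vtheta}(\fF)\bigr)\subseteq\mathrm{Lie}_{g(\vtheta)}(\fF)$, and applying the same argument to $g^{-1}$ upgrades this to equality, so the distribution has constant dimension along each orbit.

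With $k:=\dim\mathrm{Lie}_{\vtheta_{0}}(\fF)$ in hand, I would choose vector fields $Y_{1},\ldots,Y_{k}$ whose values at $\vtheta_{0}$ span $\mathrm{Lie}_{\vtheta_{0}}(\fF)$ and whose flows lie in $G(\fF)$; the natural candidates are pullbacks $Y_{i}=g_{i}^{*}X_{i}$ with $X_{i}\in\fF$ and $g_{i}\in G(\fF)$, since $e^{tY_{i}}=g_{i}^{-1}\circ e^{tX_{i}}\circ g_{i}\in G(\fF)$. Defining
\[
\Phi(t_{1},\ldots,t_{k}) \;=\; e^{t_{1}Y_{1}}\circ\cdots\circ e^{t_{k}Y_{k}}(\vtheta_{0}),
\]
differentiation at the origin gives $d\Phi_{0}(e_{i})=Y_{i}(\vtheta_{0})$, so $\Phi$ is an analytic immersion near $0$ whose image lies in $O_{\fF}(\vtheta_{0})$. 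Translating this chart by arbitrary elements of $G(\fF)$ produces a compatible analytic atlas covering the whole orbit, with tangent space $\mathrm{Lie}_{\vtheta}(\fF)$ at every point by the invariance of the distribution.

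The main obstacle is the analytic convergence of the $\mathrm{ad}$-series and the consequent $G(\fF)$-invariance of the Lie-closure distribution; this is a genuinely analytic fact that fails in the merely smooth category without stronger hypotheses, as reflected in the additional assumptions of the smooth Stefan--Sussmann theorem. A secondary care point is the surjectivity claim above — that pullbacks $g^{*}X$ evaluated at $\vtheta_{0}$ actually span all of $\mathrm{Lie}_{\vtheta_{0}}(\fF)$, not merely $\{X(\vtheta_{0}):X\in\fF\}$ — which I would establish by differentiating $s\mapsto (e^{sY})^{*}X$ at $s=0$ to exhibit $[Y,X](\vtheta_{0})$ as a limit of values of $G(\fF)$-conjugates, then iterating to recover all iterated brackets; finite-dimensionality of $T_{\vtheta_{0}}\fM$ makes this span automatically closed, so it indeed contains the full Lie closure.
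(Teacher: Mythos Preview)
The paper does not prove this theorem; it is stated in the preliminaries as a known result from geometric control theory, cited directly from Jurdjevic's book. There is therefore no ``paper proof'' to compare against.

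Your proposal is a correct outline of the classical proof, essentially the Sussmann--Stefan orbit-theorem construction sharpened by the analytic hypothesis. The key distinguishing step you identify --- convergence of the $\mathrm{ad}$-series $(e^{tX})^{*}Y=\sum_{n\geq 0}\frac{t^{n}}{n!}\,\mathrm{ad}_{X}^{n}Y$ and the resulting $G(\fF)$-invariance of the distribution $\vtheta\mapsto\mathrm{Lie}_{\vtheta}(\fF)$ --- is exactly what separates Hermann--Nagano from the smooth Stefan--Sussmann theorem, where one must instead assume local finite generation or involutivity of a suitable module. Your treatment of the secondary point (that pullbacks $g^{*}X$ span all of $\mathrm{Lie}_{\vtheta_{0}}(\fF)$, not just $\{X(\vtheta_{0}):X\in\fF\}$) via differentiation in the flow parameter is also standard and correct. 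If you were to write this out in full, the only places requiring care are the uniform estimates guaranteeing actual convergence of the $\mathrm{ad}$-series on a neighborhood (not merely formal Taylor equality), and the verification that the transition maps between the flow-box charts $\Phi$ translated by different $g\in G(\fF)$ are analytic --- both routine but not entirely trivial.
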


\begin{corollary}(Theorem 3 in Section 2 of Ref.~\refcite{jurdjevic1997geometric})
Let $\fM$ be an analytic manifold and $\fF$ be a family of analytic vector fields on $\fM$.
    Suppose that $\mathcal{F}$ is such that $\mathrm{Lie}_{\vtheta_0}(\mathcal{F})=T_{\vtheta_0}\fM$ for some $\vtheta_0$
    in $\fM$. Then the orbit  of $\mathcal{F}$ through $\vtheta_0$ is open. If, in addition, $\mathrm{Lie}_{\vtheta}(\mathcal{F})=T_{\vtheta}\fM$ for each $\vtheta$ in $\fM$, and if $\fM$ is connected, then there is only one orbit of $\mathcal{F}$ equal to $\fM$.
   \label{theorem: page 44 book} 
\end{corollary}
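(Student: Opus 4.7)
The plan is to derive this corollary as two short consequences of the Hermann--Nagano Theorem (Theorem~\ref{thm:Hermann-Nagano}), one for the local statement at $\vtheta_0$ and one using a standard connectedness argument.

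For the first claim, I would denote the orbit through $\vtheta_0$ by $\fN = O_{\fF}(\vtheta_0)$. By Part 1 of Hermann--Nagano, $\fN$ is an immersed analytic submanifold of $\fM$, and by Part 2 its tangent space at $\vtheta_0$ is $\mathrm{Lie}_{\vtheta_0}(\fF)$, which by hypothesis equals $T_{\vtheta_0}\fM$. Hence $\dim \fN = \dim \fM$, so the natural inclusion of $\fN$ into $\fM$ is an immersion between manifolds of the same dimension. By the inverse function theorem, such an immersion is a local diffeomorphism at $\vtheta_0$, in particular an open map on a neighborhood, so $\fN$ contains an open neighborhood of $\vtheta_0$ in $\fM$. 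Since the same argument applies at every point of $\fN$ (Hermann--Nagano guarantees that the Lie closure has constant rank along an orbit), $\fN$ is open in $\fM$.

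For the second claim, under the global hypothesis $\mathrm{Lie}_{\vtheta}(\fF) = T_{\vtheta}\fM$ for every $\vtheta \in \fM$, the first part applied pointwise shows that every orbit of $\fF$ is open in $\fM$. Since orbits form a partition of $\fM$ (they are the equivalence classes of the reachability relation under the pseudogroup $G(\fF)$), for any single orbit $O$ the complement $\fM \setminus O$ is the union of the remaining orbits, hence also open. Thus $O$ is both open and closed in $\fM$. Since $\fM$ is connected and $O$ is nonempty, $O = \fM$, establishing that there is a unique orbit equal to $\fM$.

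The main subtlety to be careful about is the distinction between the intrinsic manifold topology on $\fN$ supplied by Hermann--Nagano and the subspace topology inherited from $\fM$: a priori an immersed submanifold need not be open even when it has full dimension. The key point to verify is precisely that when the immersion has rank equal to $\dim \fM$, the inverse function theorem forces the two topologies to agree locally, so $\fN$ genuinely coincides with an open subset of $\fM$ rather than merely parametrizing a dense set. Beyond this observation, no substantial obstacle arises, as the remainder of the argument is purely topological.
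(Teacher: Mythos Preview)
Your proposal is correct and aligns with the paper's treatment: the paper does not give an explicit proof but simply remarks that this analytic version is a direct consequence of the Hermann--Nagano Theorem, and your argument spells out precisely that deduction. The two-step structure you use (openness from full-rank immersion plus the clopen/connectedness argument) is the standard route and is exactly what the paper's phrase ``direct consequence'' is pointing to.
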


\section{Structural Invariant Manifold (SIM) and its framework }
\label{sec:SIM}

\begin{figure}[h]
    \centering
    \includegraphics[width=0.95\linewidth]{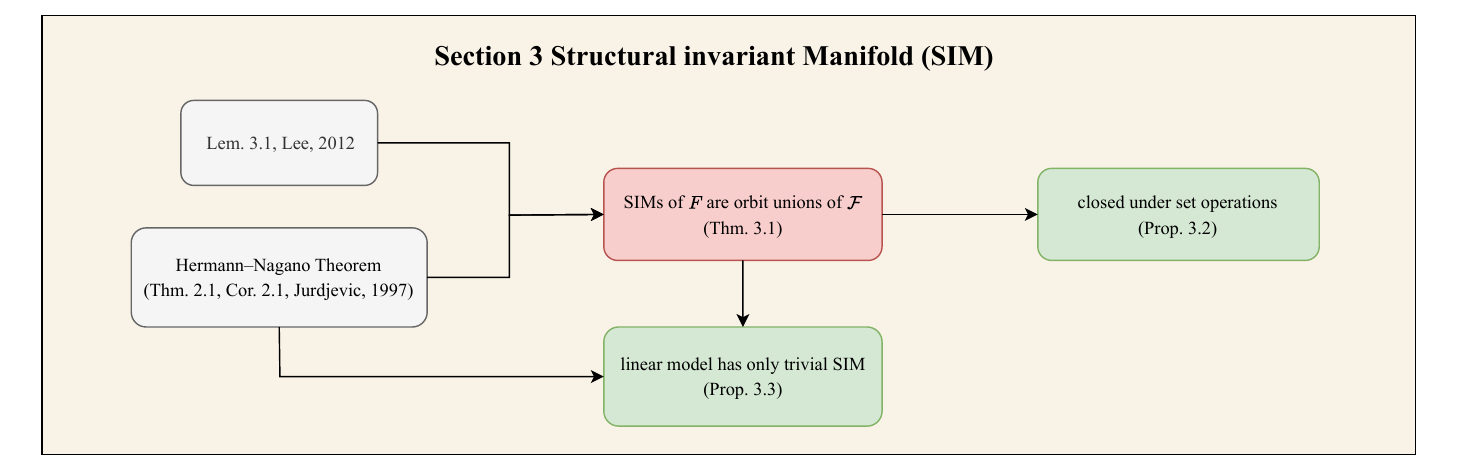}
    \caption{Flowchart illustrating the logical structure of Section~\ref{sec:SIM}.}
    
    \label{fig:sec_3}
\end{figure}

This section introduces the concept of Structural Invariant Manifolds (SIMs) and develops the corresponding theoretical framework. The logical flow of the section is illustrated in Figure~\ref{fig:sec_3}. Our main result, Theorem~\ref{def:sim}, establishes that SIMs are precisely the unions of orbits  of the vector field family \(\mathcal{F}\). We then explore additional properties of SIMs, showing that they are closed under standard set operations (Proposition~\ref{lemma:closed under set operations}) and that linear models admit only trivial SIMs (Proposition~\ref{example:linear}). This rigorous foundation sets the stage for the deeper analysis of SIMs in the remainder of the paper.

\subsection{SIM as a key tool for the recovery puzzle}
Considering the simplest setup where a parametric model $F(\vtheta)(\vx)$ is used to recover a target function $f^*\in\{F(\vtheta)(\cdot)\mid \vtheta \in \sR^M\}$ from $n$ training samples $\{(\vx_i,f^*(\vx_i))\}_{i=1}^{n}$. The gradient flow training dynamics is written as
\begin{equation}
    \frac{\rd\vtheta}{\rd t}=-\nabla_{\vtheta} L(\vtheta)=-\sum_{i=1}^{n}\nabla\ell(F(\vtheta)(\vx_i),f^*(\vx_i))\nabla_{\vtheta} F(\vtheta)(\vx_i).
\label{eq:gradient flow_GC}
\end{equation}
A fundamental question in machine learning, known as the recovery problem, is to identify the conditions that allow above dynamics to successfully find the target function $f^*$. If $F(\vtheta)(\vx)$ is a linear model in $\vtheta$ with linearly independent basis and a proper loss $\ell(\cdot,\cdot)$, then it is well-known that $f^*$ can be recovered generically from $n\geq M$ samples\cite{zhang2022linear}. However, when we change $F(\vtheta)(\vx)$ to a nonlinear model,  our understanding becomes extremely limited. 

A particularly mysterious phenomenon is that nonlinear models like neural networks can recover certain targets even under severe overparameterization $n\ll M$\cite{zhao2024disentangle,zhang2022linear,zhang2023optimistic,zhang2023geometry}. This phenomenon sparks the following recovery puzzle:
\parbox{\textwidth}{\centering\textbf{How neural networks recover targets under overparameterization?}}
Note that this puzzle is a specialization of the widely acknowledged generalization puzzle in deep learning theory---why overparameterized neural networks often generalize well\cite{breiman2018reflections,zhang2016understanding}. Yet, we argue that the recovery puzzle well serves as the cornerstone of the generalization puzzle: (i) the notion of recovery resolves the ambiguity in the notion of ``generalize well''; (ii) understanding the conditions for recovery is often the first and the key step towards understanding generalization as in the cases of linear regression, signal processing\cite{shannon1948mathematical,luke1999origins} (e.g., Nyquist-Shannon sampling theorem\cite{shannon1948mathematical}), and compressed sensing\cite{candes2006robust,donoho2006compressed,candes2008introduction}. 

In recent years, progress has been made by solving some weaker versions of the recovery puzzle. Ref.~\refcite{zhang2022linear} proves a recovery guarantee for neural networks under overparameterization in the sense of local linear recovery, i.e., recovering targets in the tangent space of some optimal point in the target set $F^{-1}(f^*)$. Ref.~\refcite{zhang2023geometry} makes a step further to prove a recovery guarantee in the sense of local recovery, i.e., recovering targets in the neighbourhood of the target set $F^{-1}(f^*)$. Despite the progress, how one can leverage these local recovery guarantees to a global one remains an extremely difficult problem. Particularly, we lack means to globally back-trace the gradient flow dynamics from the vicinity of the target set to see if there exists a generic initialization that reliably access $F^{-1}(f^*)$ for $n<M$ training samples.  

Motivated by recent results that demonstrate existence of lower dimensional ($<M$) invariant subspaces independent to training data and loss induced by the permutation symmetry of neural networks architecture\cite{simsek2021geometry,ziyin2023symmetry}, we realize that these architecture-induced invariant manifolds could serve as the key tool for the global tracing of gradient flow dynamics. For the convenience of study, we first provide a formal definition as follows. 

\begin{definition}[structural invariant manifold (SIM)]
Let $F(\vtheta)(\vx),\vtheta\in \sR^M, \vx\in \sR^d$
be an analytic parametric model. For a subset $\fM\subset \sR^M$, we say $\fM$ is a \textbf{structural invariant set} if it is invariant under $-\nabla_{\vtheta}L(\vtheta)$ in Eq.~\eqref{eq:gradient flow} for any real analytic loss function $\ell:\sR\times \sR \to \sR$ and dataset $S$. Moreover, if $\fM$ is an immersed submanifold of $\sR^M$, we say $\fM$ is a \textbf{structural invariant manifold.}\footnote{ By convention, the empty set is neither a structural invariant set nor a structural invariant manifold.}
  \label{def: structural invariant manifold}  
\end{definition}

The concept of SIM is introduced to capture the intrinsic dynamical consequence of a model's architecture, independent of any particular dataset or loss function. Example~\ref{example: exp network} illustrates a nontrivial SIM that arises in a simple nonlinear model. As we will show in the next section, this manifold emerges as a consequence of the model’s permutation symmetry.

\begin{example}[SIMs of two-neuron exponential neural network]
Consider a two-neuron neural network with exponential activation and a one-dimensional input \(x\). The model is given by:
\[
F(\vtheta)(x) = a_1e^{w_1 x} + a_2e^{w_2 x},
\]
where \(\vtheta = (a_1,w_1,a_2,w_2) \in \mathbb{R}^4,x\in \sR\). It is easy to verify that $\fM=\{(a_1,w_1,a_2,w_2)\in \sR^4\mid a_1=a_2,w_1=w_2\}$ is a SIM.
Later in Theorem~\ref{thm:final orbit} we will see that all SIMs of $F$ are $\sR^4$,
$\fM$ and $\sR^4\setminus\fM$.

\label{example: exp network}
\end{example}
The justification for the utility of lower dimensional SIMs in global trajectory tracing is as follows: \\ 
(i) \textbf{Enabling recovery under overparameterization:} a $d<M$ dimensional SIM enables us to study the gradient flow on this confined lower dimensional manifold.
If the target set $F^{-1}(f^*)$ intersecting with certain $d<M$ dimensional SIM, then target recovery by intuition is possible with $d\leq n < M$ training samples.\\
(ii) \textbf{Enabling strong complexity control:} a $d<M$ dimensional SIM makes it possible for the model to keep the complexity (marked by the effective degrees of freedom) $\leq d$ for an arbitrarily long time: (1) on the SIM the complexity is constrained for infinite time; (2) the closer some $\vtheta(t)$ is to the SIM, the longer afterwards the output complexity is upper bounded approximately by $d$.

SIMs emerge directly from a model's architecture, providing the key utilities for analyzing global dynamics described above. Therefore, this work makes an effort to establish a theoretical foundation—using geometric control theory—for the systematic identification of all SIMs in analytic parametric models with a focus on the neural network architecture.

\subsection{SIMs as orbit unions of \(\fF\)}
A central challenge in the study of SIMs for Eq.~\eqref{eq:gradient flow} lies in isolating the influence of model architecture from the confounding effects of training data and loss function. In this work, we propose a relaxation of the dynamics that resembles a geometric control problem, revealing a connection between the SIMs of the model \(F\) and the orbits of the induced vector fields \(\fF\), defined as follows.

\begin{definition}[induced vector fields]
 Let $F(\vtheta)(\vx)$ be an analytic parametric model with $\vtheta\in \sR^M$ and $\vx \in \sR^d$. Define the family of vector fields
 $$
\fF = \left\{ \nabla_{\vtheta} F(\cdot)(\vx) \mid \vx \in \mathbb{R}^d \right\}.
$$
$\fF$
is called the \textbf{induced vector fields of the model}. 

 \label{def:orbit of model's gradient}
\end{definition}

Our relaxation of the dynamics in Eq.~\eqref{eq:gradient flow} proceeds as follows. For each parameter vector \(\vtheta \in \mathbb{R}^M\), we observe that the gradient \(-\nabla_{\vtheta} L(\vtheta)\) lies within the span of the model's gradients, i.e.,  
\[
-\nabla_{\vtheta} L(\vtheta) \in \mathrm{span}\left(\{\nabla_{\vtheta} F(\vtheta)(\vx_i)\}_{i=1}^{n}\right) \subseteq \mathrm{span}\left(\{\nabla_{\vtheta} F(\vtheta)(\vx) \mid \vx \in \mathbb{R}^d\}\right).
\]
This observation implies that the gradient flow trajectories are encapsulated in the orbits of \(\{\nabla_{\vtheta} F(\cdot)(\vx_i)\}_{i=1}^{n}\), hence in the orbits of \(\fF\), which is determined solely by the model architecture.

With a detailed theoretical derivation below, we arrive at the first key result of our work in Theorem~\ref{def:sim}, which ensures that the orbits of $\fF$ and their unions give rise to all SIMs. This theorem serves as the foundation for all our later analysis as it translates the seemingly complicated task of identifying all SIMs into a clean one: computing the orbits of \(\fF\). Building on this result, we can further explore several key questions: What SIMs arise under different neural network architectures? How do these manifolds emerge?

The proof of Theorem~\ref{def:sim} relies on Lemma~\ref{lemma:sim tangent} as an auxiliary result.  We therefore begin by stating and proving Lemma~\ref{lemma:sim tangent}, and then proceed to the proof of Theorem~\ref{def:sim}.

\begin{lemma}[Problem 9-2 of Ref.~\refcite{Lee2013ISM}]
Let \( X \) be a smooth vector field on \( \mathbb{R}^M \), and consider the Cauchy problem
\begin{equation}
\frac{\rd\vtheta}{\rd t} = X(\vtheta), \quad \vtheta(0) = \vtheta_0,
\label{eq:cauchy problem}
\end{equation}
with solution denoted by \( \vtheta(t) \). Suppose \( \mathcal{M} \subset \mathbb{R}^M \) is an immersed submanifold such that \( X(\vtheta) \in T_{\vtheta}\mathcal{M} \) for all \( \vtheta \in \mathcal{M} \). Then for any initial condition \( \vtheta_0 \in \mathcal{M} \), there exists \( \delta > 0 \) such that \( \vtheta(t) \in \mathcal{M} \) for all \( |t| < \delta \). Moreover, if \( \mathcal{M} \) is closed in \( \mathbb{R}^M \), then \( \vtheta(t) \in \mathcal{M} \) for all \( t \) in the maximal interval of existence.
\label{lemma:sim tangent}
\end{lemma}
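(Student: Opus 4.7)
The plan is to reduce the statement to a standard ODE existence and uniqueness argument \emph{on the manifold $\mathcal{M}$ itself}, exploiting the tangency hypothesis $X(\vtheta) \in T_{\vtheta}\mathcal{M}$. Concretely, the idea is to define a restricted vector field $\widetilde{X}$ on $\mathcal{M}$ by $\widetilde{X}(\vtheta) = X(\vtheta)$ (viewed inside $T_\vtheta\mathcal{M}$ via the immersion), produce an integral curve of $\widetilde{X}$ in $\mathcal{M}$, and then use uniqueness of solutions in $\mathbb{R}^M$ to identify this curve with $\vtheta(t)$.

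For the local part, I would first verify that $\widetilde{X}$ is a well-defined smooth vector field on the immersed submanifold $\mathcal{M}$. Since the inclusion $\iota:\mathcal{M}\hookrightarrow\sR^M$ is a smooth immersion and $X\circ\iota:\mathcal{M}\to T\sR^M$ is smooth with image lying in $d\iota(T\mathcal{M})$, the assignment $\vtheta\mapsto (d\iota_\vtheta)^{-1}X(\iota(\vtheta))$ defines a smooth section of $T\mathcal{M}$. Applying the standard existence theorem for ODEs on smooth manifolds to $\widetilde{X}$ at $\vtheta_0\in\mathcal{M}$, I obtain a smooth curve $\gamma:(-\delta,\delta)\to\mathcal{M}$ with $\gamma(0)=\vtheta_0$ and $\gamma'(t)=\widetilde{X}(\gamma(t))$. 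Pushing forward by $\iota$, the curve $\iota\circ\gamma$ in $\sR^M$ satisfies $(\iota\circ\gamma)'(t) = X(\iota(\gamma(t)))$ with initial value $\vtheta_0$. By uniqueness of solutions to Eq.~\eqref{eq:cauchy problem} in $\sR^M$, this curve must coincide with $\vtheta(t)$ on $(-\delta,\delta)$, so $\vtheta(t)\in\mathcal{M}$ there.

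For the global part under the closedness hypothesis, let $I$ denote the maximal interval of existence of $\vtheta(t)$ in $\sR^M$ and consider the set $J = \{t\in I \mid \vtheta(t)\in\mathcal{M}\}$. It suffices to show $J=I$ by a standard connectedness argument: $J$ is nonempty since $0\in J$; $J$ is open in $I$ by the local statement applied at each $t\in J$ using $\vtheta(t)$ as a new initial condition; and $J$ is closed in $I$ because $\mathcal{M}$ is closed in $\sR^M$ and $\vtheta$ is continuous, so any sequence $t_n\in J$ with $t_n\to t_*\in I$ gives $\vtheta(t_*)=\lim \vtheta(t_n)\in\mathcal{M}$. Connectedness of $I$ then forces $J=I$.

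The main obstacle is the fact that $\mathcal{M}$ is only \emph{immersed}, not embedded, so its manifold topology may be strictly finer than the subspace topology from $\sR^M$. This matters in two places: first, in verifying that $\widetilde{X}$ is smooth on $\mathcal{M}$, where one must use that smoothness of $X\circ\iota$ combined with the injectivity of $d\iota$ yields a smooth section in the intrinsic topology of $\mathcal{M}$; second, in the closedness step of the global argument, where one needs $\mathcal{M}$ closed as a subset of $\sR^M$ (in the subspace topology) in order to pass limits—this is precisely the additional hypothesis invoked for the global conclusion and is what rules out integral curves escaping $\mathcal{M}$ only to return at a later time or limit point outside of it.
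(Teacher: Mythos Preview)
Your proposal is correct and follows essentially the same approach as the paper: both establish local invariance by constructing an integral curve of the restricted vector field on $\mathcal{M}$ and invoking uniqueness in $\mathbb{R}^M$, then deduce global invariance via an open--closed connectedness argument on the set of times for which $\vtheta(t)\in\mathcal{M}$. The only cosmetic difference is that the paper writes the local ODE explicitly in coordinates via a parameterization $\psi$ and the pseudo-inverse $(D\psi^\T D\psi)^{-1}D\psi^\T$, whereas you invoke the existence theorem on manifolds abstractly.
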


\begin{proof}
We provide its proof for completeness. We prove local invariance first, then extend to global invariance under the closedness assumption.\\
\textbf{Local invariance.} Let \( \vtheta_0 \in \mathcal{M} \), and let \( \vtheta(t) \) be the solution to Eq.~\eqref{eq:cauchy problem}. Pick any \( t_0\in \sR \) such that \( \vtheta(t_0) \in \mathcal{M} \). Since \( \mathcal{M} \) is an immersed submanifold, there exists a local parameterization \( \psi: U \subset \mathbb{R}^k \to \mathbb{R}^M \) such that \( \psi(U) \subset \mathcal{M} \), \( \vtheta(t_0) \in \psi(U) \), and \( D\psi(u) \) has full column rank for all \( u \in U \). Let \( \vu_0 := \psi^{-1}(\vtheta(t_0)) \in U \), and consider the ODE in \( \mathbb{R}^k \):
\[
\frac{\rd\vu}{\rd t} = (D\psi^\T D\psi)^{-1} D\psi^\T X(\psi(\vu(t))), \quad \vu(t_0) = \vu_0.
\]
This defines a smooth vector field in \( \mathbb{R}^k \), hence admits a unique solution \( \vu(t) \) near \( t_0 \). Define \( \tilde{\vtheta}(t) := \psi(\vu(t)) \). By the chain rule,
\[
\frac{\rd\tilde{\vtheta}}{\rd t} = D\psi(\vu(t)) \cdot \frac{\rd\vu}{\rd t} = D\psi (D\psi^\T D\psi)^{-1} D\psi^\T X(\psi(\vu(t))).
\]
Since \( X(\psi(\vu(t))) \in T_{\psi(\vu(t))} \mathcal{M} \), $X(\psi(\vu(t))$ is in the image of $D\psi$. Thus, \( \frac{\rd\tilde{\vtheta}}{\rd t} = X(\tilde{\vtheta}(t)) \). Therefore, \( \tilde{\vtheta}(t) \) satisfies the same ODE as \( \vtheta(t) \) and coincides with it at \( t = t_0 \). By uniqueness, \( \vtheta(t) = \tilde{\vtheta}(t) \in \mathcal{M} \) near \( t_0 \). Taking \( t_0 = 0 \), we obtain \( \delta > 0 \) such that \( \vtheta(t) \in \mathcal{M} \) for all \( |t| < \delta \).\\
\textbf{Global invariance (if \( \mathcal{M} \) is closed).} Pick arbitrary $T$ in the maximal interval of existence. Without loss of generality we assume $T>0$. Define $A=\{t\in [0,T]\mid \vtheta(t)\in \fM\}$. By the local invariance, $A$ is open in $[0,T]$.
Since \( \vtheta(t) \) is continuous and \( \mathcal{M} \) is closed, \( A \) is also closed in \( [0, T] \). Since $0\in A$, \( A = [0, T] \) by connectedness. Therefore $\vtheta(T)\in \fM$.
Since \( T \) was arbitrary within the maximal interval of existence, it follows that \( \vtheta(t) \in \mathcal{M} \) for all  \( t \) in the maximal interval of existence.
\end{proof}

\begin{theorem}[SIMs of $F$ are orbit unions of $\fF$]
 Let \( F(\vtheta)(\vx),\vtheta\in \sR^M, \vx\in \sR^d \) be an analytic parametric model. Let \( \fF = \left\{ \nabla_{\vtheta} F(\cdot)(\vx) \mid \vx \in \mathbb{R}^d \right\} \).  
Let \( \fM \neq \emptyset \) be a subset (or immersed submanifold) of $\sR^M$. Then $\fM$ is a structural invariant set (or SIM) if and only if \( \fM \) is invariant under every vector field in \( \fF \), equivalently, $\fM$ is union of orbits of $\fF$.

\label{def:sim}
\end{theorem}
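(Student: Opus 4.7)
The plan is to unfold the theorem's biconditional into three equivalent conditions: (a) $\fM$ is a structural invariant set; (b) $\fM$ is invariant under every vector field in $\fF$; (c) $\fM$ is a union of orbits of $\fF$. I would prove them via (a) $\Rightarrow$ (b), (b) $\iff$ (c), and (b) $\Rightarrow$ (a).

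The direction (a) $\Rightarrow$ (b) is almost immediate. For an arbitrary $X = \nabla_\vtheta F(\cdot)(\vx^*) \in \fF$, I realize $X$ as the negative gradient of an empirical loss by choosing the analytic loss $\ell(u,v) := -u$ and the single-point dataset $S = \{(\vx^*, 0)\}$, so that $\nabla \ell \equiv -1$ and Eq.~\eqref{eq:gradient flow} reduces identically to $-\nabla_\vtheta L(\vtheta) = \nabla_\vtheta F(\vtheta)(\vx^*) = X(\vtheta)$. Structural invariance of $\fM$ under this specific pair then amounts to invariance under $X$. The equivalence (b) $\iff$ (c) follows from the definition of orbits: $O_\fF(\vtheta)$ is the image of $\vtheta$ under the pseudogroup generated by $\{e^{tX} : t\in\sR, X\in\fF\}$, so every union of orbits is invariant under each $X \in \fF$; conversely, if $\fM$ is invariant under every $X \in \fF$ then (by Definition~\ref{def:invariant manifold}) it is invariant under every $e^{tX}$, and iterating compositions gives $O_\fF(\vtheta) \subseteq \fM$ for each $\vtheta \in \fM$.

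The substantive direction is (b) $\Rightarrow$ (a). Fix $\vtheta_0 \in \fM$, an analytic loss $\ell$, and a dataset $S = \{(\vx_i, y_i)\}_{i=1}^n$. By (c), the orbit $O := O_\fF(\vtheta_0)$ is contained in $\fM$. Theorem~\ref{thm:Hermann-Nagano} makes $O$ an analytic immersed submanifold whose tangent space at each $\vtheta \in O$ is $\mathrm{Lie}_\vtheta(\fF)$, which contains every $\nabla_\vtheta F(\vtheta)(\vx_i)$. Consequently $-\nabla_\vtheta L(\vtheta)$, being a pointwise real-linear combination of these gradients, lies in $T_\vtheta O$ at every $\vtheta \in O$. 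Lemma~\ref{lemma:sim tangent} then delivers local invariance of the gradient-flow trajectory within $O$.

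The main obstacle is upgrading this to invariance on the full maximal interval: orbits of $\fF$ are in general not closed subsets of $\sR^M$ (e.g., dense orbits of an irrational flow on a torus), so the closedness clause of Lemma~\ref{lemma:sim tangent} cannot be applied directly to $O$ or to $\fM$. I would handle this by treating the restriction $-\nabla_\vtheta L|_O$ as an analytic vector field on the analytic manifold $O$ endowed with its intrinsic immersed topology, solving the ODE intrinsically to produce a curve $\tilde\vtheta(t)\in O$, and invoking uniqueness of integral curves in $\sR^M$ to identify $\tilde\vtheta(t)$ with the ambient gradient-flow solution $\vtheta(t)$. A continuation argument showing that the intrinsic solution on $O$ cannot terminate before the ambient solution blows up then confines $\vtheta(t)$ to $O \subseteq \fM$ on the entire maximal interval, establishing (a).
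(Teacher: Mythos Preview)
Your proposal is correct and mirrors the paper's proof: the same three-way equivalence, the same choice $\ell(u,v)=-u$ with a one-point dataset for (a)$\Rightarrow$(b), and the same appeal to Hermann--Nagano plus Lemma~\ref{lemma:sim tangent} for the hard direction.

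The one place where your sketch is thin is the ``continuation argument.'' The standard escape lemma does not directly close it, because a set that is compact in $\sR^M$ need not be compact in the intrinsic immersed topology of $O$, so boundedness of the ambient trajectory on $[0,T^*]$ does not by itself prevent the intrinsic solution from terminating at $T^*$. The paper handles this (in its Lemma~\ref{lemma:orbit invariance}) by exploiting the orbit partition: at the putative first escape time $t_1$ the point $\vtheta(t_1)$ lies in \emph{some} orbit $O'$; local invariance from Lemma~\ref{lemma:sim tangent} applied at $\vtheta(t_1)$ pins the trajectory to $O'$ on an interval $[t_1-\delta_1,t_1+\delta_1]$; since $\vtheta(t_1-\delta_1)$ then belongs to both $O$ and $O'$, disjointness of orbits forces $O=O'$, contradicting the definition of $t_1$. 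This orbit-switching trick is exactly the missing ingredient your continuation step needs.
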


\begin{proof}
Let $\fM \subseteq \sR^M$. Consider the following statements:
\textbf{(i)} $\fM$ is a structural invariant set.
\textbf{(ii)} $\fM$ is invariant under every vector field in $\fF$.
\textbf{(iii)} $\fM$ is a union of orbits of $\fF$.
We will show that statements \textbf{(i)}\textbf{(ii)}\textbf{(iii)} are equivalent.

\medskip
\noindent
\textbf{(i) \(\Longrightarrow\) (ii)}:  
Assume \( \fM \) is invariant under the gradient flow \( -\nabla_{\vtheta} L(\vtheta) \) for any real analytic loss function and dataset.  
Let \( \vx \in \mathbb{R}^d \) be arbitrary. Consider the loss function \( \ell(s, t) = -s \) and the dataset \( S = \{(\vx, y)\} \) for some \( y \in \mathbb{R} \).  
Then  \( L(\vtheta) = -F(\vtheta)(\vx) \), and hence
$
-\nabla_{\vtheta} L(\vtheta) = \nabla_{\vtheta} F(\vtheta)(\vx).
$
So $\nabla_{\vtheta} L(\cdot)$ is a vector field in \( \fF \). Since \( \vx \in \mathbb{R}^d \) is arbitrary, \( \fM \) is invariant under every vector field in \( \fF \).

\medskip
\noindent
\textbf{(ii) \(\Longrightarrow\) (iii)}:  
Assume that \( \fM \subset \mathbb{R}^M \) is invariant under every vector field in \( \fF \). Since this invariance is preserved under the composition of flows, it follows that \( \fM \) is also invariant under every (local) diffeomorphism in the (pseudo) group generated by \( \fF \). Consequently, \( O_{\fF}(\vtheta)\subset \fM\) for any $\vtheta\in \fM$. Also,  
$\fM\neq \emptyset$. Therefore
 \( \fM \) is a union of orbits of \( \fF \).

\medskip
\noindent
\textbf{(iii) \(\Longrightarrow\) (i)}: We begin by presenting a lemma along with its proof.
\begin{lemma}
    Let $\fF$ be an arbitrary  family of  analytic vector fields on $\sR^M$. 
    Assume $X(\vtheta)$ is an analytic vector field that    satisfies the condition $X(\vtheta)\in \mathrm{Lie}_{\vtheta}(\fF),\forall \vtheta\in \sR^M$.  Then each orbit of $\fF$ is invariant under $X(\vtheta)$.
\label{lemma:orbit invariance}
\end{lemma}
\noindent\textbf{Proof for Lemma~\ref{lemma:orbit invariance}:}
Let $\fM$ be an orbit of $\fF$.
Fix any $\vtheta_0\in \fM$, and let $\vtheta(t),t\in I$ be the solution to the Cauchy problem $\frac{\rd \vtheta}{\rd t}=X(\vtheta),\vtheta(0)=\vtheta_0$, where $I$ is the maximal interval of existence.
We now prove that $\vtheta(t)\in \fM,\forall t\in I$.
Suppose for  contradiction that $\{t\in [0,+\infty)\cap I\mid \vtheta(t)\notin \fM\} \neq \emptyset$. Let $t_1=\inf \{t\in [0,+\infty)\cap I\mid \vtheta(t)\notin \fM\}$.
By Theorem~\ref{thm:Hermann-Nagano}, $\fM$ is an immersed submanifold, and its tangent space is  given by $T_{\vtheta}\fM=\mathrm{Lie}_{\vtheta}(\fF)$. Since $X(\vtheta)\in \mathrm{Lie}_{\vtheta}(\fF)$ for any $\vtheta\in \sR^M$, we have $X(\vtheta)\in T_{\vtheta}\fM,\forall \vtheta\in \fM $. 
 By Lemma~\ref{lemma:sim tangent}, there exists \( \delta > 0 \) such that \( \vtheta(t) \in \fM \) for all \( t \in [0, \delta) \), which implies \( t_1 > 0 \).

Applying Lemma~\ref{lemma:sim tangent} again at $\vtheta(t_1)$, we obtain a $\delta_1\in(0,t_1)$ such that
\( \vtheta(t) \in O_{\mathcal{F}}(\vtheta(t_1)), \forall t\in [t_1-\delta_1,t_1+\delta_1]\).
Since $\vtheta(t_1-\delta_1)\in \fM\cap O_{\fF}(\vtheta(t_1))$, 
$\fM=O_{\fF}(\vtheta(t_1))$. Therefore for all $0<t\leq t_1+\delta_1$, $\vtheta(t)\in\fM $, which contradicts that $t_1$ is the infimum.
Hence, the set \( \{t\in [0,+\infty)\cap I\mid \vtheta(t)\notin \fM\} \) is empty. Similarly, we have \( \{ t \in (-\infty,0]\cap I \mid \vtheta(t) \notin \fM \} = \emptyset \).  
So \( \vtheta(t)\in \fM \)  for all $t\in I$. Therefore $\fM$ is invariant under $X(\vtheta)$.
\hfill $\square$

We now return to the proof of \textbf{(iii) $\Longrightarrow$ (i)}.
Assume that \( \fM \) is a union of orbits of $\fF = \left\{ \nabla_{\vtheta} F(\cdot)(\vx) \mid \vx \in \mathbb{R}^d \right\}$. Consider an arbitrary dataset $S$ and loss function $\ell$. For any $\vtheta\in \sR^M$, the vector $-\nabla_{\vtheta}L(\vtheta)$ in Eq.~\eqref{eq:gradient flow} is a linear combination of vectors in $\fF|_{\vtheta}$, where  $\fF|_{\vtheta}$ is the evaluation of $\fF$ at $\vtheta$. Thus, for any $\vtheta\in \sR^M$, we have $-\nabla_{\vtheta}L(\vtheta)\in \mathrm{span} (\fF|_{\vtheta})\subset\mathrm{Lie}_{\vtheta}(\fF)$. By Lemma~\ref{lemma:orbit invariance}, each orbit of $\fF$ is invariant under the vector field $-\nabla_{\vtheta}L(\cdot)$. Hence, each orbit of $\fF$ is a SIM. Since \(\fM\) is a union of orbits of \(\fF\), it follows readily from the definition that \(\fM\) is a SIM.

Thus the three statements are equivalent. Furthermore, if $\fM$ is assumed to be an immersed submanifold of $\sR^M$, statement \textbf{(i)} may be replaced by the assertion that $\fM$ is a SIM.

\end{proof}

In defining SIM, we require it to be invariant to the gradient flow under any loss function \(\ell\) and dataset \(S\). However, Proposition~\ref{prop:data-independent} shows that, under mild assumptions on the loss function \(\ell_0\), an immersed submanifold is a SIM if and only if it is invariant to the gradient flow under this loss function $\ell_0$ and  any dataset 
$S$. Intuitively, data-independent invariance is strong enough to induce structural invariance.

\begin{proposition}
Let \(F(\vtheta)( \vx)\) be an analytic parametric model with \(\vtheta \in \sR^M\), and let \(\fM\) be an immersed submanifold of \(\sR^M\).  
Suppose the loss function \(\ell_0(s,t)\) is real analytic and satisfies:  
\(\forall s \in \sR, \exists t \in \sR\) such that \(\nabla \ell_0(s,t) \ne 0\). 
Then $\fM$ is a SIM if and only if $\fM$ is invariant under  $-\nabla_{\vtheta}L(\vtheta)$ in Eq.~\eqref{eq:gradient flow} for this  loss function $\ell_0$ and any dataset $S$.  
\label{prop:data-independent}
\end{proposition}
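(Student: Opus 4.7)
The ``only if'' direction is immediate from Definition~\ref{def: structural invariant manifold}, since a SIM is by definition invariant under the gradient flow for every analytic loss and dataset, in particular for $\ell_0$ with every $S$. For the ``if'' direction, my plan is to invoke Theorem~\ref{def:sim} and thereby reduce the task to showing that $\fM$ is invariant under every vector field in $\fF=\{\nabla_{\vtheta}F(\cdot)(\vx)\mid \vx\in\sR^d\}$. I would proceed in two stages: first establish pointwise tangency of every $X\in\fF$ to $\fM$, then upgrade this to global invariance along each maximal $X$-integral curve.

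For the tangency stage, fix any $\vtheta_0\in\fM$ and $\vx\in\sR^d$, set $s_0=F(\vtheta_0)(\vx)$, and use the hypothesis on $\ell_0$ to pick $y_0\in\sR$ with $\nabla\ell_0(s_0,y_0)\neq 0$. For the single-point dataset $S=\{(\vx,y_0)\}$, Eq.~\eqref{eq:gradient flow} gives the vector field $Y(\vtheta)=-\nabla\ell_0(F(\vtheta)(\vx),y_0)\,\nabla_{\vtheta}F(\vtheta)(\vx)$, whose value at $\vtheta_0$ is a nonzero scalar multiple of $\nabla_{\vtheta}F(\vtheta_0)(\vx)$. The assumed invariance of $\fM$ under $Y$ forces $Y(\vtheta_0)\in T_{\vtheta_0}\fM$ (an invariant vector field on an injectively-immersed submanifold is tangent to it, as one sees by lifting the trajectory through the immersion near $\vtheta_0$), so $\nabla_{\vtheta}F(\vtheta_0)(\vx)\in T_{\vtheta_0}\fM$.

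For the global-invariance stage, fix $\vtheta_0\in\fM$ and $X=\nabla_{\vtheta}F(\cdot)(\vx)\in\fF$, and let $\gamma\colon I\to\sR^M$ be the maximal $X$-integral curve with $\gamma(0)=\vtheta_0$. Define $A=\{t\in I\mid \gamma(t)\in\fM\}$; it contains $0$, and pointwise tangency combined with Lemma~\ref{lemma:sim tangent} shows $A$ is open in $I$. To obtain $A=I$ by connectedness, I would show $A$ is also closed in $I$. Given $t_n\in A$ with $t_n\to t^*\in I$, I pick $y^*\in\sR$ with $\nabla\ell_0(F(\gamma(t^*))(\vx),y^*)\neq 0$; then $h(t):=-\nabla\ell_0(F(\gamma(t))(\vx),y^*)$ is analytic and nonvanishing on an open neighborhood $U\ni t^*$. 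For $n$ large, $t_n\in A\cap U$; the change of variable $s(t)=\int_{t_n}^{t}h(\tau)^{-1}\,\rd\tau$ is a diffeomorphism from $U$ onto an open interval of $s$-values, under which $\gamma|_U$ becomes an integral curve of $Y^{*}(\vtheta):=-\nabla\ell_0(F(\vtheta)(\vx),y^*)\,X(\vtheta)$ starting at $\gamma(t_n)\in\fM$, with $\gamma(t^*)$ attained at the finite parameter $s(t^*)$. By the assumed global invariance of $\fM$ under $Y^{*}$ and uniqueness of ODE solutions, the entire $Y^{*}$-trajectory from $\gamma(t_n)$ remains in $\fM$ up to $s=s(t^*)$, so $\gamma(t^*)\in\fM$ and $t^*\in A$.

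The principal obstacle is exactly this closedness step: pointwise tangency alone yields only local invariance (Lemma~\ref{lemma:sim tangent} promotes it to global invariance only under closedness of $\fM$ in $\sR^M$, which is not assumed). The reparameterization trick overcomes this by combining the analytic nonvanishing of $h$ near any accumulation point of $A$ with the freedom, granted by the hypothesis on $\ell_0$, to adapt $y^*$ (and hence the auxiliary field $Y^{*}$) to the target point $\gamma(t^*)$. Once global invariance of $\fM$ under every $X\in\fF$ is established, Theorem~\ref{def:sim} immediately yields that $\fM$ is a SIM.
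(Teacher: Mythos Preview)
Your approach is correct and takes a genuinely different route from the paper. The paper introduces the auxiliary family $\fF'=\{-\nabla\ell_0(F(\cdot)(\vx_0),y)\,X\mid y\in\sR\}$ and observes that the hypothesis on $\ell_0$ forces $X(\vtheta)\in\mathrm{span}(\fF'|_{\vtheta})\subset\mathrm{Lie}_{\vtheta}(\fF')$ at every $\vtheta$; since $\fM$ is by assumption invariant under every member of $\fF'$, it is a union of $\fF'$-orbits, and Lemma~\ref{lemma:orbit invariance} (which rests on the Hermann--Nagano Theorem) then shows each such orbit, hence $\fM$, is $X$-invariant. Your argument instead works directly on a single $X$-trajectory, using time-reparameterization to identify pieces of it with trajectories of the admissible fields $Y^{*}$ and thereby transfer the assumed global invariance. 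This is more elementary---it sidesteps the orbit machinery and Hermann--Nagano entirely---while the paper's proof fits more naturally into its geometric-control framework and reuses a lemma already established inside Theorem~\ref{def:sim}.

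One caution: your tangency step (``an invariant vector field on an injectively-immersed submanifold is tangent to it, as one sees by lifting the trajectory'') is delicate when $\fM$ is merely immersed rather than embedded, since a flow line lying in $\fM$ set-theoretically need not lift continuously through the immersion near $\vtheta_0$. You do not actually need this step, however: the very same reparameterization trick you use for closedness of $A$ also gives openness directly---at any $t_0\in A$, choose $y_0$ with $\nabla\ell_0(F(\gamma(t_0))(\vx),y_0)\neq 0$, reparameterize on a neighborhood of $t_0$, and invoke the assumed invariance under the resulting $Y_0$. With that substitution the tangency detour (and Lemma~\ref{lemma:sim tangent}) can be dropped and your argument is complete.
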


\begin{proof}
By definition of SIM, one direction is trivial. To prove the other direction, 
assume that $\fM$ is invariant under $-\nabla_{\vtheta}L(\vtheta)$ in Eq.~\eqref{eq:gradient flow} for any dataset $S$ and the  loss function $\ell_0$.

Let $\vx_0\in \sR^d$ be arbitrary, and let $S_0=\{(\vx_0,y)\}$ for some $y\in \sR$.  Denote $X(\vtheta)=\nabla_{\vtheta} F(\vtheta)(\vx_0)$.
 Then under the dataset $S_0$ and the loss function $\ell_0$, we have $-\nabla_{\vtheta}L(\vtheta)=-\nabla l_0(F(\vtheta)(\vx_0),y)X(\vtheta)$. By our initial assumption, $\fM$ is invariant under $-\nabla l_0(F(\vtheta)(\vx_0),y)X(\vtheta)$ for any $y\in \sR$. Define $\fF'=\{-\nabla l_0(F(\vtheta)(\vx_0),y)X(\vtheta)\mid y\in \sR\}$. Since $\fM$ is invariant under any vector field in $\fF'$, it follows that  $\fM$  is invariant under compositions of flows generated by vector fields in  $\fF'$. So $O_{\fF'}(\vtheta)\subset \fM,\forall\vtheta\in \fM$. Now, fix any $\vtheta_0\in \fM$.  Then $O_{\fF'}(\vtheta_0)\subset \fM$. Let $\vtheta(t),t\in I$ denote the solution of the Cauchy problem $\frac{\rd \vtheta}{\rd t}=X(\vtheta),\vtheta(0)=\vtheta_0$, where $I$ is the maximal interval of existence. We now prove that $\vtheta(t)\in \fM,\forall t\in I$. Since $O_{\fF'}(\vtheta_0)\subset \fM$, it is sufficient to prove that $\vtheta(t)\in O_{\fF'}(\vtheta_0),\forall t\in I$. By assumption of $\ell_0$, for any $\vtheta\in \sR^M$,
there exists $y_0\in \sR$ such that $\nabla \ell(F(\vtheta)(\vx_0),y_0))\neq 0$. Therefore, for any $\vtheta\in \sR^M $, we have $X(\vtheta)\in \fF'|_{\vtheta}\subset \mathrm{Lie}_{\vtheta}(\fF')$. Applying Lemma~\ref{lemma:orbit invariance}, it follows that $O_{\fF'}(\vtheta_0)$ is invariant under $X(\vtheta)$. So $\vtheta(t)\in O_{\fF'}(\vtheta_0)
\subset \fM,\forall t\in I $. Thus, $\fM$ is invariant under $X(\vtheta)$. Since our choice of $\vx_0$ is arbitrary, $\fM$ is invariant under any vector field in $\fF = \left\{ \nabla_{\vtheta} F(\cdot)(\vx) \mid \vx \in \mathbb{R}^d \right\}$. It follows from Theorem~\ref{def:sim} that $\fM$ is a SIM.
\end{proof}

 Structural invariant sets are closed under set operations, as shown in Proposition~\ref{lemma:closed under set operations}.

\begin{proposition}[structural invariant sets are closed under set operations]
Let \( F(\vtheta)(\vx) \) be an analytic parametric model with parameter \( \vtheta \in \mathbb{R}^M \), and let \( \fE \) denote the collection of all structural invariant sets of the model $F$, augmented by the empty set. Then the following properties hold:
\begin{enumerate}

\item If \( \fM \in \fE \), then its complement \( \mathbb{R}^M \setminus \fM \in \fE \).
\item If \( \{\fM_i\}_{i \in I} \subseteq \fE \) is any  collection of structural invariant sets indexed by \( I \), then both the intersection  \( \bigcap_{i \in I} \fM_I \) and the union \( \bigcup_{i \in I} \fM_i \)  belong to \( \fE \).
\end{enumerate}
\label{lemma:closed under set operations}
\end{proposition}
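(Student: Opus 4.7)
The plan is to reduce everything to Theorem~\ref{def:sim}, which characterizes the structural invariant sets of $F$ as precisely the nonempty unions of orbits of $\fF$. Once this translation is in place, the proposition becomes a statement about how unions of blocks of a partition behave under the standard set operations, since orbits of a family of analytic vector fields partition the ambient space $\sR^M$ (any two orbits are either equal or disjoint, and every point lies in its own orbit). I would open the proof by recalling these two facts and adopting the convention, consistent with the augmentation of $\fE$ by $\emptyset$, that the empty union of orbits is the empty set, so that elements of $\fE$ are exactly arbitrary (possibly empty) unions of orbits.

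Next, I would let $\{O_\alpha\}_{\alpha \in A}$ denote the collection of all orbits of $\fF$, so that each $\fM \in \fE$ can be uniquely written as $\fM = \bigcup_{\alpha \in J(\fM)} O_\alpha$ for some $J(\fM) \subseteq A$, with $J(\emptyset) = \emptyset$. The three claims then follow by manipulating the index sets. For the complement, observe that $\sR^M \setminus \fM = \bigcup_{\alpha \in A \setminus J(\fM)} O_\alpha$, which is again a union of orbits and hence in $\fE$; this is where the partition property is essential, as it guarantees that removing a union of blocks leaves a union of the remaining blocks. For an arbitrary union, $\bigcup_{i \in I} \fM_i = \bigcup_{\alpha \in \bigcup_{i} J(\fM_i)} O_\alpha$, which is trivially a union of orbits. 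For an arbitrary intersection, I would argue that $\bigcap_{i \in I} \fM_i = \bigcup_{\alpha \in \bigcap_{i} J(\fM_i)} O_\alpha$: the nontrivial inclusion $\subseteq$ uses that a point $\vtheta$ lying in every $\fM_i$ has its orbit $O_{\fF}(\vtheta)$ contained in every $\fM_i$ (since each $\fM_i$ is a union of orbits), so $\vtheta$'s orbit sits inside the intersection.

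Finally, I would note that the intersection might end up empty — for instance if two orbits of $\fF$ are taken as the $\fM_i$'s — which is precisely why $\fE$ was augmented by $\emptyset$ in the statement; this also handles degenerate empty unions of orbits in the complement case. The main (and only mild) obstacle is keeping the empty set bookkeeping clean and justifying the $\subseteq$ direction of the intersection formula, but both reduce directly to the defining property of a partition once Theorem~\ref{def:sim} is applied. No additional machinery from geometric control theory is needed beyond the orbit-union characterization already established.
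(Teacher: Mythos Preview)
Your proposal is correct and follows essentially the same approach as the paper: both reduce to Theorem~\ref{def:sim}, write $\sR^M$ as the disjoint union of orbits $\{O_j\}_{j\in J}$, identify elements of $\fE$ with (possibly empty) index subsets $J'\subseteq J$, and then verify complement, union, and intersection by the corresponding operations on index sets. Your treatment is slightly more explicit about the nontrivial inclusion in the intersection formula and the empty-set bookkeeping, but the argument is the same.
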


\begin{proof}
   By definition, $\sR^M=\bigcup_{j\in J} O_j$, where $J$ is an index set,   $O_j,j\in J$ are all orbits of $\fF = \{\nabla_{\vtheta}F(\cdot)(\vx)\mid \vx\in \sR^d\}$. Besides, $O_s\cap O_t=\emptyset$ if $s,t\in J$ and $s\neq t$. 
   By Theorem~\ref{def:sim}, elements of $\fE$ is of the form $\fM=\bigcup_{j\in J'} O_j$, where $J'\subset J$ is an arbitrary index set ($J'$ may be empty). 
\\   
\textbf{(i) Complement:} Let $\fM=\bigcup_{j\in J'} O_j$ be an arbitrary structural invariant set, where $J'\subset J$ is an index set. Then
$\sR^M\setminus\fM=\bigcup_{j\in J\setminus J'}O_j$. Therefore $\sR^M\setminus\fM \in \fE$.
\\
\textbf{(ii) Union and Intersection:} Let $\fM_i=\bigcup_{j\in J_i} O_j$ be arbitrary structural invariant sets for  $i\in I$, where $I$ is an arbitrary index set, and $J_i\subset J$ for all $i\in I$.
Then it is straightforward to verify $\bigcup_{i\in I} \fM_i=\bigcup_{j\in\cup_{i\in I} J_i} O_j$, and $\bigcap_{i\in I} \fM_i=\bigcup_{j\in\cap_{i\in I} J_i} O_j$.
Therefore both $\bigcup_{i\in I} \fM_i$ and $\bigcap_{i\in I} \fM_i$ are in $\fE$.
\end{proof}

As demonstrated in Proposition~\ref{example:linear}, linear models possess only trivial SIM $\sR^M$, highlighting that the presence of nontrivial SIMs is a distinctive characteristic of nonlinear systems.
\begin{proposition}[linear model has only trivial SIM]
Let
$\{\psi_1( \vx),\ldots,$ $\psi_M( \vx)\}$   be a set of linearly independent analytic functions defined on $\sR^d$. 
Consider the linear model \(F(\vtheta)(\vx) = \sum_{i=1}^M \theta_i \psi_i( \vx)\), where \(\vtheta=(\theta_1,\ldots,\theta_M) \in \mathbb{R}^M,\vx\in \sR^d\). Then $F$ has only the trivial SIM,  $\sR^M$.
\label{example:linear}
\end{proposition}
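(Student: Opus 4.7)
The plan is to reduce the claim to a direct application of Corollary~\ref{theorem: page 44 book} through Theorem~\ref{def:sim}. First I would compute the induced vector field family: since $F(\vtheta)(\vx) = \sum_{i=1}^M \theta_i \psi_i(\vx)$ is linear in $\vtheta$, we have $\nabla_{\vtheta} F(\vtheta)(\vx) = \vpsi(\vx) := (\psi_1(\vx),\ldots,\psi_M(\vx))^{\T}$, which is a \emph{constant} vector field on $\sR^M$ (independent of $\vtheta$). Thus $\fF = \{\vpsi(\vx) \mid \vx \in \sR^d\}$ is a family of constant vector fields indexed by the input $\vx$.

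Next I would observe that the set of evaluation vectors $\{\vpsi(\vx) \mid \vx \in \sR^d\}$ spans all of $\sR^M$. Indeed, if this set were contained in a proper subspace, there would exist a nonzero $\vc = (c_1,\ldots,c_M) \in \sR^M$ with $\vc^{\T}\vpsi(\vx) = \sum_{i=1}^M c_i \psi_i(\vx) = 0$ for every $\vx \in \sR^d$, contradicting the linear independence of $\{\psi_1,\ldots,\psi_M\}$. Since $\fF$ already contains $M$ linearly independent vectors at every point, we have $\mathrm{span}(\fF|_{\vtheta}) = \sR^M$, and therefore $\mathrm{Lie}_{\vtheta}(\fF) = T_{\vtheta}\sR^M$ for every $\vtheta \in \sR^M$.

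To finish, I would invoke Corollary~\ref{theorem: page 44 book}: because $\sR^M$ is connected and the Lie closure of $\fF$ has full rank everywhere, there is a single orbit of $\fF$, namely $\sR^M$ itself. By Theorem~\ref{def:sim}, every SIM is a nonempty union of orbits of $\fF$, so the only SIM is the trivial one $\sR^M$. There is no serious obstacle in the proof; the only point worth stating carefully is the translation from linear independence of the functions $\psi_i$ (a statement about $\sR^d$) into spanning of their pointwise evaluation vectors (a statement in $\sR^M$), which is the standard duality argument above.
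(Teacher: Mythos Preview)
Your proposal is correct and follows essentially the same route as the paper's proof: compute $\fF$, use the linear independence of the $\psi_i$ via the orthogonality/duality argument to show $\mathrm{span}(\fF|_{\vtheta})=\sR^M$ at every point, conclude $\mathrm{Lie}_{\vtheta}(\fF)=\sR^M$, and then apply Corollary~\ref{theorem: page 44 book} together with Theorem~\ref{def:sim}. The only extra remark you make---that each $\nabla_{\vtheta}F(\cdot)(\vx)$ is a constant vector field---is a helpful clarification but not an essential departure from the paper's argument.
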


\begin{proof}
By Theorem~\ref{def:sim}, a SIM is a union of orbits of the family \( \fF = \{ (\psi_1(\vx), \ldots, \psi_M(\vx)) \mid \vx \in \sR^d \} \). Therefore, it suffices to show that \( \fF \) has a single orbit equal to \( \sR^M \).

 Fix any $\vtheta_0\in \sR^M$. Let $\fF|_{\vtheta_0}$ denote the evaluation of $\fF$ at some $\vtheta_0$. 
Suppose, for contradiction that  \( \mathrm{span}(\fF|_{\vtheta_0}) \subsetneq \sR^M \). Then there exists a nonzero vector \( \vc = (c_1, \ldots, c_M) \in \sR^M \) such that \( \vc \) is orthogonal to \( \mathrm{span}(\fF|_{\vtheta_0}) \), i.e., \( \sum_{i=1}^M c_i \psi_i(\vx) = 0,\forall \vx\in \sR^d \). This contradicts the assumption that \( \{ \psi_1(\vx), \ldots, \psi_M(\vx) \} \) is a linearly independent set of functions. Therefore, \( \mathrm{span}(\fF|_{\vtheta}) = \sR^M \) for all \( \vtheta \in \sR^M \).
Since \( \mathrm{span}(\fF|_{\vtheta}) \subset \mathrm{Lie}_{\vtheta}(\fF) \subset \sR^M \), it follows that \( \mathrm{Lie}_{\vtheta}(\fF) = \sR^M \) for all \( \vtheta \in \sR^M \). By Corollary~\ref{theorem: page 44 book}, this implies that \( \fF \) has a single orbit equal to \( \sR^M \).
\end{proof}

\section{Symmetry and Symmetry-Induced SIM}
\label{sec:symmetry}

\begin{figure}[h]
    \centering
    \includegraphics[width=0.95\linewidth]{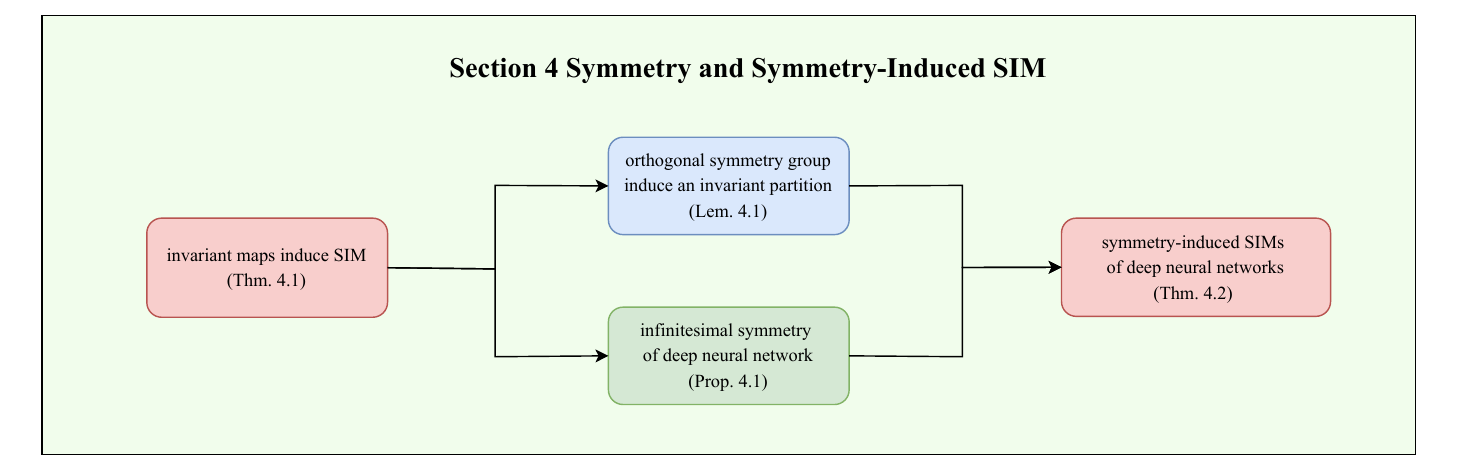}
    \caption{Flowchart illustrating the logical structure of Section~\ref{sec:symmetry}.}
    \label{fig:sec_4}
\end{figure}

In this section, we explore how model symmetries can give rise to SIMs. The logical structure of the section is illustrated in Figure~\ref{fig:sec_4}. We begin by establishing a general theoretical result: invariant maps are sufficient to generate SIMs (Theorem~\ref{cor:symmetries induced sim}). Building on this foundation, we apply the framework to deep neural networks, leveraging Proposition~\ref{prop:infinitesimal symmetry of neural network} and Lemma~\ref{lemma: invariant partition} to identify the relevant symmetries. These developments culminate in Theorem~\ref{thm:all symmetry of neural network}, which provides an explicit construction of symmetry-induced SIMs in deep neural networks.

\subsection{Symmetry-induced SIM}

We begin by examining how invariant maps give rise to SIMs. In Definition~\ref{def:invariant maps}, we formally define two types of invariant maps: infinitesimal invariant maps and global invariant maps, which we collectively refer to as \textbf{invariant maps}. It is worth noting that every global invariant map is also an infinitesimal invariant map, but not vice versa.

The notion of infinitesimal invariant maps is introduced for the following reasons. First, the symmetries that give rise to SIMs often do not require global invariance; instead, it is sufficient for the invariance to hold in a local neighborhood of the manifold, or even merely at the level of tangent. This motivates the generalization from global to infinitesimal invariance. Second, in the context of neural networks, one  encounters invariant maps that are globally defined but possess only tangent invariance (Proposition~\ref{prop:infinitesimal symmetry of neural network}). Such maps are still capable of inducing SIMs
despite exhibiting only this weaker form of invariance.

\begin{definition}[infinitesimal and global  invariant map]
Let $F(\vtheta)(\vx)$ be an analytic parametric model with $\vtheta\in \sR^M$ and $\vx\in \sR^d$. For an analytic map $g:\sR^M\to \sR^M$, we say $g$ is an \textbf{infinitesimal invariant map} if $\fM:=\{\vtheta'\mid g(\vtheta')=\vtheta'\}\neq \emptyset$, and
for any $\vtheta\in \fM$, any $\vx\in \sR^d$, $D_{\vtheta}\left( F(g(\vtheta))(\vx)\right)=D_{\vtheta}\left(F(\vtheta)(\vx)\right)$. Here $D_{\vtheta}$ denotes the Jacobian matrix.
Moreover, if $F(g(\vtheta))(\vx)=F(\vtheta)(\vx),\forall \vtheta\in \sR^M,\vx\in \sR^d$, we say $g$ is a \textbf{global invariant map}.  
\label{def:invariant maps}    
\end{definition}

 Theorem~\ref{cor:symmetries induced sim} provides a set of general conditions under which the fixed-point set of  invariant maps forms a SIM. In contrast, Example~\ref{example:not sim} shows that invariant maps---even globally invariant ones---do not necessarily induce SIMs without the other conditions. 
Theorem~\ref{cor:symmetries induced sim} subsumes prior results such as the \(O\)-mirror symmetry in Ref.~\refcite{ziyin2023symmetry} and the symmetric loss in Ref.~\refcite{simsek2021geometry}, up to a subtle distinction: those works consider symmetries of the empirical loss function \(L(\vtheta)\), whereas we focus on symmetries of the parametric model. Ignoring this difference, our result can be viewed as a generalization of these earlier cases. Moreover, Theorem~\ref{cor:symmetries induced sim} applies not only to linear but also to nonlinear invariant maps, as illustrated in Example~\ref{example:nonlinear symmetry}.

In addition to the symmetries considered in Theorem~\ref{cor:symmetries induced sim}, continuous symmetries, as discussed in Ref.~\refcite{ziyin2024parameter}, represent another class capable of inducing  SIMs. These continuous symmetries typically manifest in homogeneous networks and matrix factorization models. However, the scope of this paper is intentionally focused on the discrete symmetries detailed in Theorem $\text{\ref{cor:symmetries induced sim}}$ as they are generally shared by all neural networks.

\begin{theorem}[invariant maps induced SIM]
Let \( F(\vtheta)(\vx)\) be an analytic parametric model with $\vtheta\in \sR^M$ and $\vx\in \sR^d$. 
Let $\{g_i\}_{i\in I}$ be family of invariant maps of $F$. Define $\fM=\{\vtheta\mid g_i(\vtheta)=\vtheta,\forall i\in I\}$. Assume $\fM$ is an immersed submanifold of $\sR^M$ with its tangent space satisfying $T_{\vtheta}\fM=\bigcap_{i \in I} \ker(Dg_i^\T(\vtheta) - \mathrm{id}_M),\forall \vtheta\in \fM$.
 Then $\fM$ is a SIM.\footnote{$Dg_i$ is the jacobian matrix of $g_i$, and $Dg_i^\T$ is its transpose. $\mathrm{id}_M$ is the $M\times M$ identity matrix.}

\label{cor:symmetries induced sim}
\end{theorem}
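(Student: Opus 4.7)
The plan is to combine Theorem~\ref{def:sim} with Lemma~\ref{lemma:sim tangent}. By Theorem~\ref{def:sim}, $\fM$ is a SIM if and only if $\fM$ is invariant under every vector field in $\fF=\{\nabla_{\vtheta}F(\cdot)(\vx)\mid \vx\in\sR^d\}$. So it suffices to show (a) tangency: for every $\vtheta\in\fM$ and every $\vx\in\sR^d$, one has $\nabla_{\vtheta}F(\vtheta)(\vx)\in T_{\vtheta}\fM$, and (b) that $\fM$ is closed in $\sR^M$, so that Lemma~\ref{lemma:sim tangent} promotes local invariance to invariance on the full maximal interval of existence.

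For (a), fix $i\in I$, $\vtheta\in\fM$, and $\vx\in\sR^d$. The definition of an infinitesimal invariant map gives $D_{\vtheta}\bigl(F(g_i(\vtheta))(\vx)\bigr)=D_{\vtheta}\bigl(F(\vtheta)(\vx)\bigr)$. Applying the chain rule to the left-hand side and using $g_i(\vtheta)=\vtheta$ yields $D_{\vtheta}F(\vtheta)(\vx)\cdot Dg_i(\vtheta)=D_{\vtheta}F(\vtheta)(\vx)$. Taking transposes, $Dg_i^{\T}(\vtheta)\,\nabla_{\vtheta}F(\vtheta)(\vx)=\nabla_{\vtheta}F(\vtheta)(\vx)$, i.e., $\nabla_{\vtheta}F(\vtheta)(\vx)\in\ker(Dg_i^{\T}(\vtheta)-\mathrm{id}_M)$. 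Since this holds for every $i\in I$, intersecting over $i$ and invoking the assumed identification $T_{\vtheta}\fM=\bigcap_{i\in I}\ker(Dg_i^{\T}(\vtheta)-\mathrm{id}_M)$ gives $\nabla_{\vtheta}F(\vtheta)(\vx)\in T_{\vtheta}\fM$, as required.

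For (b), each $g_i$ is analytic, hence continuous, so the fixed-point set $\{\vtheta\mid g_i(\vtheta)=\vtheta\}$ is closed in $\sR^M$. Therefore $\fM$, being the intersection of these closed sets, is closed in $\sR^M$. Combining (a) with Lemma~\ref{lemma:sim tangent} applied to each analytic vector field $\nabla_{\vtheta}F(\cdot)(\vx)$, we conclude that every trajectory starting in $\fM$ remains in $\fM$ throughout its maximal interval of existence. Thus $\fM$ is invariant under every vector field in $\fF$, and Theorem~\ref{def:sim} closes the argument.

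The only real subtlety, and hence the main point to be careful about, is the transpose in the tangent-space condition: the chain-rule identity naturally produces a right action of $Dg_i(\vtheta)$ on the row vector $D_{\vtheta}F(\vtheta)(\vx)$, and one must transpose to match the stated kernel of $Dg_i^{\T}(\vtheta)-\mathrm{id}_M$. Once this bookkeeping is in place, the result follows directly from the infinitesimal (not necessarily global) invariance, which is crucial: the argument never needs $F\circ g_i=F$ on the ambient space, only the equality of Jacobians at fixed points of $g_i$. This is precisely what makes Definition~\ref{def:invariant maps} strong enough to cover the neural network applications later in Section~\ref{sec:symmetry}.
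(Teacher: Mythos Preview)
Your proof is correct and follows essentially the same approach as the paper: reduce via Theorem~\ref{def:sim} to invariance under each $\nabla_{\vtheta}F(\cdot)(\vx)$, verify tangency by the chain rule applied to the infinitesimal invariance identity (yielding $Dg_i^{\T}(\vtheta)\nabla_{\vtheta}F(\vtheta)(\vx)=\nabla_{\vtheta}F(\vtheta)(\vx)$ on $\fM$), note that $\fM$ is closed as an intersection of fixed-point sets, and invoke Lemma~\ref{lemma:sim tangent}. Your commentary on the transpose bookkeeping and on needing only infinitesimal (not global) invariance matches the paper's emphasis as well.
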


\begin{proof}
By Theorem~\ref{def:sim},
it suffices to prove that for any \( \vx_0\in \sR^d,\vtheta_0 \in \fM \), the solution \( \vtheta(t) \) to the Cauchy problem \( \frac{\rd\vtheta}{\rd t} = \nabla_{\vtheta} F(\vtheta(t))(\vx_0), \vtheta(0) = \vtheta_0  \) remains in \( \fM \) for all \( t \) in its maximal interval of existence. Fix any $\vx_0\in \sR^d$, and define the vector field \( X(\vtheta) := \nabla_{\vtheta} F(\vtheta)(\vx_0) \). To apply Lemma~\ref{lemma:sim tangent}, we  now show that \( X(\vtheta) \in T_{\vtheta} \fM,\forall \vtheta\in \fM \).

Since a global invariant map is always an infinitesimal invariant map, without loss of generality we assume $g_i$ is an infinitesimal invariant map for each $i\in I$.
 Since $g_i$ is an infinitesimal invariant map, \( Dg_i(\vtheta)^\T \nabla_{\vtheta} F(g_i(\vtheta))(\vx_0) = \nabla_{\vtheta} F(\vtheta)(\vx_0) ,\forall \vtheta\in \fM\). When \( \vtheta \in \fM \), we have \( g_i(\vtheta) = \vtheta \), so the equation becomes \( Dg_i(\vtheta)^\T \nabla_{\vtheta} F(\vtheta)(\vx_0) = \nabla_{\vtheta} F(\vtheta)(\vx_0) \), implying \( \nabla_{\vtheta} F(\vtheta)(\vx_0) \in \ker(Dg_i(\vtheta)^\T - \mathrm{id}_M) \). Since this holds for all \( i \in I \), \( X(\vtheta) \in \bigcap_{i \in I} \ker(Dg_i(\vtheta)^\T - \mathrm{id}_M).\) By assumption,  \(\bigcap_{i \in I} \ker(Dg_i(\vtheta)^\T - \mathrm{id}_M)=T_{\vtheta}\fM\). Therefore $X(\vtheta)\in T_{\vtheta}\fM, \forall \vtheta \in \fM$.

Moreover, each fixed-point set \( \{ \vtheta \mid g_i(\vtheta) = \vtheta \} \) is closed (since $g_i$ is continuous), so \( \fM \), being their intersection, is closed in \( \mathbb{R}^M \).  Since \( X(\vtheta) \in T_{\vtheta} \fM \) and \( \fM \) is a closed immersed submanifold, it follows from Lemma~\ref{lemma:sim tangent}  that \( \vtheta(t) \in \fM \) for all \( t \) in the maximal interval of existence. So \(\fM \) is a SIM.
\end{proof}

\begin{remark}
\noindent

The assumption $\fM$ is an immersed submanifold and  \( T_{\vtheta}\fM = \bigcap_{i \in I} \ker(Dg_i^\T(\vtheta) - \mathrm{id}_M) ,\forall \vtheta\in \fM\) often holds under the following two conditions: 
\\(i)  \( \fM \) is an immersed submanifold  with \( T_{\vtheta}\fM = \bigcap_{i \in I} \ker(Dg_i(\vtheta) - \mathrm{id}_M) \), which is automatically satisfied when all \( g_i \) are linear maps.\\
(ii)  \( \bigcap_{i \in I} \ker(Dg_i^\T(\vtheta) - \mathrm{id}_M) = \bigcap_{i \in I} \ker(Dg_i(\vtheta) - \mathrm{id}_M) \) for all \( \vtheta \in \fM \), which is automatically satisfied when \( Dg_i(\vtheta) \) is a linear normal operator for all $i\in I,\vtheta\in \fM$.

As a direct corollary, the assumption $\fM$ is an immersed submanifold and $T_{\vtheta}\fM=\bigcap_{i \in I} \ker(Dg_i^\T(\vtheta) - \mathrm{id}_M),\forall \vtheta\in \fM$ holds if all \( g_i \) are linear normal  operators.
Besides,
 the regularity assumptions on $F(\vtheta)(\vx)$ and $g_i(\vtheta)$ can be weakened to $C^1$, and the domain of $\vtheta$ can be taken to be any open set $U \subset \sR^M$.

\label{rmk:orthogonal}  
\end{remark}

\begin{example}[global invariant map may not induce SIM]
Let \(\vtheta = (\theta_1, \theta_2) \in \sR^2\), and define \(F(\vtheta)(x) = (\theta_1 - \theta_2)x\) for all \(\vtheta \in \sR^2\), \(x \in \sR\). Consider the map \(g(\vtheta) = (\theta_1 + \theta_2, 2\theta_2)\). Then \(g\) is a global invariant map of \(F\). However, the fixed-point set of \(g\), given by \(\fM = \{ \vtheta \mid \theta_2 = 0 \}\), is not a SIM, since it is not invariant under all vector fields in \(\fF = \{ (x, -x) \mid x \in \sR \}\).

\label{example:not sim}
\end{example}

\begin{example}[nonlinear symmetry]  
Let \(F(\vtheta)(x) = ( \sqrt{\theta_1^2 + \theta_2^2} + \frac{1}{\sqrt{\theta_1^2 + \theta_2^2}} + x )^2\), where \(\vtheta = (\theta_1, \theta_2) \in \sR^2 \setminus \{0\}\) and \(x \in \sR\). Define the map \(g: \sR^2 \setminus \{0\} \to \sR^2 \setminus \{0\}\) by  
$
g(\theta_1, \theta_2) = ( \frac{\theta_1}{\theta_1^2 + \theta_2^2}, \frac{\theta_2}{\theta_1^2 + \theta_2^2}).
$  
Then \(g\) is a global invariant map of \(F\), and its fixed-point set is \(\fM = \{ (\theta_1, \theta_2) \in \sR^2 \mid \theta_1^2 + \theta_2^2 = 1 \}\). One can verify that the assumptions of Theorem~\ref{cor:symmetries induced sim} are satisfied. Therefore, \(\fM\) is a SIM.  
\label{example:nonlinear symmetry}  
\end{example}

Invariant maps can form a semigroup under composition. In neural networks, this semigroup is typically an \textbf{orthogonal symmetry group}, as defined in Definition~\ref{def:symmetry group_pre}.  
If a collection of SIMs forms a disjoint partition of the parameter space, we refer to this collection as an \textbf{invariant partition}. As shown in Lemma~\ref{lemma: invariant partition}, a finite orthogonal symmetry group can induce such an invariant partition, where each leaf corresponds to a set of parameters sharing the same stabilizer subgroup.

In the context of invariant partitions, a natural partial order can be defined based on partition coarseness: given two partitions \(\mathcal{P}_1\) and \(\mathcal{P}_2\) of a set, we say that \(\mathcal{P}_1\) is finer than \(\mathcal{P}_2\)  if every block of \(\mathcal{P}_1\) is contained within some block of \(\mathcal{P}_2\).  
Given an analytic model, the collection of orbits of \(\fF\) naturally forms an invariant partition. By Theorem~\ref{def:sim}, any SIM is a union of such orbits. It follows that the orbit partition is the finest invariant partition. Consequently, any invariant partition induced by an orthogonal symmetry group provides an upper bound for the orbit partition under this ordering.

\begin{lemma}[invariant partition induced by an orthogonal symmetry group]
Let \( F(\vtheta)(\vx) \) with \( \vtheta \in \mathbb{R}^M \) be an analytic model, and let \( G \) be an orthogonal symmetry group of finite elements. For each \( \vtheta \in \mathbb{R}^M \), define its stabilizer subgroup as
$$
S(\vtheta) := \{ g \in G \mid g(\vtheta) = \vtheta \}.
$$
Define an equivalence relation on the parameter space by
$
\vtheta_1 \sim \vtheta_2  \Longleftrightarrow  S(\vtheta_1) = S(\vtheta_2).
$
Denote by $[\vtheta]$ the equivalence class containing $\vtheta$. Then the collection $\{[\vtheta]\mid \vtheta\in \sR^M\}$ is an invariant foliation.
\label{lemma: invariant partition}
\end{lemma}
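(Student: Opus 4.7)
The plan is to show that each equivalence class $[\vtheta_0]$ is itself a SIM; since distinct equivalence classes automatically form a disjoint cover of $\sR^M$, this will give the invariant partition. Throughout I write $\mathrm{Fix}(g):=\{\vtheta\in\sR^M\mid g(\vtheta)=\vtheta\}$ for $g\in G$ and $\mathrm{Fix}(H):=\bigcap_{h\in H}\mathrm{Fix}(h)$ for a subgroup $H\leq G$. Fixing any $\vtheta_0$ and setting $H:=S(\vtheta_0)$, unpacking the stabilizer condition $S(\vtheta)=H$ gives the closed-form description
\[
[\vtheta_0] \;=\; \mathrm{Fix}(H)\setminus\bigcup_{g\in G\setminus H}\mathrm{Fix}(g)
\;=\; \mathrm{Fix}(H)\cap\bigcap_{g\in G\setminus H}\bigl(\sR^M\setminus\mathrm{Fix}(g)\bigr),
\]
which will serve as the starting point for both the manifold and the invariance arguments.

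First I would argue that $[\vtheta_0]$ is an embedded analytic submanifold of $\sR^M$. Since every element of $G$ acts as an orthogonal linear map, each $\mathrm{Fix}(g)=\ker(g-\mathrm{id}_M)$ is a linear subspace, and so is $\mathrm{Fix}(H)$. The key observation is that for each $g\in G\setminus H$ the point $\vtheta_0$ lies in $\mathrm{Fix}(H)$ but not in $\mathrm{Fix}(g)$, so $\mathrm{Fix}(g)\cap\mathrm{Fix}(H)$ is a \emph{proper} linear subspace of $\mathrm{Fix}(H)$. Because $|G|<\infty$, the set $[\vtheta_0]$ is the complement in $\mathrm{Fix}(H)$ of a finite union of such proper subspaces, hence a nonempty open (in fact dense) subset of $\mathrm{Fix}(H)$, and therefore an analytic embedded submanifold of $\sR^M$.

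To conclude structural invariance I would invoke Theorem~\ref{cor:symmetries induced sim} together with Remark~\ref{rmk:orthogonal}: since each $g\in G$ is an orthogonal (hence linear and normal) global invariant map of $F$, its fixed-point set $\mathrm{Fix}(g)$ is a SIM, and so is any finite intersection of such sets. Proposition~\ref{lemma:closed under set operations} then says that structural invariant sets are closed under complements and finite intersections, so the displayed expression for $[\vtheta_0]$ is a structural invariant set. Combined with the submanifold property from the previous step, this upgrades $[\vtheta_0]$ to a SIM, and letting $\vtheta_0$ range over $\sR^M$ finishes the proof. The only delicate point I anticipate is the openness claim in Step~2---namely that $\mathrm{Fix}(g)\cap\mathrm{Fix}(H)\subsetneq\mathrm{Fix}(H)$ for every $g\notin H$---but this is immediate from the choice $H=S(\vtheta_0)$, because any $g$ fixing all of $\mathrm{Fix}(H)\ni\vtheta_0$ would by definition belong to $S(\vtheta_0)=H$; everything else is routine bookkeeping with the closure properties already in place.
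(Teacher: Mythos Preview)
Your proposal is correct and follows essentially the same approach as the paper: you express $[\vtheta_0]$ via the same set-theoretic decomposition, invoke Theorem~\ref{cor:symmetries induced sim} with Remark~\ref{rmk:orthogonal} to get that each $\mathrm{Fix}(g)$ is a SIM, apply Proposition~\ref{lemma:closed under set operations} for closure under the relevant set operations, and observe that $[\vtheta_0]$ is relatively open in the linear subspace $\mathrm{Fix}(H)$. Your justification that $\mathrm{Fix}(g)\cap\mathrm{Fix}(H)\subsetneq\mathrm{Fix}(H)$ for $g\notin H$ is in fact slightly more explicit than the paper's.
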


\begin{proof}
For any \( g \in G \), its fixed-point set \( \fM_g := \{ \vtheta \in \mathbb{R}^M \mid g(\vtheta) = \vtheta \} \) is a SIM by Theorem~\ref{cor:symmetries induced sim} and Remark~\ref{rmk:orthogonal}. A straightforward verification confirms that the defined relation satisfies the properties of an equivalence relation. Thus, it suffices to prove that for any $\vtheta\in \sR^M$, $[\vtheta]$ is a SIM. Fix any $\vtheta\in \sR^M$.
By definition,  \( [\vtheta] \) consists of all parameters whose stabilizer is exactly \( S(\vtheta) \). This allows \( [\vtheta] \) to be expressed in terms of set operations on the family \( \{ \fM_g \mid g \in G \} \) as follows:
\[
[\vtheta] = \left( \bigcap_{h \in S(\vtheta)} \fM_h \right) \cap \left( \bigcap_{g \in G \setminus S(\vtheta)} (\mathbb{R}^M \setminus \fM_g) \right).
\]
Since the family of structural invariant sets is closed under finite intersection and complement (Proposition~\ref{lemma:closed under set operations}),  \( [\vtheta] \) is a structural invariant set.

Moreover, since \( G \) has  finite elements, the set \( [\vtheta] \) can be viewed as the linear space \( \bigcap_{h \in S(\vtheta)} \fM_h \) with finitely many linear subspaces \( \left(   \bigcap_{h \in S(\vtheta)} \fM_h \right)\cap\fM_g\) (for \( g \notin S(\vtheta) \)) removed. Consequently, \( [\vtheta] \) is relatively open in the linear subspace \( \bigcap_{h \in S(\vtheta)} \fM_h \), and hence is an immersed submanifold of \( \sR^M \). Therefore, $[\vtheta]$ is a SIM. 

\end{proof}

\subsection{Symmetries of neural networks}

Symmetries are prevalent in deep neural networks, as detailed in Proposition~\ref{prop:infinitesimal symmetry of neural network} and Theorem~\ref{thm:all symmetry of neural network} below.
These symmetries generally originate from two principal sources.
First, the  indistinguishability of neurons within a given layer gives rise to the \textbf{permutation symmetry group}, denoted $G_{\mathrm{per}}$ (Theorem~\ref{thm:all symmetry of neural network}).
Second, the symmetry of the activation function, $\sigma(x)$, constitute another source.
Specifically, if $\sigma(x)$ possesses definite parity (i.e., is an odd or even function), a \textbf{reflection symmetry group}, $G_{\mathrm{sign}}$ or $G_{\mathrm{sign}}'$, emerges as an orthogonal symmetry group (Theorem~\ref{thm:all symmetry of neural network}).
Given that global invariant maps and orthogonal maps are closed under composition, the permutation and reflection group can generate more complex orthogonal symmetry groups under composition.
Furthermore, local symmetries can also be identified.
If $\sigma(0)=0$ or $\sigma'(0)=0$, the activation function $\sigma(x)$ exhibits infinitesimal odd or even behavior in the vicinity of the origin.
This local property results in the actions of elements in $G_{\mathrm{sign}}$ or $G_{\mathrm{sign}}'$ manifesting as infinitesimal symmetry maps (Proposition~\ref{prop:infinitesimal symmetry of neural network}).

To present Proposition~\ref{prop:infinitesimal symmetry of neural network} and Theorem~\ref{thm:all symmetry of neural network}, we first introduce Definition~\ref{def:symmetry group}. 
The group \( S_2^p \rtimes S_p \) in Definition~\ref{def:symmetry group} is also known as the hyper-octahedral group\cite{young1928quantitative}.

\begin{definition}
Consider the multi-layer neural network $F$ from Definition~\ref{def:fully connected network}, with layer widths $n_0,\ldots,n_L$. For any positive integer $p$, let $S_2^p$ denote the group of $p\times p$ diagonal sign matrices (entries in $\{\pm 1\}$), and let $S_p$ denote the group of $p\times p$ permutation matrices. Define the semidirect product \( S_2^p \rtimes S_p \) with the group operation  given by
\[
(\boldsymbol{\Lambda}_1, \boldsymbol{P}_1)(\boldsymbol{\Lambda}_2, \boldsymbol{P}_2) = \left( \boldsymbol{\Lambda}_1 \boldsymbol{P}_1 \boldsymbol{\Lambda}_2 \boldsymbol{P}_1^\top,\, \boldsymbol{P}_1 \boldsymbol{P}_2 \right),
\]
where \( \boldsymbol{\Lambda}_1, \boldsymbol{\Lambda}_2 \in S_2^p \) and \( \boldsymbol{P}_1, \boldsymbol{P}_2 \in S_p \).
One can readily verify that this structure satisfies the axioms of a semidirect product.
We define the following groups and describe their action on the parameter space of 
$F$:

\begin{enumerate}
\item

Define the group $G_{\mathrm{per}}=S_{n_1}\times \cdots \times S_{n_{L-1}}$, where $\times$ denotes the direct sum. For any $(\vP^{(1)},\ldots,\vP^{(L-1)})\in G_{\mathrm{per}}$, define its action  on the parameter space  as
    $$(\vP^{(1)},\ldots,\vP^{(L-1)}): \left(\vW^{(l)}, \vb^{(l)}\right)_{l=1}^L \mapsto \left(\vP^{(l)}\vW^{(l)}\vP^{(l-1)^\T},\vP^{(l)} \vb^{(l)}\right)_{l=1}^L,$$
    with the conventions $\vP^{(0)}=\mathrm{id}_{n_0}$ and $\vP^{(L)}=\mathrm{id_{n_L}}$.

    \item

    Define the group $G_{\mathrm{sign}}=S_2^{n_1} \times \cdots \times S_2^{n_{L-1}}$. For any  $(\vLambda^{(1)},\ldots,\vLambda^{(L-1)})\in G_{\mathrm{sign}}$, define its action  on the parameter space  as
\[
(\vLambda^{(1)},\ldots,\vLambda^{(L-1)}): \left(\vW^{(l)}, \vb^{(l)}\right)_{l=1}^L \mapsto \left(\vLambda^{(l)}\vW^{(l)}\vLambda^{(l-1)}, \vLambda^{(l)} \vb^{(l)}\right)_{l=1}^L,
\]
with the conventions $\vLambda^{(0)}=\mathrm{id}_{n_0}$ and $\vLambda^{(L)}=\mathrm{id_{n_L}}$.

\item

Define the group $G_{\mathrm{sign}}'=S_2^{n_1} \times \cdots \times S_2^{n_{L-1}}$. For any $(\vLambda^{(1)},\ldots,\vLambda^{(L-1)})\in G_{\mathrm{sign}}'$, define its action  on the parameter space  as
\[
(\vLambda^{(1)},\ldots,\vLambda^{(L-1)}): \left(\vW^{(l)}, \vb^{(l)}\right)_{l=1}^L \mapsto \left(\vLambda^{(l)}\vW^{(l)}, \vLambda^{(l)} \vb^{(l)}\right)_{l=1}^L,\]
with the conventions $\vLambda^{(L)}=\mathrm{id_{n_L}}$.

\item

Define the group $G_{\mathrm{combine}}=(S_2^{n_1}\rtimes S_{n_1})\times\cdots\times(S_2^{n_{L-1}}\rtimes S_{n_{L-1}})$. For $g=((\vLambda^{(1)},\vP^{(1)}),\ldots,(\vLambda^{(L-1)},\vP^{(L-1)}))\in G_{\mathrm{combine}}$, define its action on parameter space as
$$
g:\ \bigl(\vW^{(l)}, \vb^{(l)}\bigr)_{l=1}^L \mapsto \bigl(\vLambda^{(l)} \vP^{(l)} \vW^{(l)} \vP^{(l-1)^\T} \vLambda^{(l-1)},\, \vLambda^{(l)} \vP^{(l)} \vb^{(l)}\bigr)_{l=1}^L,
$$
with the conventions $\vP^{(0)}=\vLambda^{(0)}=\mathrm{id}_{n_0},\vP^{(L)}=\vLambda^{(L)}=\mathrm{id_{n_L}}$. 

\end{enumerate}

\label{def:symmetry group}
\end{definition}

\begin{proposition}[infinitesimal symmetry of deep neural networks]
Consider the multi-layer neural network from Definition~\ref{def:fully connected network}, with layer widths $n_0,\ldots,n_L$. Let $G_{\mathrm{sign}}$ and $G_{\mathrm{sign}}'$ be the groups  defined in Definition~\ref{def:symmetry group}. Then the following statements hold: 
\begin{enumerate}

\item If $\sigma(0)=0$,  then the action of any element in $G_{\mathrm{sign}}$ is an infinitesimal invariant map.
\item If $\sigma'(0)=0$, 
then the action of any element in $G_{\mathrm{sign}}'$ is an infinitesimal invariant map.
\end{enumerate}
\label{prop:infinitesimal symmetry of neural network}
\end{proposition}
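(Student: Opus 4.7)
The plan is to reduce the infinitesimal invariance condition to an invariance of the parameter-gradient of $F$ at fixed points via the chain rule, and then verify it through a combined forward/backward-pass analysis on the fixed-point set.

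Each $g \in G_{\mathrm{sign}}$ or $G_{\mathrm{sign}}'$ acts on $\sR^M$ as a linear orthogonal map built from entrywise sign flips (and, for $G_{\mathrm{sign}}$, the corresponding column flips), so $g^2 = \mathrm{id}$ and hence $Dg = g = g^{\T}$; the origin lies in $\fM$, so $\fM \neq \emptyset$. By Definition~\ref{def:invariant maps} and the chain rule, the infinitesimal invariance condition at $\vtheta \in \fM$ reduces to $g \cdot \nabla_{\vtheta} F(\vtheta)(\vx) = \nabla_{\vtheta} F(\vtheta)(\vx)$ for every $\vx \in \sR^d$. To analyze this, I would invoke the standard backpropagation formulas $\nabla_{\vW^{(l)}} F = \vdelta^{(l)}(\va^{(l-1)})^{\T}$ and $\nabla_{\vb^{(l)}} F = \vdelta^{(l)}$, where $\vdelta^{(l)} := \partial F/\partial \vh^{(l)}$ satisfies $\vdelta^{(l)} = \sigma'(\vh^{(l)}) \odot (\vW^{(l+1)})^{\T} \vdelta^{(l+1)}$ with $\vdelta^{(L)} = \sigma'(\vh^{(L)})$, and partition neurons by sign via $I_l^{\pm} := \{i : (\vLambda^{(l)})_{ii} = \pm 1\}$. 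The fixed-point equations force, in the $G_{\mathrm{sign}}$ case (with $\vLambda^{(0)} = \vLambda^{(L)} = I$), that $\vW^{(l)}$ is block-diagonal with respect to the $I_{l-1}^{\pm}$-to-$I_l^{\pm}$ partition and $b_i^{(l)} = 0$ on $I_l^{-}$; in the $G_{\mathrm{sign}}'$ case, the entire $i$-th row of $\vW^{(l)}$ together with $b_i^{(l)}$ vanishes on $I_l^{-}$.

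A forward pass on $\fM$ then shows $h_i^{(l)} = 0$ for $i \in I_l^{-}$ in both cases---trivially from zero rows for $G_{\mathrm{sign}}'$, and by a forward induction on $l$ using $\sigma(0) = 0$ (hence $a_j^{(l)} = 0$ on $I_l^{-}$) for $G_{\mathrm{sign}}$. For the backward pass, under $\sigma'(0) = 0$ the factor $\sigma'(h_i^{(l)})$ immediately kills $\delta_i^{(l)}$ on $I_l^{-}$; under $\sigma(0) = 0$ I would argue by backward induction on $l$, using the block-diagonal structure of $\vW^{(l+1)}$ to restrict the sum defining $\delta_i^{(l)}$ (for $i \in I_l^{-}$) to $k \in I_{l+1}^{-}$, on which the inductive hypothesis $\delta_k^{(l+1)} = 0$ applies, with the base case $l = L - 1$ handled by $\vLambda^{(L)} = I$. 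Feeding the resulting vanishing of $\delta_i^{(l)}$ on $I_l^{-}$---and, for $G_{\mathrm{sign}}$, of $a_j^{(l-1)}$ on $I_{l-1}^{-}$---into an entrywise comparison of $\vLambda^{(l)} \vdelta^{(l)}(\va^{(l-1)})^{\T} \vLambda^{(l-1)}$ against $\vdelta^{(l)}(\va^{(l-1)})^{\T}$ (and likewise $\vLambda^{(l)} \vdelta^{(l)}$ against $\vdelta^{(l)}$) yields $g \cdot \nabla_{\vtheta} F = \nabla_{\vtheta} F$.

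The main obstacle is the $G_{\mathrm{sign}}$ backward induction, which has to thread the block-diagonal weight structure together with the forward-vanishing activations through the layer recursion; the assumption $\sigma(0) = 0$ must be invoked only to zero out post-activations in the forward pass (no odd-parity hypothesis on $\sigma$ is needed), and the boundary conventions $\vLambda^{(0)} = \vLambda^{(L)} = I$ are what initialize both the forward and the backward recursions. By contrast, the $G_{\mathrm{sign}}'$ case will collapse to essentially a single line once the fixed-point structure is unpacked, because $\sigma'(h_i^{(l)}) = \sigma'(0) = 0$ directly annihilates $\vdelta^{(l)}$ on $I_l^{-}$ without any induction.
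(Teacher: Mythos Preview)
Your proposal is correct and follows essentially the same approach as the paper: reduce infinitesimal invariance at fixed points to $g\cdot\nabla_{\vtheta}F=\nabla_{\vtheta}F$ via linearity/orthogonality of $g$, invoke the backpropagation formulas, establish $h_i^{(l)}=0$ on $I_l^-$ by a forward induction (using $\sigma(0)=0$) or directly from zero rows, then show $\delta_i^{(l)}=0$ on $I_l^-$ by a backward induction through the block structure (case~(i)) or directly from $\sigma'(0)=0$ (case~(ii)). The paper's proof is organized identically, with only cosmetic differences in notation ($\vz^{(l)}$ for your $\vh^{(l)}$, $I_l$ for your $I_l^-$) and in how the reduction step is phrased.
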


\begin{proof}
    The proof is provided in Appendix~\ref{sec:proof of infinitesimal symmetry}.
\end{proof}

\begin{theorem}[symmetry-induced SIMs of deep neural networks]
Consider the multi-layer neural network from Definition~\ref{def:fully connected network}, with layer widths $n_0,\ldots,n_L$. Consider the groups and actions as   defined in Definition~\ref{def:symmetry group}. Then the following statements hold: 
\begin{enumerate}
    \item  $G_{\mathrm{per}}$ is an orthogonal symmetry group and thus induces an invariant partition.
\item If $\sigma(x)$ is an odd function, then $G_{\mathrm{sign}}$  is an orthogonal symmetry group. Moreover, $G_{\mathrm{sign}}$ and $G_{\mathrm{per}}$ generate a new orthogonal symmetry group under map composition, which  equals  $G_{\mathrm{combine}}$. Therefore, $G_{\mathrm{combine}}$ induces an invariant partition.
\footnote{The case in which $\sigma$ is even is analogous; we omit it for brevity.}

\item Assume $\sigma(0)=0$.  Let $I_0$ and $I_L$  be empty sets.  For any choice of subsets $I_l\subset \{1,\ldots,n_l\}$ for each $l\in\{1,\ldots,L-1\}$, define
    \[
    \mathcal{M} = \left\{ \left(\vW^{(l)}, \vb^{(l)}\right)_{l=1}^L \ \middle| \ 
    \begin{aligned}
    &\text{ } \vW^{(l)}_{ij} = 0, \quad \forall l \in \{1, \ldots, L\}, \ (i, j) \in I_l \times I_{l-1}^c \cup I_l^c \times I_{l-1}, \\
    &\text{ } \vb_i^{(l)} = 0, \quad \forall l \in \{1, \ldots, L\}, \ i \in I_l, \\
    &\text{ } \vW^{(l)}_{ij} = c_{ij}^{(l)}, \quad \forall l \in \{1, \ldots, L\}, \ (i, j) \in I_l \times I_{l-1}.
    \end{aligned}
    \right\},
    \]
where each \( c_{ij}^{(l)} \) is an arbitrary real number, $\vW_{ij}^{(l)}$ represents the matrix entry of $\vW^{(l)}$
 at position $(i,j)$, and $I_l^c=\{1,\ldots,n_l\}\setminus I_l$.
Then $\fM$ is a SIM.
\item If $\sigma'(0)=0$, then for each $l\in\{1,\ldots,(L-1)\}$ and $j\in\{1,\ldots,n_l\}$, the set
    $$\mathcal{M}_{l,j} = \left\{  \left(\vW^{(k)}, \vb^{(k)}\right)_{k=1}^L \mid \vW_{j}^{(l)} = \vzero \text{ and } \vb_j^{(l)} = 0 \right\}$$
    is a SIM. Here, $\vW_{j}^{(l)}$ denotes the $j$-th row of $\vW^{(l)}$.
\end{enumerate}
\label{thm:all symmetry of neural network}
\end{theorem}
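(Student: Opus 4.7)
The plan is to verify the four assertions by combining a direct forward-propagation argument with the symmetry-induced-SIM machinery established earlier. For (i), I would prove by induction on $l$ that the activations $\tilde{\va}^{(l)}$ produced by the permuted parameters satisfy $\tilde{\va}^{(l)} = \vP^{(l)}\va^{(l)}$, using the fact that permutation matrices commute with the entrywise action of $\sigma$; since $\vP^{(L)} = \mathrm{id}$, the output $\tilde{\va}^{(L)} = \va^{(L)}$ is unchanged. Orthogonality of permutation matrices is immediate, and the invariant partition claim follows from Lemma~\ref{lemma: invariant partition}. For (ii), the analogous induction works when $\sigma$ is odd, now using $\sigma(\vLambda^{(l)}\vz) = \vLambda^{(l)}\sigma(\vz)$ for any diagonal sign matrix $\vLambda^{(l)}$, which gives $\tilde{\va}^{(l)} = \vLambda^{(l)}\va^{(l)}$ and hence that $G_{\mathrm{sign}}$ is an orthogonal symmetry group. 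To identify the group generated by $G_{\mathrm{sign}}$ and $G_{\mathrm{per}}$ with $G_{\mathrm{combine}}$, the inclusion $\supseteq$ is immediate since each $(\vLambda,\vP) \in G_{\mathrm{combine}}$ is realized as the $\vP$-action followed by the $\vLambda$-action. For $\subseteq$, I would use the identity $\vP^{(l)}\vLambda^{(l)} = (\vP^{(l)}\vLambda^{(l)}\vP^{(l)^\T})\vP^{(l)}$ to push permutations past sign matrices---the conjugate remaining a diagonal sign matrix---and verify that composing two such combined actions yields another combined action governed exactly by the semidirect-product law $(\vLambda_1,\vP_1)(\vLambda_2,\vP_2) = (\vLambda_1\vP_1\vLambda_2\vP_1^{\T},\vP_1\vP_2)$.

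For (iii) and (iv), the strategy is to exhibit the candidate manifolds as fixed-point sets of a single carefully chosen element of $G_{\mathrm{sign}}$ or $G_{\mathrm{sign}}'$ and then apply Theorem~\ref{cor:symmetries induced sim} together with Remark~\ref{rmk:orthogonal}. For (iii), I would take $\vLambda \in G_{\mathrm{sign}}$ with $\vLambda^{(l)}_{ii} = -1$ iff $i \in I_l$; the fixed-point condition $\vLambda^{(l)}\vW^{(l)}\vLambda^{(l-1)} = \vW^{(l)}$ forces $\vW^{(l)}_{ij} = 0$ precisely when $\vLambda^{(l)}_{ii}\vLambda^{(l-1)}_{jj} = -1$, i.e., on $(I_l \times I_{l-1}^c) \cup (I_l^c \times I_{l-1})$, while $\vLambda^{(l)}\vb^{(l)} = \vb^{(l)}$ forces $\vb^{(l)}_i = 0$ for $i \in I_l$, with the remaining entries unconstrained; this recovers $\mathcal{M}$ exactly. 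For (iv), take $\vLambda \in G_{\mathrm{sign}}'$ with all diagonal entries $+1$ except $\vLambda^{(l)}_{jj} = -1$: the fixed-point set is precisely $\mathcal{M}_{l,j}$. Proposition~\ref{prop:infinitesimal symmetry of neural network} then supplies the required infinitesimal invariance of these actions under the respective hypothesis on $\sigma$, and because each action is an orthogonal linear (hence normal) operator, Remark~\ref{rmk:orthogonal} verifies the tangent-space hypothesis of Theorem~\ref{cor:symmetries induced sim}, completing the SIM conclusion.

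The main obstacle will be the closure calculation in (ii): one must carefully match the product of two composed actions against the semidirect-product law on every layer, in particular handling the $(l-1)$-indexed factors via transposition (using $\vP^{\T} = \vP^{-1}$ and $\vLambda^{\T} = \vLambda$), so that the transpose of the layer-$(l-1)$ expression cleanly reproduces the same $(\vLambda_3,\vP_3)$ obtained at layer $l$. Once this bookkeeping is in place, parts (i), (iii), and (iv) reduce to direct specializations of the already-established results.
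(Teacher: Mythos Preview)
Your strategy for (i), (ii), and (iv) matches the paper's proof essentially verbatim: the forward-propagation induction, the claim that the $G_{\mathrm{combine}}$ verification is routine bookkeeping, and the single-element fixed-point argument for $\fM_{l,j}$ are all exactly what the paper does.

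There is, however, a genuine gap in your treatment of (iii). You claim that the fixed-point set of the chosen $\vLambda\in G_{\mathrm{sign}}$ ``recovers $\mathcal{M}$ exactly,'' but it does not. The fixed-point condition $\vLambda^{(l)}\vW^{(l)}\vLambda^{(l-1)}=\vW^{(l)}$ imposes \emph{no} constraint on the entries $\vW^{(l)}_{ij}$ with $(i,j)\in I_l\times I_{l-1}$, since there $\vLambda^{(l)}_{ii}\vLambda^{(l-1)}_{jj}=(-1)(-1)=1$. Thus the fixed-point set is the larger linear subspace
\[
\fM'=\bigl\{\vtheta : \vW^{(l)}_{ij}=0 \text{ on } (I_l\times I_{l-1}^c)\cup(I_l^c\times I_{l-1}),\ \vb^{(l)}_i=0 \text{ on } I_l\bigr\},
\]
whereas $\fM$ is the affine slice of $\fM'$ obtained by additionally pinning each $\vW^{(l)}_{ij}$, $(i,j)\in I_l\times I_{l-1}$, to a prescribed constant $c^{(l)}_{ij}$. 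Theorem~\ref{cor:symmetries induced sim} applied to $\vLambda$ therefore yields only that $\fM'$ is a SIM, not $\fM$.

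The missing step---which the paper supplies by reaching back into the backpropagation computation inside the proof of Proposition~\ref{prop:infinitesimal symmetry of neural network}---is that on $\fM'$ one has $\vdelta^{(l)}_i=0$ for $i\in I_l$ and $\va^{(l-1)}_j=0$ for $j\in I_{l-1}$, so $\partial F/\partial\vW^{(l)}_{ij}=\vdelta^{(l)}_i\,\va^{(l-1)}_j=0$ for every $(i,j)\in I_l\times I_{l-1}$. These entries are conserved along the flow, which is what foliates $\fM'$ into the affine SIMs $\fM$. Without this second ingredient your argument proves only the weaker statement that $\fM'$ is a SIM.
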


\begin{proof}
(i) Let $\vP=(\vP^{(1)},\ldots,\vP^{(L-1)})$ and $\vP'=(\vP'^{(1)},\ldots,\vP'^{(L-1)})$ be elements of $G_{\mathrm{per}}$. Then the composition of their actions is given by
$$\vP'\circ \vP :\left(\vW^{(l)}, \vb^{(l)}\right)_{l=1}^L \mapsto \left(\vP'^{(l)}\vP^{(l)}\vW^{(l)}(\vP'^{(l-1)}\vP^{(l-1)})^\T,\vP'^{(l)}\vP^{(l)} \vb^{(l)}\right)_{l=1}^L,$$
which is exactly the action of $\vP'\vP$. 
Therefore, it is a group action. 

 Recall  $\va^{(l)}=\sigma(\vW^{(l)}\va^{(l-1)}+\vb^{(l)})$ for $l=1,\ldots,L-1$, and  $\va^{(L)}=\vW^{(L)}\va^{(L-1)}+\vb^{(L)}.$  We claim that the
 action of $\vP$ changes $\va^{(l)}$ to $\vP^{(l)} \va^{(l)}$ for $l=0,1,\ldots,L$. For $l=0$, $\va^{(0)}=\vx$. Since $\vP^{(0)}=\mathrm{id_{n_0}}$, the claim holds when $l=0$.
  Suppose this holds for some $l\in \{0,1,\ldots,L-1\}$.  If $l<L-1$, 
  then $\va^{(l+1)}$ is changed to 
$\sigma(\vP^{(l+1)}\vW^{(l+1)}\vP^{(l)^\T} \vP^{(l)} \va^{(l)}+\vP^{(l+1)}\vb^{(l+1)})=\sigma(\vP^{(l+1)}(\vW^{(l+1)}\va^{(l)}+\vb^{(l+1)}))=\vP^{(l+1)}\sigma(\vW^{(l+1)}\va^{(l)}+\vb^{(l+1)})=
\vP^{(l+1)}\va^{(l+1)}$.
In a similar manner, one can readily verify that \( \va^{(l+1)} \) is changed to \( \vP^{(l+1)}\va^{(l+1)} \) when \( l = L - 1 \). In both cases, $\va^{(l+1)}$ is changed to $\vP^{(l+1)}\va^{(l+1)}.$
By mathematical induction, the action of $\vP$ changes $\va^{(l)}$ to $\vP^{(l)} \va^{(l)}$ for $l=0,1,\ldots,L.$ 
Since $\vP^{(L)}=\mathrm{id}_{n_L}$, the action of $\vP$ does not change the output of the model.
Moreover, the permutation of coordinates is linear and does not change the norm of a vector. Therefore $G_{\mathrm{per}}$ is an orthogonal symmetry group.

(ii) Similar to the proof of (i), one can verify that the any action of element in  $G_{\mathrm{sign}}$ is a linear orthogonal operator that does not change the output of the model. So $G_{\mathrm{sign}}$ is an orthogonal symmetry group.
 The check that $G_{\mathrm{combine}}$ is the group generated by $G_{\mathrm{sign}}$ and $G_{\mathrm{per}}$ is straightforward, and we omit the details. Because both global invariant maps and orthogonal maps are closed under composition, \(G_{\mathrm{combine}}\) is also an orthogonal symmetry group.

(iii) In the proof of the first statement of Proposition~\ref{prop:infinitesimal symmetry of neural network}, one sees that the set
\[ \fM'=\left\{ \left(\vW^{(l)}, \vb^{(l)}\right)_{l=1}^L \ \middle| \ 
\begin{aligned}
&\text{ } \vW^{(l)}_{ij} = 0, \quad \forall l \in \{1, \ldots, L\}, \ (i, j) \in I_l \times I_{l-1}^c \cup I_l^c \times I_{l-1}, \\
&\text{ } \vb_i^{(l)} = 0, \quad \forall l \in \{1, \ldots, L\}, \ i \in I_l.
\end{aligned}
\right\}
\]
is the fixed point of $\vLambda=(\vLambda^{(1)},\ldots, \vLambda^{(L-1)})$, where $\vLambda^{(l)}$ is the diagonal matrix with entries equal to $-1$ for row indices in $I_l$ and $+1$ otherwise.
By Proposition~\ref{prop:infinitesimal symmetry of neural network}, $\vLambda$ is an infinitesimal invariant map. Since $\vLambda$ is linear and orthogonal,
by Theorem~\ref{cor:symmetries induced sim} and Remark~\ref{rmk:orthogonal}, 
$\fM'$ is a SIM.
By the proof of the first statement of Proposition~\ref{prop:infinitesimal symmetry of neural network}, the $i$-th row and $j$-th column of
$\frac{\partial F}{\partial \vW^{(l)}}$ are zero for any $i\in I_l, j\in I_{l-1}$. Therefore, for any $i\in I_l, j\in I_{l-1}$,
$\vW^{(l)}_{ij}$ remains constant during training. Therefore $\fM$ is a SIM.

(iv) Fix any $l\in\{1,\ldots,(L-1)\}$ and $j\in\{1,\ldots,n_l\}$.
Define $\vLambda^{(l)}$ to be the $n_l\times n_l$ diagonal matrix such that the diagonal satisfies $\vLambda^{(l)}_{jj}=-1$ and $\vLambda^{(l)}_{ii}=1, i\neq j$. Define $\vLambda=(\mathrm{id}_{n_1},\ldots,\mathrm{id}_{n_{l-1}},\vLambda^{(l)},\mathrm{id}_{n_{l+1}},\ldots, \mathrm{id}_{n_{(L-1)}})$.
By Proposition~\ref{prop:infinitesimal symmetry of neural network}, $\vLambda$ is an infinitesimal invariant map. It is easy to see that $\vLambda$ is an orthogonal linear map. By Theorem~\ref{cor:symmetries induced sim} and Remark~\ref{rmk:orthogonal}, the fixed point of $\vLambda$ is a SIM. Since the fixed point of $\vLambda$ is $\fM_{l,j}$, $\fM_{l,j}$ is a SIM.

\end{proof}

\begin{remark}
The symmetries in Theorem~\ref{thm:all symmetry of neural network} can be extended to other architectures, such as ResNet, Convolutional Neural Networks, and Transformers. 
\end{remark}

\section{Orbits of Two-layer Neural Networks}
\label{sec:orbit}

As established in Theorem~\ref{thm:all symmetry of neural network}, symmetry-induced SIMs exist within neural networks. This naturally prompts the question: \textbf{Do these symmetry-induced SIMs generate all possible SIMs in neural networks?}
Here, ``generate'' refers to the ability to obtain all SIMs of the neural network through set operations (unions, intersections, and complements).

In this section, we address this question in the context of generic two-layer neural networks and provide an affirmative answer. 
As shown in Figure~\ref{fig:sec_5},
our strategy  proceeds in three steps: 
\begin{enumerate}
    \item Develop a neuron independence result to calculate the rank of the Lie closure at non-degenerate $\vtheta$ (Lemma~\ref{lemma:neuron independence}, Corollary~\ref{cor:lie rank of non-degenerate}).
    \item Establish a perturbation lemma to transform the degenerate cases into non-degenerate  whenever possible (Lemma~\ref{lemma:pertubation_new}). This lemma allows us to calculate the rank of the Lie closure at all $\vtheta$ (Corollary~\ref{cor:pertube to non-degenerate},~\ref{cor:all rank of lie closure}). 
    \item Analyze the connectivity of the leaves of invariant partitions (Corollary~\ref{cor:connected}).
\end{enumerate}
These preparatory results, in conjunction with Theorem~\ref{theorem: page 44 book}, establish that all SIMs are symmetry-induced for generic two-layer neural networks (Theorem~\ref{thm:final orbit}).

\begin{figure}[h]
    \centering
    \includegraphics[width=0.95\linewidth]{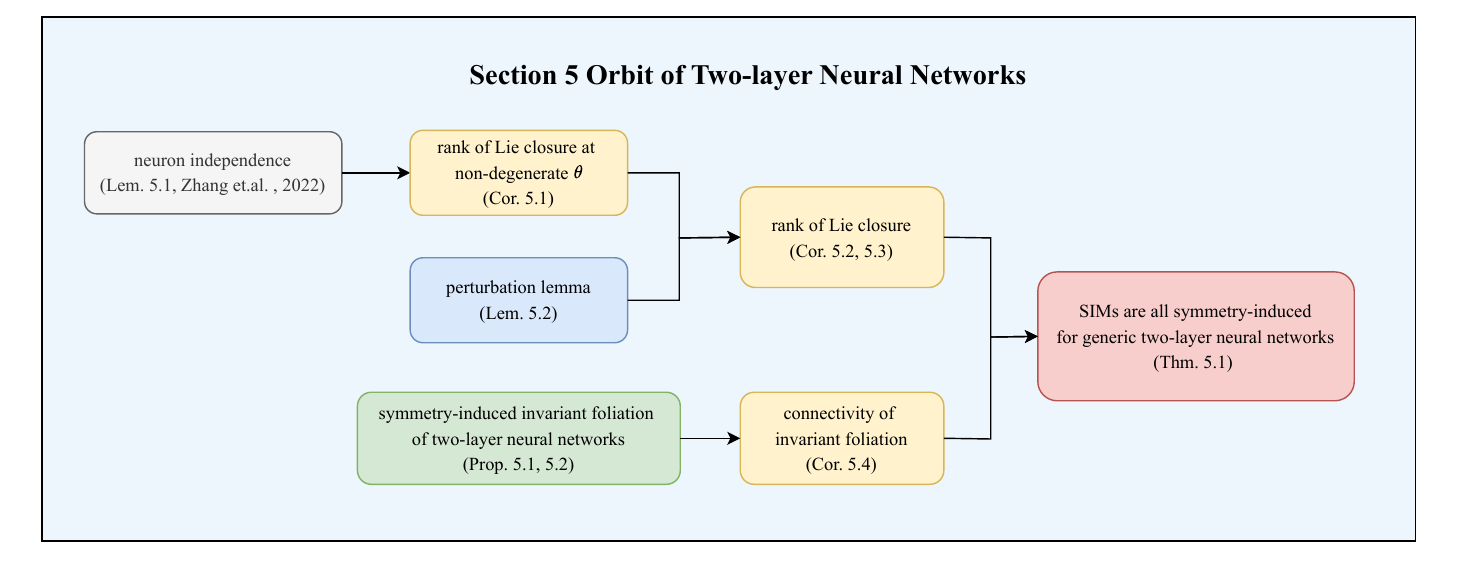}
    \caption{Flowchart illustrating the logical structure of Section~\ref{sec:orbit}.}
    \label{fig:sec_5}
\end{figure}

The two-layer neural network studied in this section is defined in Definition~\ref{def:neural network}. The following two preparatory  propositions characterize the invariant partitions induced by the permutation symmetry group $G_{\mathrm{per}}$ and the combined symmetry groups $G_{\mathrm{combine}}$  in Definition~\ref{def:symmetry group}.

\begin{proposition}[invariant partition induced by the permutation symmetry group]
Consider the two-layer neural network of width $m$ and the group $G_{\mathrm{per}}$  in Definition~\ref{def:symmetry group}. By Theorem~\ref{thm:all symmetry of neural network}, $G_{\mathrm{per}}$ induces an invariant partition. 
Let \( \mathfrak{P}_m \) denote the set of all partitions of \( \{1,\ldots,m\} \). For each partition \( \mathcal{P} = \{B_1, \ldots, B_s\} \in \mathfrak{P}_m \), define
\[
\mathcal{M}_{\mathcal{P}} := \left\{ (a_i, \vw_i)_{i=1}^m \in \mathbb{R}^{(d+1)m} \,\middle|\, 
\begin{aligned}
&(a_i, \vw_i) = (a_j, \vw_j), &&\forall p\in \{1,\ldots,s\}, \forall i,j \in B_p \\
&(a_i, \vw_i) \ne (a_j, \vw_j), &&
\forall p,p'\in \{1,\ldots,s\}, p\neq p', \forall i\in B_p,j\in B_{p'}
\end{aligned}
\right\}.
\]
Then the collection \( \{ \mathcal{M}_{\mathcal{P}} \mid \mathcal{P} \in \mathfrak{P}_m \} \) equals the invariant partition induced by $G_{\mathrm{per}}$.
\label{example:sim induced by permutation symmetry group}
\end{proposition}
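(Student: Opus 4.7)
The plan is to identify each equivalence class in the $G_{\mathrm{per}}$-invariant partition of Lemma~\ref{lemma: invariant partition} with some $\mathcal{M}_{\mathcal{P}}$. The bridge is the natural ``equality partition'' $\mathcal{P}(\vtheta) \in \mathfrak{P}_m$ assigned to each $\vtheta = (a_i, \vw_i)_{i=1}^m$, defined by $i \sim j \iff (a_i, \vw_i) = (a_j, \vw_j)$. By reading off the two defining conditions of $\mathcal{M}_{\mathcal{P}}$—equality of $(a_i,\vw_i)$ within blocks and strict inequality across blocks—we get $\vtheta \in \mathcal{M}_{\mathcal{P}}$ iff $\mathcal{P} = \mathcal{P}(\vtheta)$. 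Hence $\{\mathcal{M}_{\mathcal{P}} \mid \mathcal{P} \in \mathfrak{P}_m\}$ is automatically a disjoint partition of $\mathbb{R}^{(d+1)m}$ indexed by $\mathfrak{P}_m$, with each $\vtheta$ lying in the unique block $\mathcal{M}_{\mathcal{P}(\vtheta)}$.

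The next step is to compute the stabilizer $S(\vtheta) \subseteq G_{\mathrm{per}} = S_m$. For the two-layer network from Definition~\ref{def:neural network}, the action in Definition~\ref{def:symmetry group} collapses to a permutation of the pairs $(a_i, \vw_i)$. A permutation $\pi$ therefore satisfies $\pi \in S(\vtheta)$ iff $(a_{\pi(i)}, \vw_{\pi(i)}) = (a_i, \vw_i)$ for all $i$. Using the equality-within-blocks condition this is implied whenever $\pi$ preserves each block of $\mathcal{P}(\vtheta)$; using the distinctness-across-blocks condition it is forced (since moving an index to a different block would change its pair). So $S(\vtheta)$ is exactly the Young subgroup $\prod_{B \in \mathcal{P}(\vtheta)} \mathrm{Sym}(B)$, which depends only on $\mathcal{P}(\vtheta)$. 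In particular any two points in the same $\mathcal{M}_{\mathcal{P}}$ share a stabilizer, which gives $\mathcal{M}_{\mathcal{P}(\vtheta)} \subseteq [\vtheta]$.

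To close, I would verify that distinct partitions yield distinct Young subgroups, by observing that the orbits of $\prod_{B} \mathrm{Sym}(B)$ acting on $\{1,\ldots,m\}$ are exactly the blocks $B$; thus $\mathcal{P}$ is recoverable from $S(\vtheta)$. Combining this injectivity with the previous paragraph forces $[\vtheta] \subseteq \mathcal{M}_{\mathcal{P}(\vtheta)}$ as well, yielding equality. Letting $\vtheta$ range over $\mathbb{R}^{(d+1)m}$ then reproduces the full invariant partition $\{[\vtheta]\}$ as $\{\mathcal{M}_{\mathcal{P}} \mid \mathcal{P} \in \mathfrak{P}_m\}$.

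The only place that requires care is the computation of the stabilizer: it is essential to use both halves of the definition of $\mathcal{M}_{\mathcal{P}}$, since the distinctness-across-blocks condition is exactly what rules out stabilizing permutations that would otherwise merge two blocks and enlarge $S(\vtheta)$ beyond the Young subgroup. Everything else is set-theoretic bookkeeping.
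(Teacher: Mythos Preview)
Your proposal is correct and follows essentially the same approach as the paper: define the equality partition $\mathcal{P}(\vtheta)$, identify the stabilizer $S(\vtheta)$ as the Young subgroup $\prod_{B\in\mathcal{P}(\vtheta)}\mathrm{Sym}(B)$, and use the bijection between partitions and Young subgroups to conclude $[\vtheta]=\mathcal{M}_{\mathcal{P}(\vtheta)}$. Your explicit observation that the partition is recoverable from the orbits of the Young subgroup on $\{1,\ldots,m\}$ is exactly what the paper leaves as ``readily verified'' when asserting $\mathcal{P}_{\vtheta}\neq\mathcal{P}_{\vtheta_0}\Rightarrow S(\vtheta)\neq S(\vtheta_0)$.
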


\begin{proof}

 The proof is provided in Appendix~\ref{sec:proof of sim induced by permutation symmetry}.

\end{proof}

\begin{proposition}[invariant partition induced by the combined symmetry group]
Consider the two-layer neural network of width $m$
with odd activation function. Consider
 the group $G_{\mathrm{combine}}$ in Definition~\ref{def:symmetry group}. By
 Theorem~\ref{thm:all symmetry of neural network}, $G_{\mathrm{combine}}$ induces an invariant partition. Let \( \mathfrak{P}_m \) denote the set of all partitions of \( \{1,\ldots,m\} \). For each partition \( \mathcal{P} = \{B_1, \ldots, B_s\} \in \mathfrak{P}_m \) ($B_1 $ can be empty) and each \( \vgamma = (\gamma_1,\ldots,\gamma_m) \in \{-1,1\}^m \), define
\[
\mathcal{M}_{\mathcal{P}, \vgamma} := \left\{ (a_i, \vw_i)_{i=1}^m \in \mathbb{R}^{(d+1)m} \,\middle|\, 
\begin{aligned}
&(a_i,\vw_i) = \mathbf{0}, && \forall i \in B_1, \\
&\gamma_i(a_i, \vw_i) = \gamma_j(a_j, \vw_j), && \forall p \in \{2,\ldots,s\},\forall i,j \in B_p ,  \\
&(a_i, \vw_i) \ne \pm(a_j, \vw_j), && \forall p,p'\in \{1,\ldots,s\}, p\neq p', \forall i\in B_p,j\in B_{p'}
\end{aligned}
\right\}.
\]
Then the collection \( \{ \mathcal{M}_{\mathcal{P}, \vgamma} \mid \mathcal{P} \in \mathfrak{P}_m,\ \vgamma \in \{-1,1\}^m \} \) equals the invariant partition induced by $G_{\mathrm{combine}}$.
\label{example:sim induced by combined symmetry of tanh network}
\end{proposition}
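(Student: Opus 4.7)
The plan is to follow the template of the proof of Proposition~\ref{example:sim induced by permutation symmetry group}, enhanced to handle the semidirect structure of $G_{\mathrm{combine}}=(S_2^{m}\rtimes S_m)$. As a first step I would record that, with the conventions of Definition~\ref{def:symmetry group}, the element $(\vLambda,\vP)\in G_{\mathrm{combine}}$---where $\vP$ corresponds to $\pi\in S_m$ and $\vLambda=\mathrm{diag}(\lambda_1,\ldots,\lambda_m)$---sends $(a_i,\vw_i)_{i=1}^m$ to $(\lambda_i a_{\pi^{-1}(i)},\,\lambda_i\vw_{\pi^{-1}(i)})_{i=1}^m$; the oddness of $\sigma$ then makes this a genuine symmetry, exactly parallel to the purely-permutation case. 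For each $\vtheta_0=(a_i,\vw_i)_{i=1}^m$ I would assign a canonical pair $(\fP(\vtheta_0),\vgamma(\vtheta_0))$: the block $B_1$ collects the zero neurons, the blocks $B_p$ ($p\geq 2$) are the $\pm$-equivalence classes of the non-zero neurons (i.e., $i,j\in B_p$ iff $(a_i,\vw_i)=\pm(a_j,\vw_j)\neq\vzero$), and in each such $B_p$ the signs $\gamma_i(\vtheta_0)$ are fixed by normalising at a chosen representative, while $\vgamma|_{B_1}$ is assigned arbitrarily. By construction $\vtheta_0\in\fM_{\fP(\vtheta_0),\vgamma(\vtheta_0)}$.

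The core step is an explicit description of $S(\vtheta_0)$. The fixed-point condition $\lambda_i(a_{\pi^{-1}(i)},\vw_{\pi^{-1}(i)})=(a_i,\vw_i)$ forces $\pi$ to preserve every block of $\fP(\vtheta_0)$ setwise, leaves $\lambda_i$ unrestricted for $i\in B_1$, and pins $\lambda_i=\gamma_{\pi^{-1}(i)}(\vtheta_0)\gamma_i(\vtheta_0)$ for $i\in B_p$ with $p\geq 2$. From this description the forward inclusion $\fM_{\fP(\vtheta_0),\vgamma(\vtheta_0)}\subseteq[\vtheta_0]$ is immediate: the same recipe applied to any $\vtheta\in\fM_{\fP(\vtheta_0),\vgamma(\vtheta_0)}$ produces the same stabilizer, since it only sees $\fP$ and the intra-block sign products $\gamma_i\gamma_j$. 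For the reverse inclusion I would read $(\fP(\vtheta_0),\vgamma(\vtheta_0))$ back out of $S(\vtheta_0)$ by probing it with simple elements: restricting to $\pi=\mathrm{id}$ characterises $B_1$ as exactly the set of coordinates on which a sign flip is admissible, while for each pair $i,j$ the transposition $\pi=(i\;j)$ combined with $\lambda_k=+1$ for $k\neq i,j$ lies in $S(\vtheta_0)$ iff $i,j$ share a block, and in that case also pins $\lambda_i=\lambda_j=\gamma_i(\vtheta_0)\gamma_j(\vtheta_0)$.

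What I expect to be the main piece of bookkeeping---rather than a hard obstacle---is pinning down the redundancy in the parametrisation by $(\fP,\vgamma)$. The set $\fM_{\fP,\vgamma}$ is invariant under simultaneous sign flips of $\vgamma$ on any single block $B_p$ with $p\geq 2$ and under arbitrary changes of $\vgamma|_{B_1}$, while sign patterns that differ in any other way yield disjoint sets (e.g., flipping a single $\gamma_{i_0}$ inside some $B_p,\;p\geq 2$ would force $(a_{i_0},\vw_{i_0})=\vzero$, contradicting $i_0\notin B_1$). This is exactly the ambiguity revealed by the stabilizer-recovery step above, so the reverse inclusion $[\vtheta_0]\subseteq\fM_{\fP(\vtheta_0),\vgamma(\vtheta_0)}$ closes cleanly. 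Combining both inclusions with Lemma~\ref{lemma: invariant partition} gives $[\vtheta_0]=\fM_{\fP(\vtheta_0),\vgamma(\vtheta_0)}$, and letting $\vtheta_0$ range over $\sR^{(d+1)m}$ yields the stated identification of the invariant partition.
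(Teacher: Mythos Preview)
Your proposal is correct and follows essentially the same approach as the paper: both compute the stabilizer $S(\vtheta_0)$ explicitly (finding that $\pi$ must preserve the blocks, $\lambda_i$ is free on $B_1$, and $\lambda_i=\gamma_i\gamma_{\pi^{-1}(i)}$ on the other blocks), obtain the forward inclusion from the fact that this description depends only on $(\fP,\vgamma)$, and obtain the reverse inclusion by probing the stabilizer with single sign flips and transpositions. Your additional remarks on the redundancy of the $(\fP,\vgamma)$ parametrisation are correct and harmless extras that the paper does not make explicit.
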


\begin{proof}
 
We provide the proof in Appendix~\ref{sec:proof of SIM induced by the combined}. 
 
\end{proof}

\subsection{Rank of Lie closure}

We begin by presenting a lemma that establishes the foundation for the rank analysis carried out in this subsection.

\begin{lemma}[neuron independence \cite{zhang2022linear}] 
 Let $\sigma: \mathbb{R} \rightarrow \mathbb{R}$ be any analytic function such that $\sigma^{\left(n_j\right)}(0) \neq 0$ for an infinite sequence of distinct indices $\left\{n_j\right\}_{j=1}^{\infty}$. Given $d \in \mathbb{N}$ and $m$ distinct weights $\boldsymbol{w}_1, \ldots, \boldsymbol{w}_m \in \mathbb{R}^d \backslash\{\boldsymbol{0}\}$, such that $\boldsymbol{w}_k \neq \pm \boldsymbol{w}_j$ for all $1 \leqslant k<j \leqslant m$. Then $\left\{\sigma\left(\boldsymbol{w}_i^\T \boldsymbol{x}\right), \sigma^{\prime}\left(\boldsymbol{w}_i^\T \boldsymbol{x}\right) x_1, \ldots, \sigma^{\prime}\left(\boldsymbol{w}_i^\T \boldsymbol{x}\right) x_d\right\}_{i=1}^m$ is a linearly independent function set.

 \label{lemma:neuron independence}
 \end{lemma}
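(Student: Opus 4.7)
The plan is to turn the hypothetical linear dependence into a one-variable identity, extract algebraic constraints from the Taylor series of $\sigma$ at $0$, and conclude with a dominance argument exploiting $\boldsymbol{w}_k \neq \pm \boldsymbol{w}_j$. Suppose for contradiction there exist scalars $\{c_i\}_{i=1}^m$ and vectors $\{\boldsymbol{b}_i\}_{i=1}^m \subset \mathbb{R}^d$ with
$$F(\boldsymbol{x}) := \sum_{i=1}^m \left[c_i \sigma(\boldsymbol{w}_i^\T \boldsymbol{x}) + (\boldsymbol{b}_i^\T \boldsymbol{x})\, \sigma'(\boldsymbol{w}_i^\T \boldsymbol{x})\right] \equiv 0$$
on $\mathbb{R}^d$, with the goal of showing every $c_i$ and $\boldsymbol{b}_i$ vanish. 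First I would choose a generic direction $\boldsymbol{u} \in \mathbb{R}^d$ so that $\alpha_i := \boldsymbol{w}_i^\T \boldsymbol{u}$ are nonzero and the magnitudes $\{|\alpha_i|\}$ are pairwise distinct. The hypothesis $\boldsymbol{w}_i \neq \pm \boldsymbol{w}_j$ makes the finitely many hyperplanes $\{(\boldsymbol{w}_i \pm \boldsymbol{w}_j)^\T \boldsymbol{u} = 0\}$ and $\{\boldsymbol{w}_i^\T \boldsymbol{u} = 0\}$ proper subsets of $\mathbb{R}^d$, so such $\boldsymbol{u}$ exists in an open dense set. Writing $\beta_i = \boldsymbol{b}_i^\T \boldsymbol{u}$ and substituting $\boldsymbol{x} = t\boldsymbol{u}$ reduces $F \equiv 0$ to the scalar identity $\sum_{i=1}^m [c_i \sigma(\alpha_i t) + \beta_i t \sigma'(\alpha_i t)] = 0$ for all $t \in \mathbb{R}$.

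Next I would expand $\sigma(z) = \sum_{n \geq 0} a_n z^n$ with $a_n = \sigma^{(n)}(0)/n!$ and match the coefficient of $t^n$, obtaining
$$a_n \Bigl[\sum_{i=1}^m c_i \alpha_i^n + n \sum_{i=1}^m \beta_i \alpha_i^{n-1}\Bigr] = 0 \quad \text{for every } n \geq 0.$$
By hypothesis $a_{n_j} \neq 0$ on an infinite sequence of distinct indices, which being distinct integers must tend to infinity; restricting to these indices gives $S_{n_j} := \sum_{i=1}^m (c_i + n_j \beta_i/\alpha_i)\,\alpha_i^{n_j} = 0$. Relabel so that $|\alpha_1| > |\alpha_2| > \cdots > |\alpha_m|$ and divide by $\alpha_1^{n_j}$: the terms for $i \geq 2$ carry a factor $(\alpha_i/\alpha_1)^{n_j}$ that decays exponentially and dominates its polynomial prefactor in $n_j$, so they vanish as $j \to \infty$. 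The resulting limit forces the linear polynomial $c_1 + (n_j/\alpha_1)\beta_1$ to tend to $0$ along $n_j \to \infty$, which is possible only if $c_1 = \beta_1 = 0$. Iterating this dominance step on the remaining indices yields $c_i = \beta_i = 0$ for every $i$. Since the scalars $c_i$ do not depend on $\boldsymbol{u}$, they already vanish; the condition $\boldsymbol{b}_i^\T \boldsymbol{u} = 0$ for all $\boldsymbol{u}$ in an open dense subset of $\mathbb{R}^d$ then forces $\boldsymbol{b}_i = \boldsymbol{0}$, closing the argument.

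The main obstacle is engineering $\boldsymbol{u}$ so that $\{|\alpha_i|\}$ are pairwise distinct. Without this separation the dominance step collapses: if $\alpha_i = -\alpha_j$ for some pair, the ratio $(\alpha_i/\alpha_j)^{n_j}$ alternates in sign rather than decays, and no leading term can be isolated. The signed-distinctness hypothesis $\boldsymbol{w}_i \neq \pm \boldsymbol{w}_j$ is precisely what lets a generic $\boldsymbol{u}$ avoid this collapse. A secondary subtlety is that the identities are extracted only along the sparse subsequence $\{n_j\}$ on which $a_{n_j} \neq 0$, rather than for every $n$; this is harmless because $n_j \to \infty$ suffices for the dominance argument, and a polynomial in $n$ that tends to zero along an unbounded integer sequence must vanish identically.
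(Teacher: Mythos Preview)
The paper does not supply its own proof of this lemma; it is quoted verbatim from \citet{zhang2022linear} and used as a black box. There is therefore no in-paper argument to compare against.

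Your argument is correct and self-contained. The reduction to a single direction $\boldsymbol{u}$ is legitimate precisely because the hypotheses $\boldsymbol{w}_i\neq 0$ and $\boldsymbol{w}_i\neq\pm\boldsymbol{w}_j$ make the bad set a finite union of proper hyperplanes; the Taylor-coefficient extraction is clean; and the dominance step works because $|\alpha_i/\alpha_1|<1$ kills the exponentially decaying tail against the merely linear growth of the prefactor $c_i+n_j\beta_i/\alpha_i$. The only point worth stating slightly more explicitly is that the iteration yields $c_i=0$ and $\boldsymbol{b}_i^{\T}\boldsymbol{u}=0$ for \emph{every} index $i$ simultaneously (the relabeling by $|\alpha_i|$ is internal to the argument for a fixed $\boldsymbol{u}$), so that when you then vary $\boldsymbol{u}$ over the open dense set the conclusion $\boldsymbol{b}_i=\boldsymbol{0}$ follows for each $i$ without ambiguity. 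With that remark, the proof is complete.
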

The linear independence established in Lemma~\ref{lemma:neuron independence} holds only when parameters satisfy certain conditions (e.g., $\vw_i=\pm \vw_j$
 ). We formally define parameters that do not meet these conditions or have some $a_i=0$ as \textbf{degenerate} in Definition~\ref{def:degenerate}. The definition of degeneracy is designed to ensure  the Lie closure of $\fF$ at non-degenerate $\vtheta$ has full rank, as stated in Corollary~\ref{cor:lie rank of non-degenerate}.

\begin{definition}[degenerate and non-degenerate]
Consider the two-layer neural network. 
For $\vtheta=(a_i,\vw_i)_{i=1}^m$, if $\vtheta$ satisfies (i): $a_k\neq 0 , \vw_k\neq \vzero$ for all $ k\in\{1,\ldots,m\}$, and (ii): $\vw_i\neq \pm \vw_j$ for any $i,j\in\{1,\ldots,m\}$ and $i\neq j$, then  $\vtheta$ is said to be \textbf{non-degenerate}. Otherwise $\vtheta$ is said to be  \textbf{degenerate}.   \label{def:degenerate}
\end{definition}

\begin{corollary}
Consider the two-layer neural network, and suppose that \( \vtheta \in \sR^M \) is non-degenerate. Then \( \dim(\mathrm{Lie}_{\vtheta}(\fF)) = (d+1)m \).

\label{cor:lie rank of non-degenerate}
\end{corollary}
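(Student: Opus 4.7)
The plan is to establish something stronger than required: $\mathrm{span}(\fF|_{\vtheta}) = \mathbb{R}^{(d+1)m}$ at every non-degenerate $\vtheta$. Since $\mathrm{span}(\fF|_{\vtheta}) \subseteq \mathrm{Lie}_{\vtheta}(\fF) \subseteq T_{\vtheta}\mathbb{R}^{(d+1)m}$, this immediately gives $\dim(\mathrm{Lie}_{\vtheta}(\fF)) = (d+1)m$. No higher-order Lie brackets need to be computed — the $\vx$-parameter family $\fF$ is already ``rich enough'' at non-degenerate points.

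First I would write out the gradients explicitly: for $F(\vtheta)(\vx) = \sum_{i=1}^{m} a_i \sigma(\vw_i^{\T}\vx)$, the components of $\nabla_{\vtheta} F(\vtheta)(\vx)$ are
$\partial F/\partial a_i = \sigma(\vw_i^{\T}\vx)$ and $\partial F/\partial \vw_i = a_i \sigma'(\vw_i^{\T}\vx)\vx$. Next, I would mimic the proof strategy of Proposition~\ref{example:linear}: suppose, for contradiction, that there exists a nonzero covector $\vc = (\alpha_i, \vbeta_i)_{i=1}^{m} \in \mathbb{R}^{(d+1)m}$ orthogonal to $\fF|_{\vtheta}$. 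Testing against every $\vx \in \mathbb{R}^d$ yields the functional identity
\begin{equation*}
\sum_{i=1}^{m} \alpha_i \sigma(\vw_i^{\T}\vx) + \sum_{i=1}^{m}\sum_{j=1}^{d} a_i \beta_{ij}\, \sigma'(\vw_i^{\T}\vx)\, x_j \equiv 0.
\end{equation*}

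To conclude, I would apply the neuron independence result, Lemma~\ref{lemma:neuron independence}. Its hypotheses are met because (a) $\sigma$ is a non-polynomial real-analytic function by Definition~\ref{def:neural network}, so its Taylor expansion at $0$ contains infinitely many nonzero terms; (b) by non-degeneracy of $\vtheta$ (Definition~\ref{def:degenerate}), each $\vw_i \neq \vzero$ and $\vw_i \neq \pm \vw_j$ for $i \neq j$. The lemma then gives that $\{\sigma(\vw_i^{\T}\vx),\, \sigma'(\vw_i^{\T}\vx) x_1,\, \ldots,\, \sigma'(\vw_i^{\T}\vx) x_d\}_{i=1}^{m}$ is linearly independent as a set of functions of $\vx$. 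Matching coefficients in the identity above forces $\alpha_i = 0$ and $a_i \beta_{ij} = 0$ for all $i,j$; using the remaining non-degeneracy condition $a_i \neq 0$, we obtain $\beta_{ij}=0$, contradicting $\vc \neq \vzero$.

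I do not anticipate a serious obstacle here — the argument is essentially a direct application of Lemma~\ref{lemma:neuron independence}. The one subtlety worth flagging is the implicit verification that a non-polynomial real-analytic $\sigma$ satisfies the infinite-sequence-of-nonzero-derivatives hypothesis, since otherwise its Taylor series at $0$ would terminate and $\sigma$ would be polynomial. This keeps the proof self-contained under the model's standing assumptions.
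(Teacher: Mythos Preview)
Your proposal is correct and follows essentially the same argument as the paper: both show $\mathrm{span}(\fF|_{\vtheta})=\mathbb{R}^{(d+1)m}$ by assuming a nonzero orthogonal covector and deriving a contradiction from Lemma~\ref{lemma:neuron independence}, then sandwiching $\mathrm{Lie}_{\vtheta}(\fF)$ between $\mathrm{span}(\fF|_{\vtheta})$ and $\mathbb{R}^{(d+1)m}$. Your added remark that a non-polynomial real-analytic $\sigma$ must have infinitely many nonzero Taylor coefficients at $0$ is a helpful check the paper leaves implicit.
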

\begin{proof}
Let $\vtheta = (a_i, \vw_i)_{i=1}^m$ be non-degenerate. By calculation,
$
\nabla_{\vtheta} F(\vtheta)(\vx) = \left( \sigma(\vw_i^\T\vx), a_i\sigma'(\vw_i^\T\vx)x_1, \ldots, a_i\sigma'(\vw_i^\T\vx)x_d\right)_{i=1}^m.
$
To show that $\dim(\mathrm{Lie}_{\vtheta}(\fF)) = (d+1)m$, we will prove that the set of vectors $U := \{ \nabla_{\vtheta} F(\vtheta)(\vx) \mid \vx \in \mathbb{R}^d \}$ spans the entire parameter space $\mathbb{R}^{(d+1)m}$.

We proceed by contradiction. Assume that $\mathrm{span}(U)$ is a proper subspace of $\mathbb{R}^{(d+1)m}$. Then there must exist a non-zero constant vector $\vc = (c_1, \mathbf{v}_1^\T, \ldots, c_m, \mathbf{v}_m^\T)^\T \in \mathbb{R}^{(d+1)m}$, where $c_i \in \mathbb{R}$ and $\mathbf{v}_i=(v_{i,1}, \ldots, v_{i,d}) \in \mathbb{R}^d$, that is orthogonal to every vector in $U$. This orthogonality condition, $\vc^\T \nabla_{\vtheta} F(\vtheta)(\vx) = 0$ for all $\vx \in \mathbb{R}^d$, expands to:
$$
\sum_{i=1}^m \left( c_i \sigma(\vw_i^\T \vx) + \sum_{j=1}^d (a_i v_{i,j}) \sigma'(\vw_i^\T \vx) x_j \right) = 0.
$$
This equation is a linear combination of the functions in the set $\left\{\sigma\left(\boldsymbol{w}_i^\T \boldsymbol{x}\right), \sigma^{\prime}\left(\boldsymbol{w}_i^\T \boldsymbol{x}\right) x_1, \ldots, \sigma^{\prime}\left(\boldsymbol{w}_i^\T \boldsymbol{x}\right) x_d\right\}_{i=1}^m$ that is identically zero for all $\vx \in \mathbb{R}^d$.

Since $\vtheta$ is non-degenerate, Lemma~\ref{lemma:neuron independence} guarantees that this set of functions is linearly independent. Consequently, all coefficients of the linear combination must be zero. This implies:
\begin{enumerate}
    \item $c_i = 0$ for all $i=1, \ldots, m$.
    \item $a_i v_{i,j} = 0$ for all $i=1, \ldots, m$ and $j=1, \ldots, d$.
\end{enumerate}
The non-degeneracy of $\vtheta$ ensures that $a_i \neq 0$ for all $i=1,\ldots,m$. From the second point, we must have $v_{i,j} = 0$ for all $i=1,\ldots,m, j=1,\ldots,d$, which means $\mathbf{v}_i = \mathbf{0}$ for all $i=1,\ldots,m$. This implies that the entire vector $\vc$ is the zero vector, which contradicts our assumption that $\vc$ was non-zero.

Therefore, the initial assumption must be false, and $\mathrm{span}(U) = \mathbb{R}^{(d+1)m}$. Since $\mathrm{span}(U)\subset \mathrm{Lie}_{\vtheta}(\fF)\subset \sR^{(d+1)m}$, $\dim(\mathrm{Lie}_{\vtheta}(\fF)) = (d+1)m$.
\end{proof}

\begin{remark}
Theorem~\ref{thm:all symmetry of neural network} implies that the degenerate case of $\vtheta$ can give rise to SIMs. For completeness, we provide its proof here.
\begin{enumerate}
\item For any $i,j\in\{1,\ldots,m\}$,  the set \(\{(a_k,\vw_k)_{k=1}^m \in \sR^{(d+1)m}\mid (a_i, \vw_i) = (a_j, \vw_j)\}\) is a SIM.
    \item If \(\sigma(x)\) is an odd function, then for any $i,j\in\{1,\ldots,m\}$,  the set \(\{(a_k,\vw_k)_{k=1}^m \in \sR^{(d+1)m}\mid (a_i, \vw_i) = -(a_j, \vw_j)\}\) is a SIM.
    \item  If \(\sigma(x)\) is an even function, then for any $i,j\in\{1,\ldots,m\}$,  the set \(\{(a_k,\vw_k)_{k=1}^m \in \sR^{(d+1)m}\mid (a_i, \vw_i) = (a_j, -\vw_j)\}\) is a SIM.
    \item If \(\sigma(0) = 0\), then for any $i\in\{1,\ldots,m\}$, the set \(\{(a_k,\vw_k)_{k=1}^m \in \sR^{(d+1)m}\mid(a_i, \vw_i) = \vzero\}\) is a  SIM.
    \item If \(\sigma'(0) = 0\), then for any $i\in\{1,\ldots,m\}$,  the set $\{(a_k,\vw_k)_{k=1}^m \in \sR^{(d+1)m}\mid \vw_i = \vzero\}$ is a SIM.
    
\end{enumerate}
\label{rmk:sim related to sigma}
\end{remark}

\begin{proof}
We prove the five statements item by item. The procedure is the same for each: we denote the set in question as a manifold $\fM$ and then verify that it is a SIM. Since each of these five sets is a linear subspace of  $\sR^{(d+1)m}$, by Lemma~\ref{lemma:sim tangent}, we only need to show that for any parameter $\vtheta \in \fM$ and any input $\vx \in \sR^d$, the gradient $\nabla_{\vtheta}F(\vtheta)(\vx)$ also lies in the tangent space of $\fM$, which for a linear subspace means $\nabla_{\vtheta}F(\vtheta)(\vx) \in \fM$.
For any $\vtheta=(a_k,\vw_k)_{k=1}^m \in \fM$ and $\vx \in \sR^d$, the $i$-th component of the gradient (corresponding to the parameters $(a_i, \vw_i)$ of neuron $i$) is given by:
\[
\nabla_i F(\vtheta)(\vx) = (\sigma(\vw_i^\T\vx), a_i\sigma'(\vw_i^\T\vx)\vx^\T).
\]
We now analyze each item:
\begin{enumerate}
    \item Define $\fM = \{(a_k,\vw_k)_{k=1}^m \mid (a_i, \vw_i) = (a_j, \vw_j)\}$.
    If $\vtheta \in \fM$, we have $(a_i, \vw_i) = (a_j, \vw_j)$. This directly implies that their corresponding gradient components are equal:
    \[
    \nabla_i F(\vtheta)(\vx) = (\sigma(\vw_i^\T\vx), a_i\sigma'(\vw_i^\T\vx)\vx^\T) = (\sigma(\vw_j^\T\vx), a_j\sigma'(\vw_j^\T\vx)\vx^\T) = \nabla_j F(\vtheta)(\vx).
    \]
    Thus, $\nabla_{\vtheta}F(\vtheta)(\vx) \in \fM$.

    \item Assume $\sigma(x)$ is an odd function, and define $\fM = \{(a_k,\vw_k)_{k=1}^m \mid (a_i, \vw_i) = -(a_j, \vw_j)\}$.
    We use the property that the derivative of an odd function is an even function, i.e., $\sigma'(-z) = \sigma'(z)$. If $\vtheta \in \fM$, we have $a_i = -a_j$ and $\vw_i = -\vw_j$. The $i$-th component of the gradient is:
    \begin{align*}
    \nabla_i F(\vtheta)(\vx) &= (\sigma(\vw_i^\T\vx), a_i\sigma'(\vw_i^\T\vx)\vx^\T) \\
    &= (\sigma(-\vw_j^\T\vx), -a_j\sigma'(-\vw_j^\T\vx)\vx^\T) \\
    &= (-\sigma(\vw_j^\T\vx), -a_j\sigma'(\vw_j^\T\vx)\vx^\T) && \text{(since $\sigma$ is odd, $\sigma'$ is even)} \\
    &= -(\sigma(\vw_j^\T\vx), a_j\sigma'(\vw_j^\T\vx)\vx^\T) = -\nabla_j F(\vtheta)(\vx).
    \end{align*}
    Thus, $\nabla_{\vtheta}F(\vtheta)(\vx) \in \fM$.

    \item Assume $\sigma(x)$ is an even function, and define $\fM = \{(a_k,\vw_k)_{k=1}^m \mid (a_i, \vw_i) = (a_j, -\vw_j)\}$.
     If $\vtheta \in \fM$, we have $a_i = a_j$ and $\vw_i = -\vw_j$. Let $\nabla_k F = (\nabla_{a_k}F, \nabla_{\vw_k}F)$. Then we have:
    \begin{align*}
    \nabla_i F(\vtheta)(\vx) &= (\sigma(\vw_i^\T\vx), a_i\sigma'(\vw_i^\T\vx)\vx^\T) \\
    &= (\sigma(-\vw_j^\T\vx), a_j\sigma'(-\vw_j^\T\vx)\vx^\T) \\
    &= (\sigma(\vw_j^\T\vx), a_j(-\sigma'(\vw_j^\T\vx))\vx^\T) && \text{(since $\sigma$ is even, $\sigma'$ is odd)} \\
    &= (\nabla_{a_j}F, -\nabla_{\vw_j}F).
    \end{align*}
    Thus, $\nabla_{\vtheta}F(\vtheta)(\vx) \in \fM$.

    \item Assume $\sigma(0) = 0$, and define $\fM = \{(a_k,\vw_k)_{k=1}^m \mid (a_i, \vw_i) = \vzero\}$.
    If $\vtheta \in \fM$, we have $(a_i, \vw_i) = (0, \vzero)$. The $i$-th component of the gradient is:
    \[
    \nabla_i F(\vtheta)(\vx) = (\sigma(\vzero^\T\vx), 0 \cdot \sigma'(\vzero^\T\vx)\vx^\T) = (\sigma(0), \vzero).
    \]
    Given the condition $\sigma(0) = 0$, this becomes $(0, \vzero)$. Thus, $\nabla_{\vtheta}F(\vtheta)(\vx) \in \fM$.

    \item Assume $\sigma'(0) = 0$, and define $\fM = \{(a_k,\vw_k)_{k=1}^m \mid \vw_i = \vzero\}$.
    If $\vtheta \in \fM$, we have $\vw_i = \vzero$. We only need to examine the component of the gradient corresponding to $\vw_i$:
    \[
    \nabla_{\vw_i}F(\vtheta)(\vx) = a_i\sigma'(\vw_i^\T\vx)\vx^\T = a_i\sigma'(\vzero^\T\vx)\vx^\T = a_i\sigma'(0)\vx^\T.
    \]
    Given the condition $\sigma'(0) = 0$, the expression becomes $a_i \cdot 0 \cdot \vx^\T = \vzero$. This satisfies the condition for the gradient vector to be in $\fM$.
\end{enumerate}
\end{proof}

To deal with the degenerate case, we introduce Lemma~\ref{lemma:pertubation_new}, which allows us to perturb those degenerate $\vtheta$ to non-degenerate  whenever possible.

\begin{lemma}[perturbation lemma]
Consider the two-layer neural network.
For $\vtheta^*=(a_i^*,\vw_i^*)_{i=1}^m$, the following statement holds:

\begin{enumerate}
    \item Assume $i\in\{1,\ldots,m\}$ and  $a_i^*=0$. Then $$\exists \delta>0, B_{\delta}(\vtheta^*)\cap   O_\fF(\vtheta^*) \subset \{(a_i,\vw_i)_{i=1}^m \mid a_i=0\} \Longleftrightarrow \vw_i^*=\vzero, \sigma(0)=0.$$
    \item Assume $i\in\{1,\ldots,m\}$ and  $\vw_i^*=\vzero, a_i^*\neq 0$. Then $$\exists \delta>0, B_{\delta}(\vtheta^*)\cap O_\fF(\vtheta^*) \subset \{(a_i,\vw_i)_{i=1}^m \mid \vw_i=\vzero\} \Longleftrightarrow \sigma'(0)=0.$$
    \item Assume $i,j\in\{1,\ldots,m\}$ and  $\vw_i^*=\vw_j^*\neq \vzero $. Then
$$\exists \delta>0, B_{\delta}(\vtheta^*)\cap O_\fF(\vtheta^*) \subset \{(a_i,\vw_i)_{i=1}^m \mid \vw_i=\vw_j\} \Longleftrightarrow a_i^*=a_j^*.$$
\item   Assume $i,j\in\{1,\ldots,m\}$ and  $\vw_i^*=-\vw_j^*\neq \vzero $. Then
\begin{align*}
&\exists \delta>0, B_{\delta}(\vtheta^*)\cap O_\fF(\vtheta^*) \subset \{(a_i,\vw_i)_{i=1}^m \mid \vw_i=-\vw_j\} \\
&\Longleftrightarrow \text{$a_i^*=-a_j^*,\sigma(x)$ is odd \text{ } or \text{ } $ a_i^*=a_j^*,\sigma(x)$ is even.}
\end{align*}
\end{enumerate}
Here, $B_{\delta}(\vtheta^*)$ denotes the open $\delta$-ball around $\vtheta^*$.
\label{lemma:pertubation_new}    
\end{lemma}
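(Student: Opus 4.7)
The four parts share a common two-step template. For the backward ($\Leftarrow$) direction, the RHS conditions place $\vtheta^*$ on one of the explicit SIMs listed in Remark~\ref{rmk:sim related to sigma}, and that SIM is a subset of the corresponding target set; since $O_{\fF}(\vtheta^*)$ is contained in any SIM containing $\vtheta^*$ by Theorem~\ref{def:sim}, I immediately obtain $B_{\delta}(\vtheta^*) \cap O_{\fF}(\vtheta^*) \subset \text{target}$ for every $\delta > 0$. For the forward ($\Rightarrow$) direction I argue by contrapositive: assuming the RHS fails, I exhibit an $\vx_0 \in \sR^d$ such that the single-vector-field flow $\vtheta(t) := e^{t \nabla_{\vtheta} F(\cdot)(\vx_0)}(\vtheta^*)$, which lies in $O_{\fF}(\vtheta^*)$ by construction, leaves the target set for arbitrarily small $t > 0$; continuity then places $\vtheta(t)$ inside $B_{\delta}(\vtheta^*) \cap O_{\fF}(\vtheta^*)$ but outside the target set for $t$ small enough.

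\textbf{First-order cases.} The computations rest on $\nabla_{a_i} F(\vtheta)(\vx) = \sigma(\vw_i^{\T} \vx)$ and $\nabla_{\vw_i} F(\vtheta)(\vx) = a_i\sigma'(\vw_i^{\T}\vx)\vx$, together with the non-polynomial analyticity of $\sigma$ (hence of $\sigma'$), which guarantees that either function takes nonzero values somewhere. I will compute the leading-order escape directly: $\sigma(\vw_i^{*\T}\vx_0)$ for (1), $a_i^*\sigma'(0)\vx_0$ for (2), and $(a_i^* - a_j^*)\sigma'(\vw_i^{*\T}\vx_0)\vx_0$ for (3); in each case one can pick $\vx_0$ making this expression nonzero precisely when the corresponding RHS fails.

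\textbf{Case (4), the main obstacle.} Case (4) is the delicate one. The first-order escape quantity $(a_i^*\sigma'(t) + a_j^*\sigma'(-t))\vx_0$ with $t = \vw_i^{*\T}\vx_0$ may vanish identically in $\vx_0$. I plan to show that identical vanishing, via the substitution $t \leftrightarrow -t$ and the resulting $2\times 2$ linear system in $\sigma'(\pm t)$, forces $(a_i^*)^2 = (a_j^*)^2$ together with a definite parity for $\sigma'$, so after integration $\sigma$ must be even, odd, or odd-plus-nonzero-constant. The strictly even/odd subcases are absorbed by the RHS of (4); the residual trap is $\sigma = \tilde{\sigma}+C$ with $\tilde{\sigma}$ odd and $C\ne 0$ (hence $\sigma$ neither odd nor even) combined with $a_i^* = \pm a_j^*$, for which I will pass to second order. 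A chain-rule computation along the same flow yields
\begin{equation*}
\tfrac{\rd^{2}}{\rd t^{2}}(\vw_i+\vw_j)\big|_{t=0} = \bigl[\sigma(t_0)\sigma'(t_0) + \sigma(-t_0)\sigma'(-t_0)\bigr]\vx_0 + \bigl((a_i^*)^2 - (a_j^*)^2\bigr)\sigma'(t_0)\sigma''(t_0)\|\vx_0\|^{2}\vx_0,
\end{equation*}
and the constraint $(a_i^*)^2 = (a_j^*)^2$ eliminates the second term while the first reduces to $2C\tilde{\sigma}'(t_0)\vx_0$, which is nonzero at some $t_0$ because $\tilde{\sigma}'$ is a nonzero analytic function; thus $(\vw_i+\vw_j)(t) \sim t^{2}$ supplies the escape. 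Verifying that this odd/even/odd-plus-constant trichotomy is exhaustive and that the second-order term always delivers a genuine escape rather than further cancellation is the main obstacle of the proof.
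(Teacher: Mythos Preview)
Your backward directions and your forward arguments for parts (1)--(3) match the paper's: the paper extracts the same first-order quantities $\sigma(\vw_i^{*\T}\vx)$, $a_i^*\sigma'(0)\vx$, $(a_i^*-a_j^*)\sigma'(\vw_i^{*\T}\vx)\vx$, though it does so directly via the inclusion $T_{\vtheta^*}O_{\fF}(\vtheta^*)=\mathrm{Lie}_{\vtheta^*}(\fF)\subset\fM$ rather than by contrapositive flow.

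For part (4) your plan has a gap. Your $2\times2$ system argument only bites when $(a_i^*,a_j^*)\ne(0,0)$; if $a_i^*=a_j^*=0$ the system is identically zero and imposes \emph{no} parity on $\sigma'$, so the trichotomy ``even / odd / odd-plus-constant'' does not follow. This sub-case is live under your contrapositive: with $a_i^*=a_j^*=0$ the RHS fails exactly when $\sigma$ is neither even nor odd, and such a $\sigma$ need not be of the form $\tilde\sigma+C$ with $\tilde\sigma$ odd. Your second-order computation still saves you, but not via the $2C\tilde\sigma'(t_0)$ reduction: the $a^2$-term vanishes trivially, and the remaining bracket $\sigma(t_0)\sigma'(t_0)+\sigma(-t_0)\sigma'(-t_0)=\tfrac{1}{2}\bigl[(\sigma^2)'(t_0)+(\sigma^2)'(-t_0)\bigr]$ is identically zero in $t_0$ iff $(\sigma^2)'$ is odd iff $\sigma^2$ is even, which by analyticity forces $\sigma$ itself to be even or odd. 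So the fix is to drop the trichotomy and argue uniformly that whenever $(a_i^*)^2=(a_j^*)^2$ and $\sigma$ is neither even nor odd, this bracket is nonzero at some $t_0$.

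The paper avoids second derivatives entirely. From the first-order condition $a_i^*\sigma'(t)+a_j^*\sigma'(-t)\equiv0$ it integrates and derives $(a_i^*+a_j^*)(\sigma(t)-\sigma(-t))\equiv0$; if $\sigma$ is not even this gives $a_i^*+a_j^*=0$. It then \emph{bootstraps}: the same derivation applies at every nearby $\vtheta'\in B_{\delta'}(\vtheta^*)\cap O_{\fF}(\vtheta^*)$ (which also satisfies $\vw_i'=-\vw_j'\ne\vzero$ by the LHS assumption and small $\delta'$), so locally $O_{\fF}(\vtheta^*)\subset\{a_i+a_j=0\}$. Feeding this new linear constraint back into the tangent-space inclusion yields $\dot a_i+\dot a_j=0$ at $\vtheta^*$, i.e.\ $\sigma(t)+\sigma(-t)\equiv0$, and $\sigma$ is odd. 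This route handles $a_i^*=a_j^*=0$ without any case split and never leaves first order.
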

\begin{proof}
In each case, the goal is to characterize when the intersection \(B_{\delta}(\vtheta^*) \cap O_{\fF}(\vtheta^*)\) is contained within a specific linear subspace \(\fM\). 
Since
 \( O_{\fF}(\vtheta^*) \) is  immersed submanifold of \( \sR^M \), it follows that for all \( \vtheta \in B_{\delta}(\vtheta^*) \cap O_{\fF}(\vtheta^*) \), the tangent space \( T_{\vtheta}(B_{\delta}(\vtheta^*) \cap O_{\fF}(\vtheta^*)) = T_{\vtheta} O_{\fF}(\vtheta^*) \) is contained in \( T_{\vtheta} \fM=\fM \). By Theorem~\ref{thm:Hermann-Nagano}, \( T_{\vtheta} O_{\fF}(\vtheta^*) = \mathrm{Lie}_{\vtheta}(\fF) \), hence \( \mathrm{Lie}_{\vtheta}(\fF) \subset \fM \) for all \( \vtheta \in B_{\delta}(\vtheta^*) \cap O_{\fF}(\vtheta^*) \). 
 Since $\nabla_{\vtheta}F(\vtheta)(\vx)\in \mathrm{Lie}_{\vtheta} (\fF),\forall \vtheta\in \sR^M,\vx\in \sR^d$, we have
$\nabla_{\vtheta}F(\vtheta)(\vx)\in \fM,\forall \vtheta\in B_\delta(\vtheta^*)\cap O_{\fF}(\vtheta^*),\vx\in\sR^d$.
Particularly $\nabla_{\vtheta}F(\vtheta^*)(\vx)\in \fM, \forall \vx\in \sR^d$.
We analyze this condition for the four statements to be proved. For simplicity we use $\vtheta$ to denote $(a_i,\vw_i)_{i=1}^m\in \sR^{(d+1)m}$.

\textbf{(i)}
$\Longrightarrow$: Assume $O_\fF(\vtheta^*) \subset \{ \vtheta \mid a_i=0 \}$.
The condition $\nabla_{\vtheta}F(\vtheta^*)(\vx)\in  \{ \vtheta \mid a_i=0 \}, \forall \vx\in \sR^d$
indicates that $\sigma(\vw_i^{*\T} \vx) = 0$ for all $\vx \in \mathbb{R}^d$. Since $\sigma$ is a real, non-polynomial, analytic function, this can only hold if $\vw^*=\vzero$ and $\sigma(0)=0$.

$\Longleftarrow$: Assume $\vw_i^*=\vzero$ and $\sigma(0)=0$. By Remark~\ref{rmk:sim related to sigma}, when $\sigma(0)=0$, $\{ \vtheta \mid (a_i,\vw_i)=\vzero \}$ is a SIM. Therefore $O_\fF(\vtheta^*) \subset  \{ \vtheta \mid (a_i,\vw_i)=\vzero \}\subset \{ \vtheta \mid a_i=0 \}$.

\textbf{(ii)} $\Longrightarrow$: Assume $O_\fF(\vtheta^*) \subset \{ \vtheta \mid \vw_i=\vzero \}$. This requires $\dot{\vw}_i = a_i \sigma'(\vw_i^\T \vx) \vx$ to be zero at $\vtheta^*$ for all $\vx\in \sR^d$. So $a_i^* \sigma'(\vw_i^{*\T} \vx) \vx = \vzero$ for all $\vx \in \mathbb{R}^d$. Given $\vw_i^*=\vzero$, the equation becomes $a_i^* \sigma'(0) \vx = \vzero$. Since $a_i^* \neq 0$ and this must hold for all $\vx$, it follows that $\sigma'(0)=0$.

$\Longleftarrow$: Assume $\vw_i^*=\vzero$ and $\sigma'(0)=0$.  By Remark~\ref{rmk:sim related to sigma}, when $\sigma'(0)=0$, $\{ \vtheta \mid \vw_i=\vzero \}$ is a SIM. Therefore $O_\fF(\vtheta^*) \subset \{ \vtheta \mid \vw_i=\vzero \}$.

\textbf{(iii)}
$\Longrightarrow$ Assume $O_\fF(\vtheta^*) \subset \{ \vtheta \mid \vw_i=\vw_j \}$. Then $\dot{\vw}_i = \dot{\vw}_j$ at $\vtheta^*$ for all  $\vx\in \sR^d$. This means $a_i^* \sigma'(\vw_i^{*\T} \vx) \vx = a_j^* \sigma'(\vw_j^{*\T} \vx) \vx,\forall \vx\in \sR^d$. As $\vw_i^* = \vw_j^*$, this simplifies to $(a_i^*-a_j^*) \sigma'(\vw_i^{*\T} \vx) \vx = \vzero,\forall \vx\in \sR^d$ . Since $\vw_i^* \neq \vzero$ and $\sigma$ is not a zero function, the function $\sigma'(\vw_i^{*\T} \vx)\vx$ is not identically zero. Thus, we must have $a_i^*=a_j^*$.

$\Longleftarrow$: Assume $a_i^*=a_j^*$ and $\vw_i^*=\vw_j^*$. By the permutation symmetry introduced in Remark~\ref{rmk:sim related to sigma}, the set $\{\vtheta\mid a_i=a_j,\vw_i=\vw_j\}$ is a SIM. Therefore $O_\fF(\vtheta^*) \subset \{ \vtheta \mid a_i=a_j,\vw_i=\vw_j \}\subset \{\vtheta \mid \vw_i=\vw_j \}$.

\textbf{(iv)}
$\Longrightarrow$: Assume $O_\fF(\vtheta^*) \subset \{ \vtheta \mid \vw_i=-\vw_j \}$. Given $\vw_i^* = -\vw_j^*$, it requires $\dot{\vw}_i = -\dot{\vw}_j$ at $\vtheta^*$. This implies $a_i^* \sigma'(\vw_i^{*\T} \vx) \vx = -a_j^* \sigma'(\vw_j^{*\T} \vx) \vx = -a_j^* \sigma'(-\vw_i^{*\T} \vx) \vx$. Since this must hold for all $\vx\in \sR^d$ and $\vw_i^*\neq \vzero$, it follows that $a_i^* \sigma'(t) + a_j^* \sigma'(-t) = 0$ for all $t \in \mathbb{R}$. Integrating with respect to $t$ yields $a_i^* \sigma(t) - a_j^* \sigma(-t) = C$ for some constant $C$. Replacing $t$ with $-t$ gives $a_i^* \sigma(-t) - a_j^* \sigma(t) = C$. Equating the two expressions gives $(a_i^*+a_j^*)(\sigma(t)-\sigma(-t)) = 0$.

This implies two cases. First, if $\sigma(t) = \sigma(-t)$ for all $t$ (i.e., $\sigma$ is an even function), then $\sigma'$ is odd. The condition $a_i^* \sigma'(t) + a_j^* \sigma'(-t) = 0$ becomes $(a_i^*-a_j^*)\sigma'(t)=0$. As $\sigma'$ is not identically zero, we must have $a_i^*=a_j^*$.

Second, if $\sigma$ is not an even function, then we must have $a_i^* + a_j^* = 0$. Choose $0<\delta'<\delta$ such that for all $\vtheta'=(a_i',\vw_i')_{i=1}^m\in B_{\delta'}(\vtheta^*)$, we have $\vw_i'\neq \vzero$ and $\vw_j' \neq \vzero$. For any $\vtheta'\in B_{\delta'}(\vtheta^*)\cap O_{\fF}(\vtheta^*)$, there exists $\delta''$ such that $B_{\delta''}(\vtheta')\subset B_{\delta}(\vtheta^*)$. Thus, $B_{\delta''}(\vtheta')\cap O_{\fF}(\vtheta')\subset B_{\delta}(\vtheta^*)\cap O_{\fF}(\vtheta') =B_{\delta}(\vtheta^*)\cap O_{\fF}(\vtheta^*)
\subset \{\vtheta\mid \vw_i=-\vw_j\}$. Since $\sigma$ is not an even function, the same derivation implies $a_i'+a_j'=0$. Therefore, $B_{\delta'}(\vtheta^*)\cap O_{\fF}(\vtheta^*)\subset \{\vtheta\mid a_i=-a_j\}$. This implies $\dot{a_i}+\dot{a_j}=0$ at $\vtheta^*$ for any input $\vx\in \sR^d$. Thus, $\sigma(\vw_i^{*^\T}\vx)+\sigma(-\vw_i^{*^\T}\vx)=0$ for all $\vx\in \sR^d$. Since $\vw_i^{*}\neq \vzero$, $\sigma$ must be an odd function.

$\Longleftarrow$: Assume $\sigma$ is odd and $a_i^*=-a_j^*$, $\vw_i^*=-\vw_j^*$. By Remark~\ref{rmk:sim related to sigma} the submanifold $\{ \vtheta \mid a_i=-a_j, \vw_i=-\vw_j \}$ is a SIM. Therefore $O_\fF(\vtheta^*) \subset \{ \vtheta \mid a_i=-a_j, \vw_i=-\vw_j \} \subset\{ \vtheta \mid \vw_i=-\vw_j \}$. The case when $\sigma$ is even and $a_i^*=a_j^*$, $\vw_i^*=-\vw_j^*$ is similar.

\end{proof}

\begin{remark}
Lemma~\ref{lemma:pertubation_new} establishes that the \(\vtheta^*\) can be perturbed arbitrarily slightly outside the specified subset. For example, in the first case, if \(\vw_i^* \neq \vzero\) or \(\sigma(0) \neq 0\), then for any \(\epsilon > 0\), there exists a perturbed parameter  \(\vtheta' \in O_{\fF}(\vtheta^*)\) such that \(\|\vtheta' - \vtheta^*\|_2 < \epsilon\) and \(\vtheta' \notin \{(a_i, \vw_i)_{i=1}^m \mid a_i = 0\}\). This is why we refer to it as the perturbation lemma.
\end{remark}

As observed in Remark~\ref{rmk:sim related to sigma} and Lemma~\ref{lemma:pertubation_new}, there are numerous cases to consider for the activation function \(\sigma(x)\), such as whether \(\sigma(x)\) is odd or even, whether \(\sigma(0) = 0\), and whether \(\sigma'(0) = 0\). To manage this complexity, we focus on two representative types of activation: generic activation  and generic odd activation.

\begin{definition}[generic activation and generic odd activation]
A real analytic function \( \sigma: \mathbb{R} \to \mathbb{R} \) is called a \textbf{generic activation} if it satisfies the following conditions:  
(i): \( \sigma(x) \) is not a polynomial;  
(ii): \( \sigma(x) \) is neither an odd function nor an even function;  
(iii): \( \sigma(0) \neq 0 \) and \( \sigma'(0) \neq 0 \).

Similarly, a real analytic function \( \sigma: \mathbb{R} \to \mathbb{R} \) is called a \textbf{generic odd activation} if it satisfies the following conditions:  
(i): \( \sigma(x) \) is not a polynomial;  
(ii): \( \sigma(x) \) is an odd function;  
(iii): \( \sigma'(0) \neq 0 \).
\label{def:good activation}
\end{definition}

When the activation function is either a generic activation or a generic odd activation, any degenerate parameter \( \vtheta \) with trivial stabilizer subgroup $S(\vtheta)$ can always be perturbed to a non-degenerate one. Once such a perturbation is made, Theorem~\ref{thm:Hermann-Nagano} implies that the Lie closure at \( \vtheta \) has rank equal to \( (d+1)m \). This idea is illustrated in Corollary~\ref{cor:pertube to non-degenerate}.

\begin{corollary}
Consider the two-layer neural network. For  $\vtheta=(a_i,\vw_i)_{i=1}^m \in \sR^{(d+1)m}$, the following holds:
\begin{enumerate}
    \item Assume $\sigma(x)$ is a generic activation, and $(a_i,\vw_i)\neq (a_j,\vw_j)$ for any $i,j\in \{1,\ldots,m\}$ with $i\neq j$. Then there exists $\vtheta'\in O_\fF(\vtheta)$ such that $\vtheta'$ is non-degenerate. Thus, $\mathrm{dim(Lie_{\vtheta}(\fF)})=(d+1)m$.
    \item Assume $\sigma(x)$ is a generic odd activation, and $(a_i,\vw_i)\neq \pm(a_j,\vw_j)$ for any $i,j\in \{1,\ldots,m\}$. Then there exists $\vtheta'\in O_\fF(\vtheta)$ such that $\vtheta'$ is non-degenerate. Thus, $\mathrm{dim(Lie_{\vtheta}(\fF)})=(d+1)m$.
\end{enumerate}
\label{cor:pertube to non-degenerate}
\end{corollary}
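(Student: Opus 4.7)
The plan is to construct a non-degenerate representative $\vtheta' \in O_\fF(\vtheta)$ inside the same orbit, and then appeal to Hermann--Nagano. Once such a $\vtheta'$ exists, Corollary~\ref{cor:lie rank of non-degenerate} yields $\dim(\mathrm{Lie}_{\vtheta'}(\fF)) = (d+1)m$, and Theorem~\ref{thm:Hermann-Nagano} says that $\dim(\mathrm{Lie}_{\cdot}(\fF))$ is constant along an orbit, so the same equality transfers to $\vtheta$.

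To build $\vtheta'$, I would break the finitely many equalities that constitute degeneracy one at a time, using Lemma~\ref{lemma:pertubation_new} to escape each while remaining inside $O_\fF(\vtheta)$. The key check is that every such equality is escapable under the given hypothesis. In case 1 (generic $\sigma$): part~(i) handles $a_k = 0$ because $\sigma(0) \neq 0$; part~(ii) handles $\vw_k = \vzero$ once $a_k \neq 0$, because $\sigma'(0) \neq 0$; part~(iii) handles $\vw_i = \vw_j$ with $i \neq j$, since the hypothesis $(a_i,\vw_i) \neq (a_j,\vw_j)$ forces $a_i \neq a_j$; and part~(iv) handles $\vw_i = -\vw_j$ unconditionally because $\sigma$ is neither odd nor even. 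In case 2 (generic odd $\sigma$): the hypothesis $(a_i,\vw_i) \neq \pm(a_j,\vw_j)$ applied with $i=j$ rules out $(a_k,\vw_k)=\vzero$, so parts~(i) and (ii) apply using $\vw_k\neq\vzero$ and $\sigma'(0)\neq 0$, respectively; part~(iii) again uses the hypothesis to force $a_i \neq a_j$; and for part~(iv), the hypothesis with $\vw_i = -\vw_j$ gives $a_i \neq -a_j$, which is precisely what the odd-$\sigma$ branch of (iv) requires.

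For the iteration itself, I would exploit that non-degeneracy is an open condition (a finite intersection of complements of closed algebraic sets), and so is each standing hypothesis. Starting from $\vtheta_0 := \vtheta$, at step $n$ I pick any equality still active at $\vtheta_n$, invoke the relevant case of Lemma~\ref{lemma:pertubation_new} to obtain $\vtheta_{n+1} \in B_{\delta_n}(\vtheta_n) \cap O_\fF(\vtheta)$ lying outside that equality, and shrink $\delta_n$ small enough that all previously broken equalities, together with the standing hypothesis, remain broken at $\vtheta_{n+1}$. Since there are only finitely many possible equalities, the process terminates at a non-degenerate $\vtheta' \in O_\fF(\vtheta)$.

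The main technical obstacle is this bookkeeping: a priori, escaping one equality could reintroduce another, but openness of every non-equality condition already achieved (including the hypothesis, whose preservation is required so that the next application of Lemma~\ref{lemma:pertubation_new} is legal) lets us choose each perturbation small enough to prevent such relapses. The only additional inputs are the analytic and symmetry properties of $\sigma$ packaged in Definition~\ref{def:good activation}, which are tailored to negate precisely the obstruction conditions appearing on the right-hand sides of Lemma~\ref{lemma:pertubation_new}(i)--(iv).
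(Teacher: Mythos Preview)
Your proposal is correct and follows essentially the same approach as the paper's proof: sequentially invoke the four cases of Lemma~\ref{lemma:pertubation_new} to escape each degeneracy while staying in the orbit, then transfer the full-rank conclusion from the non-degenerate $\vtheta'$ back to $\vtheta$ via Corollary~\ref{cor:lie rank of non-degenerate} and Theorem~\ref{thm:Hermann-Nagano}. Your treatment of the bookkeeping (using openness to prevent relapses and to preserve the standing hypothesis) is in fact more explicit than the paper's, which handles the same issue informally by saying each perturbation ``can be arbitrarily close'' to the previous point.
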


\begin{proof}
(i) Assume that $\sigma(x)$ is generic activation, and assume $(a_i,\vw_i)\neq (a_j,\vw_j)$ for any $i,j\in \{1,\ldots,m\}, i\neq j$. We now prove that there exists $\vtheta'\in O_\fF(\vtheta)$ such that $\vtheta'$ is non-degenerate.
Since $\sigma(0)\neq 0$, by the first statement of  Lemma~\ref{lemma:pertubation_new}, there exists $\vtheta_1=(a_i^1,\vw_i^1)_{i=1}^m\in O_\fF(\vtheta)$ such that $a_i^1\neq 0$ for all $i\in \{1,\ldots,m\}$. Moreover,  $\vtheta_1$ can be arbitrary close to $\vtheta$ such that  $(a_i^1,\vw_i^1)\neq (a_j^1,\vw_j^1)$ for any $i,j\in \{1,\ldots,m\}, i\neq j$. Since $\vtheta_1$ and $\vtheta$ are on the same orbit, we
regard them as equivalent. In sense of this equivalence, 
without loss of generality we can assume that $a_i\neq 0$ for all $i\in \{1,\ldots,m\}$. By the second statement of Lemma~\ref{lemma:pertubation_new}, without loss of generality we can assume that $\vw_i\neq \vzero$ for all $i\in \{1,\ldots,m\}$. If there exists $i\neq j$ such that $\vw_i=\vw_j$, then $a_i\neq a_j$. By
the third statement of Lemma~\ref{lemma:pertubation_new}, 
without loss of generality we can assume that $\vw_i\neq \vw_j$ for all $i,j\in \{1,\ldots,m\}$ and $i\neq j$.  By the fourth statement of Lemma~\ref{lemma:pertubation_new} we can assume that $\vw_i\neq -\vw_j$ for all $i\neq j$. Therefore there exists non-degenerate $\vtheta'$
on the orbit of $\vtheta$.

(ii) Assume that  $\sigma(x)$ is  generic odd activation, and for any $i,j\in \{1,\ldots,m\}$, $(a_i,\vw_i)\neq \pm(a_j,\vw_j)$. Therefore for each $i\in \{1,\ldots,m\}$, either $a_i\neq 0$ or $\vw_i\neq \vzero$. If $a_i\neq 0$, by the second statement of Lemma~\ref{lemma:pertubation_new}, without loss of generality we can assume that $\vw_i\neq \vzero$. If $\vw_i\neq \vzero$, by the first statement of Lemma~\ref{lemma:pertubation_new}, without loss of generality we can assume that $a_i\neq 0$. In both cases we can assume that $a_i\neq 0, \vw_i\neq \vzero$ for all $i\in \{1,\ldots,m\}$. If there exists $i\neq j$ such that $\vw_i=\vw_j$, then $a_i\neq a_j$. By
the third statement of Lemma~\ref{lemma:pertubation_new}, 
without loss of generality we can assume that $\vw_i\neq \vw_j$ for all $i,j\in \{1,\ldots,m\}$ and $i\neq j$. Similarly, if there exists $i\neq j$ such that $\vw_i=-\vw_j$, then $a_i\neq -a_j$.
By the fourth statement of Lemma~\ref{lemma:pertubation_new}, without loss of generality we can assume that $\vw_i\neq -\vw_j$ for all $i,j\in \{1,\ldots,m\}$ and $i\neq j$. Therefore there exists a non-degenerate $\vtheta'$ on the orbit of $\vtheta$. 

In both cases there exists a non-degenerate $\vtheta'\in O_\fF(\vtheta)$. By Corollary~\ref{cor:lie rank of non-degenerate}, $\mathrm{dim(Lie_{\vtheta'}(\fF)})=(d+1)m$. By Theorem~\ref{thm:Hermann-Nagano}, $\mathrm{dim(Lie_{\vtheta}(\fF)})=\mathrm{dim(Lie_{\vtheta'}(\fF)})=(d+1)m$.

\end{proof}

The result of Corollary~\ref{cor:pertube to non-degenerate} extends to any parameter \( \vtheta \) within a leaf of the invariant partition induced by the orthogonal symmetry group, as formalized in Corollary~\ref{cor:all rank of lie closure}.  Throughout and after Corollary~\ref{cor:pertube to non-degenerate}, the equivalence class $[\vtheta]$ is defined according to Lemma~\ref{lemma: invariant partition} as follows.

 For the orthogonal symmetry group $G$, the stabilizer subgroup of a parameter \( \vtheta \in \mathbb{R}^{(d+1)m} \) is given by $S(\vtheta) := \{ g \in G \mid g(\vtheta) = \vtheta \}$. An equivalence relation on the parameter space is then defined by $ \vtheta_1 \sim \vtheta_2 \Longleftrightarrow S(\vtheta_1) = S(\vtheta_2) $, and $[\vtheta]$ denotes the resulting equivalence class containing $\vtheta$. For generic and generic odd activation functions, $G$ corresponds to the permutation symmetry group $G_{\mathrm{per}}$ and the combined symmetry group $G_{\mathrm{combine}}$ defined in Definition~\ref{def:symmetry group}, respectively.

\begin{corollary}[rank of Lie closure]
Consider the two-layer neural network. Assume that $\sigma(x)$ is generic activation or generic odd activation. Then for any $\vtheta\in \sR^{(d+1)m}$, $\mathrm{dim(Lie_{\vtheta}(\fF)})$ is equal to the dimension of $[\vtheta]$.
\label{cor:all rank of lie closure}
\end{corollary}

\begin{proof}

The proof is provided in Appendix~\ref{sec:proof of all rank of lie closure}.

\end{proof}

\subsection{Orbits}

Corollary~\ref{cor:all rank of lie closure} establishes that, on each leaf of the invariant partition, the rank of the Lie closure equals the dimension of the leaf itself. In this case, Theorem~\ref{theorem: page 44 book}  indicates that determining the orbit reduces to verifying the connectivity of each leaf. In Corollary~\ref{cor:connected}, we show that every leaf is indeed connected.

\begin{corollary}
Consider the two-layer neural network, and assume that \( \sigma(x) \) is either a generic activation or a generic odd activation. Then, for all \( \vtheta \in \mathbb{R}^{(d+1)m} \), the equivalence class \( [\vtheta] \) is connected.
\label{cor:connected}
\end{corollary}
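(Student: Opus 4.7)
The plan is to translate the question into one about configuration-type spaces using the explicit descriptions of the leaves given in Propositions~\ref{example:sim induced by permutation symmetry group} and~\ref{example:sim induced by combined symmetry of tanh network}, and then invoke a standard fact about complements of codimension-$\ge 2$ linear subspaces. Since the paper assumes $d\ge 1$, we have $d+1\ge 2$ throughout, which is exactly what makes this strategy work.

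For the generic activation case, a leaf has the form $\mathcal{M}_{\mathcal{P}}$ for some partition $\mathcal{P}=\{B_1,\ldots,B_s\}$ of $\{1,\ldots,m\}$. I would fix a representative $k_p\in B_p$ for each block and consider the linear projection $(a_i,\vw_i)_{i=1}^m\mapsto (a_{k_p},\vw_{k_p})_{p=1}^s$. The within-block equalities make this a homeomorphism of $\mathcal{M}_{\mathcal{P}}$ onto the ordered configuration space
\[
\bigl\{(\vz_1,\ldots,\vz_s)\in(\mathbb{R}^{d+1})^s\mid \vz_p\neq \vz_q\ \text{for}\ p\neq q\bigr\},
\]
which is the complement of finitely many codimension-$(d+1)$ linear subspaces. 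For the generic odd activation case, a leaf is $\mathcal{M}_{\mathcal{P},\vgamma}$; the analogous projection (now restricted to $p\in\{2,\ldots,s\}$, since $B_1$ contributes only the zero vector) gives a homeomorphism with
\[
\bigl\{(\vz_2,\ldots,\vz_s)\in(\mathbb{R}^{d+1}\setminus\{\mathbf{0}\})^{s-1}\mid \vz_p\neq\pm\vz_q\ \text{for}\ p\neq q\bigr\},
\]
again a complement of finitely many codimension-$(d+1)$ linear subspaces in $\mathbb{R}^{(d+1)(s-1)}$.

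The key remaining ingredient is the classical lemma: if $V_1,\ldots,V_k\subset\mathbb{R}^N$ are linear subspaces each of codimension $\ge 2$, then $\mathbb{R}^N\setminus\bigcup_j V_j$ is path-connected. The short proof I would include proceeds by transversality: given two points $p,q$ in the complement, a generic $2$-dimensional affine plane $\Pi$ through $p$ and $q$ meets each $V_j$ in a set of dimension at most $2-\mathrm{codim}(V_j)\le 0$, so $\Pi\cap V_j$ is finite; hence $\Pi\setminus\bigcup_j V_j$ is $\Pi$ minus a finite set, which is clearly path-connected, giving a path from $p$ to $q$ inside the complement.

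The main potential obstacle is the bookkeeping in the odd case: one must verify that the sign vector $\vgamma$ and the (possibly empty) zero block $B_1$ do not secretly split $\mathcal{M}_{\mathcal{P},\vgamma}$ into disconnected pieces and do not interact with the pairwise inequalities $\vz_p\neq\pm\vz_q$ in unexpected ways. This should reduce to the clean observations that $\vgamma$ coordinate-wise fixes the identification $(a_j,\vw_j)=(\gamma_j/\gamma_{k_p})(a_{k_p},\vw_{k_p})$ inside each block, and that $\vz_p\neq\mathbf{0}$ for $p\ge 2$ follows from the third defining condition together with the presence (or forced absence) of $B_1$. Once this is in place, path-connectedness of every leaf, and therefore of every equivalence class $[\vtheta]$, follows immediately from the codimension-$2$ lemma.
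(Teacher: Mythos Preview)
Your proposal is correct and follows essentially the same route as the paper: both arguments reduce to the observation that each leaf $[\vtheta]$ is (after an obvious linear identification) a Euclidean space minus finitely many linear subspaces of codimension $d+1\ge 2$, and then invoke the standard lemma that such a complement is connected. The paper states this lemma as a citation and applies it directly to $B\setminus\bigcup_i A_i$ inside the ambient $\mathbb{R}^{(d+1)m}$, whereas you first project homeomorphically to $\mathbb{R}^{(d+1)s}$ (or $\mathbb{R}^{(d+1)(s-1)}$) and also sketch a proof of the lemma via generic $2$-planes; these are cosmetic differences only.
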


\begin{proof}
We begin by presenting a standard lemma from manifold geometry\cite{Lecture}.
\begin{lemma}[derived from Theorem 1.1 in Ref.~\refcite{Lecture}]
Let \( A_1, \ldots, A_n \) be linear subspaces of \( \mathbb{R}^M \), each with codimension at least 2. Then the set \( \mathbb{R}^M \setminus \bigcup_{i=1}^n A_i \) is connected.
\label{lemma:connected 1}
\end{lemma}

We now return to the proof of the corollary.
As shown in Propositions~\ref{example:sim induced by permutation symmetry group} and~\ref{example:sim induced by combined symmetry of tanh network}, for any \( \vtheta \in \mathbb{R}^M \), the corresponding leaf \( [\vtheta] \) takes the form $
[\vtheta] = B \setminus \left( \bigcup_{i=1}^s A_i \right),
$  
where \( B \subset \mathbb{R}^M \) is a linear subspace, and each \( A_i \subset B \) is a linear subspace of \( B \) with \( \dim(A_i) \leq \dim(B) - 2 \).  
By Lemma~\ref{lemma:connected 1}, such a set \( [\vtheta] \) is connected.
\end{proof}

With the necessary preliminaries established, we are now in a position to present Theorem~\ref{thm:final orbit}, which serves as the principal result of this section.
\begin{theorem}[SIMs are all symmetry-induced
for generic two-layer neural networks]
Consider the two-layer neural network, and assume that \( \sigma(x) \) is either a generic activation or a generic odd activation. Then for any \( \vtheta \in \mathbb{R}^{(d+1)m} \), $[\vtheta]=O_{\fF}(\vtheta)$.
\label{thm:final orbit}
\end{theorem}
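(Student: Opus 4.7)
The plan is to obtain the conclusion as a direct application of the second assertion of Corollary~\ref{theorem: page 44 book}, once the three preceding ingredients---SIM invariance of $[\vtheta]$, full Lie rank on each leaf, and connectivity of each leaf---are combined.

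First, I would record the easy inclusion $O_{\fF}(\vtheta) \subset [\vtheta]$. Under either hypothesis on $\sigma$, the relevant symmetry group ($G_{\mathrm{per}}$ for generic activation, $G_{\mathrm{combine}}$ for generic odd activation) is a finite orthogonal symmetry group, so Lemma~\ref{lemma: invariant partition} applies and $[\vtheta]$ is a SIM. Theorem~\ref{def:sim} then identifies $[\vtheta]$ as a union of orbits of $\fF$, which gives the desired inclusion.

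For the reverse inclusion, I would regard $[\vtheta]$ as an analytic manifold in its own right; by the proof of Lemma~\ref{lemma: invariant partition} it is relatively open in a linear subspace of $\sR^{(d+1)m}$. Because $[\vtheta]$ is a SIM, each vector field $\nabla_{\vtheta}F(\cdot)(\vx)$ is tangent to $[\vtheta]$ at every point of $[\vtheta]$, so restricting $\fF$ to $[\vtheta]$ yields an analytic family of vector fields on $[\vtheta]$ whose orbit through $\vtheta$ coincides with $O_{\fF}(\vtheta)$. Corollary~\ref{cor:all rank of lie closure} gives $\dim\mathrm{Lie}_{\vtheta'}(\fF) = \dim[\vtheta'] = \dim[\vtheta] = \dim T_{\vtheta'}[\vtheta]$ for every $\vtheta' \in [\vtheta]$, which combined with the obvious inclusion $\mathrm{Lie}_{\vtheta'}(\fF) \subset T_{\vtheta'}[\vtheta]$ forces equality of tangent spaces at every point of $[\vtheta]$. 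Corollary~\ref{cor:connected} supplies connectivity of $[\vtheta]$, so the hypotheses of the second part of Corollary~\ref{theorem: page 44 book} are met with $\fM = [\vtheta]$, yielding a single orbit equal to $[\vtheta]$ and hence $O_{\fF}(\vtheta) = [\vtheta]$.

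The hard part is not in this final packaging but in the groundwork laid by Corollary~\ref{cor:all rank of lie closure} (the Lie-rank computation via neuron independence and the perturbation lemma) and Corollary~\ref{cor:connected} (the codimension-two argument ruling out disconnectedness of leaves). A subtle technical point worth verifying is the identification of orbits: one must check that the orbit of $\fF|_{[\vtheta]}$ computed inside $[\vtheta]$ really coincides with $O_{\fF}(\vtheta)$ computed in $\sR^{(d+1)m}$, but this is immediate from SIM invariance, since every integral curve used to build $O_{\fF}(\vtheta)$ starting in $[\vtheta]$ remains in $[\vtheta]$.
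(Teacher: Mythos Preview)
Your proposal is correct and follows essentially the same route as the paper: invoke Lemma~\ref{lemma: invariant partition} to view $[\vtheta]$ as a SIM (hence as a manifold carrying the restricted family $\fF$), use Corollary~\ref{cor:all rank of lie closure} for the full Lie rank and Corollary~\ref{cor:connected} for connectivity, and conclude via Corollary~\ref{theorem: page 44 book}. Your explicit check that the orbit computed inside $[\vtheta]$ agrees with $O_{\fF}(\vtheta)$ in the ambient space is a nice touch the paper leaves implicit.
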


\begin{proof}
For any $\vtheta \in \mathbb{R}^{(d+1)m}$, Corollary~\ref{cor:all rank of lie closure} implies that $\dim(\mathrm{Lie}_{\vtheta}(\fF)) = \dim([\vtheta])$, where $\dim([\vtheta])$ denotes the dimension of $[\vtheta]$. By Corollary~\ref{cor:connected}, the set $[\vtheta]$ is connected. According to Lemma~\ref{lemma: invariant partition},
$[\vtheta]$ is a SIM. So $\fF$ can be regarded as a family of vector fields defined on $[\vtheta]$. 
Applying Theorem~\ref{theorem: page 44 book}, it then follows that $[\vtheta]=O_{\fF}(\vtheta),\forall \vtheta\in \sR^{(d+1)m}$.
\end{proof}

\begin{remark}
Theorem~\ref{thm:final orbit} establishes that the symmetry-induced SIMs identified in Theorem~\ref{thm:all symmetry of neural network} are sufficient to generate all SIMs of generic two-layer neural networks via set-theoretic operations such as union, intersection, and complement.
Moreover, we conjecture that this result extends to arbitrary two-layer neural networks, as defined in Definition~\ref{def:neural network}. A proof may be attainable by extending the techniques developed in this section.

\end{remark}

\begin{figure}[htbp]
    \centering    \includegraphics[width=0.8\linewidth]{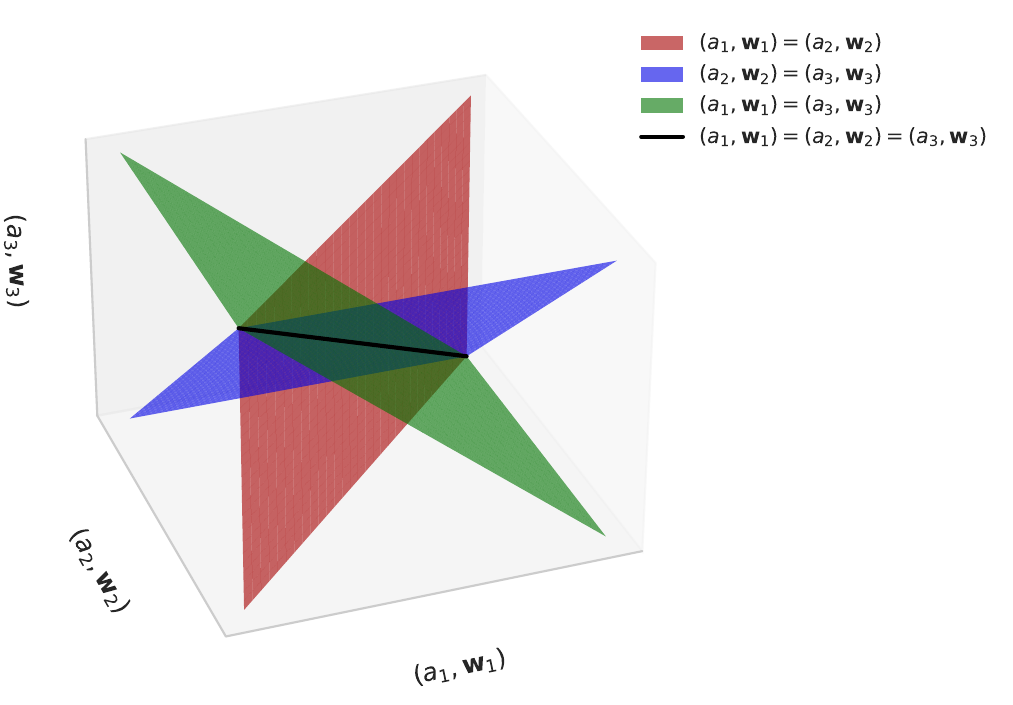}
    \caption{ We visualize four representative symmetry-induced SIMS in a width-three, two-layer neural network with a generic activation. To enable 3D visualization, for each \( i = 1, 2, 3 \), we compress the parameter pair \((a_i, \mathbf{w}_i)\) into a single coordinate. }
    \label{fig:illustration}
\end{figure}

\begin{example}
Consider a two-layer neural network with sigmoid activation and width $3$. The network function is given by
$
F(\vtheta)(\vx) = \sum_{i=1}^3 a_i \sigma(\vw_i^\T \vx),
$
where the activation function is defined as \(\sigma(x) = \frac{1}{1 + e^{-x}}\). 
In Figure~\ref{fig:illustration}, we visualize  four representative symmetry-induced SIMs for this network. These are defined as follows:
\[
\begin{aligned}
&\fM_1 = \left\{ \vtheta \mid (a_1, \vw_1) = (a_2, \vw_2) \right\}, \quad 
\fM_2 = \left\{ \vtheta \mid (a_2, \vw_2) = (a_3, \vw_3) \right\}, \\
&\fM_3 = \left\{ \vtheta \mid (a_1, \vw_1) = (a_3, \vw_3) \right\}, \quad 
\fM_4 = \left\{ \vtheta \mid (a_1, \vw_1) = (a_2, \vw_2) = (a_3, \vw_3) \right\}.
\end{aligned}
\]

By Proposition~\ref{example:sim induced by permutation symmetry group}, each equivalence class \([\vtheta]\) can be generated from these four symmetry-induced SIMs using set operations (intersection, union, and complement). 
Furthermore, by Theorem~\ref{thm:final orbit}, for any \(\vtheta \in \mathbb{R}^M\), \([\vtheta]\) is  equal to the orbit  passing through $\vtheta$. 
Additionally, Theorem~\ref{def:sim} states that each SIM is a union of orbits. As a consequence, all SIMs of the network can be constructed from these four representative symmetry-induced SIMs using intersection, union, and complement operations. 
This provides a constructive interpretation of the statement “all SIMs are symmetry-induced” as asserted in Theorem~\ref{thm:final orbit}.
\end{example}

\section{Discussion}

\subsection{Relevance to Practical Training Method}
\label{sec:SGD}

While the results of this paper primarily concern the theoretical analysis of gradient-flow dynamics, they hold implications for more practical training methods such as gradient descent (GD) and stochastic gradient descent (SGD). 

\paragraph{Affine SIM.}

The symmetry-induced structural invariant manifolds (SIMs) in deep neural networks, as characterized in Theorem~\ref{thm:all symmetry of neural network}, are all affine subspaces. Theoretically, it is straightforward to show that such affine SIMs remain invariant under both gradient descent (GD) and stochastic gradient descent (SGD) updates. To empirically support this claim, we conduct an experiment using a two-layer neural network. The experimental results are presented in Figure~\ref{fig:invariance}, with additional details provided in Appendix~\ref{sec:details of invariance}.

In our experiment, we consider a two-layer neural network with width $100$. The network is initialized on a SIM by setting the parameters of the first two hidden neurons to be identical, i.e., \(a_1(0) = a_2(0)\) and \(\vw_1(0) = \vw_2(0)\). To examine the data-independence of SIM invariance, we adopt a training scheme where, at each iteration, a input-target pair is sampled from a standard normal distribution. To test loss-independence, we perform three independent training runs using different loss functions.

Throughout training, we track the Euclidean distance \(\|\boldsymbol{\theta}_1(t) - \boldsymbol{\theta}_2(t)\|_2\) between the parameters of the two initially identical neurons. As shown in Figure~\ref{fig:invariance}, this distance remains below \(10^{-50}\) across all 2000 training steps, effectively zero at every iteration. This invariance holds across all loss functions tested, and persists even when training data is resampled at each step. The experimental results confirm that SIMs are invariant under both GD and SGD, with this invariance holding regardless of the data and the  loss function. 

\begin{figure}[htbp]
    \centering
    \includegraphics[width=0.55\linewidth]{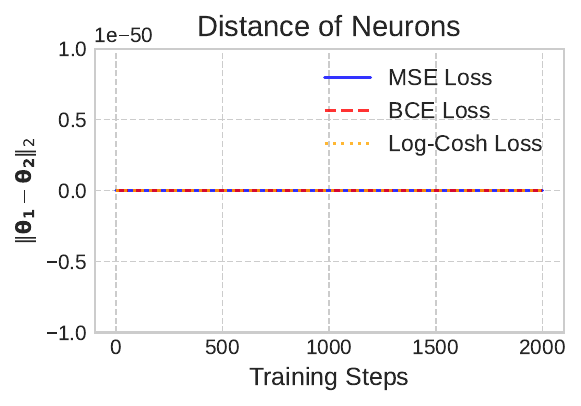} 
\caption{ The plot shows the Euclidean distance \(\|\boldsymbol{\theta}_1 - \boldsymbol{\theta}_2\|_2\) between two identically initialized neurons. The details of the experiment are presented in Appendix~\ref{sec:details of invariance}.}
\label{fig:invariance}
\end{figure}

\paragraph{Curved SIM.}
In  the matrix factorization model and ReLU networks, certain conserved quantities give rise to curved SIMs~\cite{simsek2021geometry,ziyin2023symmetry,marcotte2023abide,marcotte2025intrinsic}. Unlike affine SIMs, these curved SIMs are not necessarily invariant under GD or SGD. Specifically, the use of a finite learning rate in GD induces dynamics in directions normal to the SIM, while the stochasticity inherent in SGD introduces additional perturbations in the normal direction~\cite{ziyin2024parameter}. However, for both GD and SGD, these normal dynamics tend to vanish as the learning rate approaches zero, thereby recovering the invariance property under the gradient flow limit~\cite{ziyin2024parameter}.

\subsection{Dynamics Near SIM.} 

In this work, we establish that, once the parameters are initialized on SIM, the gradient flow dynamics remain confined to it and cannot escape. However, in neural networks, symmetry-induced SIMs constitute a Lebesgue measure zero subset of the parameter space. Consequently, under random initialization, it is highly improbable for the parameters to lie exactly on the SIM. To bridge this gap, we analyze the behavior of the dynamics in the neighborhood of the SIM.

Given that the SIM is invariant under gradient flow, the continuity of the dynamics implies that, for initializations sufficiently close to the manifold, the gradient component in the direction normal to the SIM must be small. As a result, the trajectory drifts away from the SIM only gradually, remaining in its vicinity for an extended period. As a consequence, the SIM functions as a ``slow manifold," where the dynamics in the normal direction evolve significantly more slowly than those along the manifold.

\begin{figure}[htbp]
    \centering
    \includegraphics[width=1\linewidth]{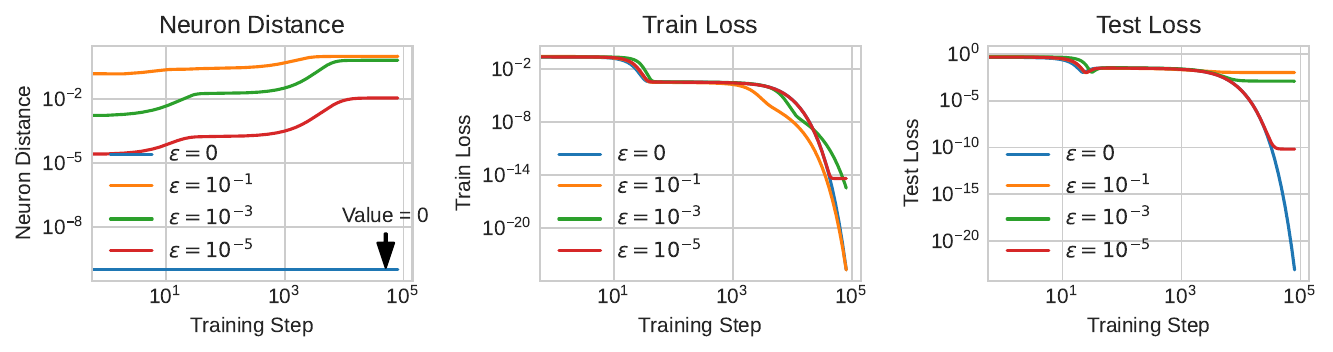}
    \caption{ We train a two-layer neural network of width $2$ under different initialization. The input space is two-dimensional. $\varepsilon$ represents the deviation from the SIM $\{\vtheta\mid (a_1,\vw_1)=(a_2,\vw_2)\}.$
     The target function is a width-$1$ two-layer neural network with all weights set to $1$. Training data consists of four input vectors \(\mathbf{x}\) sampled randomly from a standard Gaussian distribution, and the corresponding outputs \(y\) are generated by passing \(\mathbf{x}\) through the target network. The neuron distance is the value of $\|\vtheta_1-\vtheta_2\|_2$.
    To evaluate the test loss, we randomly sampled 100 input vectors \( \mathbf{x} \) from a standard normal distribution and measured the \( \ell_2 \) distance between the outputs of the network and the target function.
    }
\label{fig:normal_dynamics}
\end{figure}

To empirically support this claim, we conduct a controlled experiment using a two-layer neural network with input dimension $2$ and hidden layer width $2$. To investigate the behavior of the network near SIM, we initialize the two hidden neurons to be nearly identical, differing only by a small perturbation of magnitude \(\varepsilon\). Specifically, we consider \(\varepsilon = 0\), corresponding to exact initialization on the SIM, as well as \(\varepsilon = 10^{-1}, 10^{-3}, 10^{-5}\), to systematically explore the effect of deviations from the manifold. The initialization scale of parameters is $0.1$.

Details of the experimental setup are provided in Appendix~\ref{sec:details of normal dynamics}. The corresponding results are presented in Figure~\ref{fig:normal_dynamics}. We highlight the following two key observations in Figure~\ref{fig:normal_dynamics},:

\noindent\textbf{Persistence in the vicinity of the SIM.}  
When the network is initialized slightly off the SIM (e.g., \(\varepsilon = 10^{-3}, 10^{-5}\)), we observe that the distance between the two hidden neurons gradually increases over time. However, this increase remains small throughout a significant portion of training, indicating that the trajectory stays close to the SIM for an extended period. This empirical behavior aligns well with our theoretical prediction regarding the slow dynamics in directions normal to the SIM.

\noindent\textbf{Low Test loss in the overparameterized setting.}  
We also examine the test loss throughout training. Note that this is an overparameterized regime: the model contains six trainable parameters but is trained on only four data samples. These samples are synthetically generated by a teacher network with a single hidden neuron.  As shown in Figure~\ref{fig:normal_dynamics}, the final test loss is  remarkably low when \(\varepsilon = 0\) and \(\varepsilon = 10^{-5}\), indicating good generalization despite the overparameterization.

When initialized exactly on the SIM (\(\varepsilon = 0\)), the two hidden neurons remain identical throughout training, effectively reducing the number of independent parameters to three. In this case, the model has just enough capacity to fit the training data, enabling it to accurately learn the target function.

When initialized sufficiently close to the SIM (e.g., \(\varepsilon = 10^{-5}\)), the network remains in the vicinity of the SIM for a substantial portion of training. During this phase, the two neurons behave almost identically, and the model effectively operates with approximately three degrees of freedom. This implicit regularization---arising from the slow escape from the SIM---contributes to good generalization performance on the one-neuron target function, even in the presence of overparameterization.

\section{Conclusions}
\label{sec:conclusion}

In this work, we lay the theoretical foundation for identifying SIMs in analytic parametric models by employing geometric control theory. By uncovering the hierarchy of symmetry-induced SIMs for deep neural networks and enumerating all SIMs for the two-layer network, we unravel the profound dynamical consequence of the layer-wise neural network architecture. These SIMs display condensation behavior and underlie the remarkable potential for target recovery in overparameterized settings. Although the milestone of fully solving the recovery puzzle for neural networks has not yet been reached, these SIMs offer powerful tools for tracing global training dynamics. Building on our findings, we anticipate major breakthroughs in solving the recovery puzzle in the near future. Such advances will pave the way for a comprehensive generalization theory that clarifies how architecture design, target properties, training samples, nonlinear dynamics, and parameter tuning collectively shape the generalization of neural networks.

\newpage

\renewcommand{\thesection}{A} 
\setcounter{section}{0}      
\renewcommand{\thedefinition}{A.\arabic{definition}}
\setcounter{definition}{0}
\renewcommand{\theremark}{A.\arabic{remark}} 
\setcounter{remark}{0}

\appendix

\section{Definitions}
\label{appendix:def}

In the appendix, we present several definitions and concepts from geometric control theory that are pertinent to the content of this paper. As our analysis is conducted within the analytic category, all definitions are stated in their analytic form. The material is drawn from Ref.~\refcite{jurdjevic1997geometric} and Ref.~\refcite{ortega2013momentum}.
\subsection{Differential Geometry}

\begin{definition}[analytic manifold, page 3 and 4 of Ref.~\refcite{jurdjevic1997geometric})]
$\fM$ is called an $n$ dimensional analytic manifold if $\fM$ is a topology space such that at each point $p\in \fM$
 there exists a neighbourhood $U$ of $p$ and a homeomorphism $\phi$ from $U$ onto an open subset of $\sR^n$.  It is assumed that $n$ does not vary with the choice of a point $p$ on $\fM$. The pair $(\phi,U)$ is called a chart at $p$. Moreover:
 \begin{enumerate}
 \item There exists a countable collection of charts $\{(\phi_i,U_i)\}_{i=1}^{\infty}$ such that $\fM=\bigcup_{i=1}^{\infty} U_i$.
     \item For each pair of points $p_1$ and $p_2$, there exist charts $(\phi_1,U_1)$ and $(\phi_2,U_2)$ such that $p_1\in U_1$, $p_2\in U_2$, and $U_1\cap U_2=\emptyset$. That is, points of $\fM$ are separated by coordinate neighborhoods (i.e., $\fM$ is Hausdorff).
        \item  For any charts $(\phi_1,U_1)$ and $(\phi_2,U_2)$ such that $U_1\cap U_2\neq \emptyset$, the mapping $\phi_1\circ \phi_2^{-1}$ is analytic as a mapping from an open set in $\sR^n$ into $\sR^n$.

 \end{enumerate}
\end{definition}

\begin{definition}[analytic vector fields, Definition 1 in Chapter 1 of Ref.~\refcite{jurdjevic1997geometric}] Let $\fM$ be an analytic manifold. The totality of $(p,v),p\in \fM, v\in T_p \fM$, is called the tangent bundle of $\fM$ and is denoted by $T\fM$. A vector field is a mapping $X: \fM \to T\fM$ such that for each $p \in \fM$, if $\pi: T\fM \to \fM$ denotes the natural projection, then $\pi\big(X(p)\big)=p$. We say that $X$ is an analytic vector field if $X$ is an analytic map from $\fM$ (as an analytic manifold) into $T\fM$ (another analytic manifold).
\label{def: analytic vector fields}
\end{definition}

\begin{definition}[integral curve,  Definition 3 in Chapter 1 of Ref.~\refcite{jurdjevic1997geometric}]
A differential curve $p(t),t\in J$ on $\fM$ is an integral curve of an analytic vector field $X$ if $\frac{\rd p}{\rd t}=X\circ p$ for each $t$ in $J$. We shall say an integral curve $p(t),t\in J$ of $X$ is the integral curve through $p_0\in \fM$ if $p(0)=p_0$ and the domain $J\subset \sR$ is maximal.
    
\end{definition}

\begin{definition}[complete vector field, flow, $e^{tX}$,    Definition 4 in Chapter 1 of Ref.~\refcite{jurdjevic1997geometric}]
A vector field $X$ is called complete if the integral curves through each point $p_0$ in $\fM$ are defined for all values of $t$ in $\mathbb{R}$. In such a case, $X$ is said to define a flow $\Phi$ on $\fM$. $\Phi: \mathbb{R} \times \fM \rightarrow \fM$ is defined by $\Phi(t, p_0) = p(t)$, where $p(t)$ is the integral curve through $p_0$.
For each $t$, define the mapping $\Phi_t(p)=\Phi(t,p)$. We shall also use $e^{tX}$ to denote the mapping $\Phi_t$.
\end{definition}
\begin{remark}[page 16 of Ref.~\refcite{jurdjevic1997geometric}]
For complete vector fields, its flow $\Phi$ has following properties:
\begin{enumerate}
    \item $\Phi(0, p) = p$ for all $p \in \fM$.
    \item $\Phi(t+s, p) = \Phi(t, \Phi(s, p))$ for all $(s, t)$ in $\mathbb{R}^2$ and all $p \in \fM$.
    \item $(\partial/\partial t)\Phi(t, p) = X \circ \Phi(t, p)$ for all $(t, p)$ in $\mathbb{R} \times \fM$.
    \item The mapping $\Phi$ is analytic  whenever $X$ is analytic.
    \item For each $t$, $e^{tX}$ is a diffeomorphism on $\fM$.
\end{enumerate}

\end{remark}

\begin{definition}[local flow, page 17 of Ref.~\refcite{jurdjevic1997geometric}]
Let $X$ be an analytic vector (possibly non-complete) field on an analytic manifold $\fM$. 
In order to define the local flow of a vector field at $p$ in $\fM$, it is first necessary to define the escape times of the integral curve of $X$ through $p$. The positive escape time $e^+(p)$ is defined to be the supremum of the domain of the
integral curve through $p$. The negative escape time $e^-(p)$ is defined similarly. Let $\Delta = \{(t, p) \mid e^-(p) < t < e^+(p)\}$. Then $\Delta$ is an open subset of $\mathbb{R} \times \fM$ and a neighborhood of $\{0\} \times \fM$. The local flow $\Phi$ of $X$ is defined on $\Delta$.
\end{definition}
\begin{remark}[page 17 of Ref.~\refcite{jurdjevic1997geometric}]
The local flow $\Phi$ satisfies the following:

\begin{enumerate}
    \item $\Phi(0, p) = p$ for all $p \in \Delta$.
    \item $\Phi(t+s, p) = \Phi(t, \Phi(s, p))$ whenever each of $(s, p)$ and $(t, \Phi(s, p))$ is contained in $\Delta$.
    \item $(\partial\Phi/\partial t)(t, p) = X \circ \Phi(t, p)$ for all $(p, t)$ in $\Delta$.
    \item $\Phi$ is analytic whenever $X$ is analytic.
\end{enumerate}
    
\end{remark}
\begin{definition}[immersed submanifold, Definition 1 in Chapter 2 of Ref.~\refcite{jurdjevic1997geometric}]
 An differentiable mapping
 $f$ between two differential manifolds is called an immersion if the rank of
 the tangent map of $f$ at each point is equal to the dimension of the domain manifold. Then the definition of an immersed submanifold  is as follows:
Given two differentiable manifolds $\fM$ and  $\fN$, if there exists
an immersion $f:\fN\to \fM$, then $f(\fN)$ is called an immersed submanifold of $\fM$.
 
\label{def:immersed submanifold}
\end{definition}

\subsection{Local Diffeomorphisms and Pseudogroups}
\label{appendix:local diffeomorphism}

The materials of~\ref{appendix:local diffeomorphism} are  from Section 3.1 of Ref.~\refcite{ortega2013momentum}.
\subsubsection{Local diffeomorphisms }
Let $\fM$ be an  analytic manifold. The symbol $\mathrm{Diff}(\fM)$ will denote the set of diffeomorphisms of $\fM$.
The symbol $\mathrm{Diff}_L(\fM)$ will denote the set of local diffeomorphisms of $\fM$. 
More explicitly, the elements of $\mathrm{Diff}_L(\fM)$ are diffeomorphisms $f: \mathrm{Dom}(f) \subset \fM \to f(\mathrm{Dom}(f)) \subset \fM$ of an open subset $\mathrm{Dom}(f) \subset \fM$ onto its image $f(\mathrm{Dom}(f)) \subset \fM$. We will denote the elements of $\mathrm{Diff}_L(\fM)$ as pairs $(f, \mathrm{Dom}(f))$. The local diffeomorphisms can be composed using the binary operation defined as
\begin{equation}
(f, \mathrm{Dom}(f)) \cdot (g, \mathrm{Dom}(g)) := (f \circ g, \mathrm{Dom}(f) \cap \mathrm{Dom}(g)),
\label{eq:composition}
\end{equation}
for all $(f, \mathrm{Dom}(f))$, $(g, \mathrm{Dom}(g)) \in \mathrm{Diff}_L(\fM)$.
It is easy to see that this operation is
associative and has $(\mathrm{id}, \fM)$, the identity map of $\fM$, as a (unique) two-sided identity element, which makes $\mathrm{Diff}_L(\fM)$ into a monoid (set with an associative operation which contains a two-sided identity element). 
Notice that only the elements in $\mathrm{Diff}(\fM) \subset \mathrm{Diff}_L(\fM)$ have an inverse since, in general, for any $(f, \mathrm{Dom}(f)) \in \mathrm{Diff}_L(\fM)$, we have that
\begin{align}
(f^{-1}, \mathrm{Dom}(f^{-1})) \cdot (f, \mathrm{Dom}(f)) &= (\mathrm{id}|_{\mathrm{Dom}(f)}, \mathrm{Dom}(f))\label{eq: left inverse}
\\
(f, \mathrm{Dom}(f)) \cdot (f^{-1}, \mathrm{Dom}(f^{-1})) \label{eq: right inverse} &= (\mathrm{id}|_{\mathrm{Dom}(f^{-1})}, \mathrm{Dom}(f^{-1})).
\end{align}
Consequently, the only way to obtain the identity element $(\mathrm{id}, \fM)$ out of the composition of $f$ with its inverse is having $\mathrm{Dom}(f) = \fM$. It follows from this argument that $\mathrm{Diff}(\fM)$ is the biggest subgroup contained in the monoid $\mathrm{Diff}_L(\fM)$ with respect to the composition law Eq.~\eqref{eq:composition}.

\subsubsection{Pseudogroups}
\begin{definition}[pseudogroup]
For a submonoid $A$ of $\mathrm{Diff}_L(\fM)$, if  for any $f: \mathrm{Dom}(f) \to f(\mathrm{Dom}(f)) \in A$ there exists another element $f^{-1}: f(\mathrm{Dom}(f)) \to \mathrm{Dom}(f)$ also in $A$ that satisfies the identities Eq.~\eqref{eq: left inverse} and Eq.~\eqref{eq: right inverse},
then $A$ is 
referred to as a pseudogroup of $\mathrm{Diff}_L(\fM)$.
\end{definition}
\begin{remark}
One of the important features of pseudogroups is that they have an associated orbit space. Indeed, if $A$ is a pseudogroup we define the orbit $A \cdot p$ under $A$ of any element $p \in \fM$ as the set $A \cdot p := \{f(p) \mid f \in A, \text{ such that } p \in \mathrm{Dom}(f)\}$. $A$ being a pseudogroup implies that the relation being in the same $A$-orbit is an equivalence relation and induces a partition of $\fM$ into $A$-orbits. 

\end{remark}

\subsubsection{Pseudogroups generated by arrow}

A significant number of integrable pseudogroups are generated by collections of arrows (see Ref.~\refcite{stefan1974accessible}). 

\begin{definition}[arrow]
An arrow is a differentiable mapping $\Phi: U \subset \mathbb{R} \times \fM \rightarrow \fM$ whose domain $U$ is an open subset of $\mathbb{R} \times \fM$ and that, additionally, satisfies:
\begin{enumerate}
    \item For every $t \in \mathbb{R}$, the map $\Phi_t: = \Phi(t, \cdot)$ is a local diffeomorphism of $\fM$ (possibly with empty domain).
    \item If the point $(t,p)$ belongs to the domain of $\Phi$, then so does $(s,p)$ for every $s \in [0, t]$. Moreover, $\Phi(0,p) = p$.
\end{enumerate} 
\end{definition}
An example of an arrow is the flow of an analytic vector field on $\fM$.
Let $E$ be a collection of arrows on $\fM$. We associate $E$ to a set $\mathcal{A}_{E} \subset \mathrm{Diff}_L(\fM)$ of local diffeomorphisms defined by $\mathcal{A}_{E} := \{\Phi_t \mid \Phi \in E, t\in \sR\}$, which at the same time, generates a pseudogroup
$$
A_{E} = (\mathrm{id}, \fM) \cup \bigcup_n \{\Phi_{1} \circ \dots \circ \Phi_{n} \mid n \in \mathbb{N} \text{ and for all } i=1,2,\ldots,n, \Phi_{i} \in \mathcal{A}_{E} \text{ or } (\Phi_{i})^{-1} \in \mathcal{A}_{E} \}.
$$

\subsection{Reachable Set and Orbit}
\begin{definition}[integral curve of a family of vector fields, Definition 5 in Chapter 1 of Ref.~\refcite{jurdjevic1997geometric}]
Let $\mathcal{F}$  be a family of analytic vector fields on an analytic manifold $\fM$.
A continuous curve $p(t)$ in $\fM$, defined on an interval $[0, T]$, is called an integral curve of $\mathcal{F}$ if there exist a partition $0 = t_0 < t_1 < \dots < t_k = T$ and vector fields $X_1, \dots, X_k$ in $\mathcal{F}$ such that the restriction of $p(t)$ to each open interval $(t_{i-1}, t_i)$ is differentiable, and $\frac{\mathrm{d}p(t)}{\mathrm{d}t} = X_i(p(t))$ for $i = 1, \dots, k$.

\end{definition}

\begin{definition}[reachable set, Definition 6 in Chapter 1 of Ref.~\refcite{jurdjevic1997geometric}]
 Let $\mathcal{F}$  be a family of analytic vector fields on an analytic manifold $\fM$.
\begin{enumerate}
    \item For each $T \geq 0$, and each $p_0$ in $\fM$, the set of points reachable from $p_0$ at time $T$, denoted by $R(p_0, T)$, is defined to be the set of the terminal points $p(T)$ of integral curves of $\mathcal{F}$ that originate at $p_0$.
    \item The union of $R(p_0, T)$, for $T \geq 0$, is called the set reachable from $p_0$. We will denote it by $R(p_0)$.
\end{enumerate}
    
\end{definition}

\begin{remark}[page 28 of Ref.~\refcite{jurdjevic1997geometric}]
The reachable sets admit further geometric descriptions through the following formalism. Assuming that the elements of $\mathcal{F}$ are all complete vector fields, then each element $X$ in $\mathcal{F}$ generates a one-parameter group of diffeomorphisms $\{e^{tX} \mid t \in \mathbb{R}^1\}$. Let $G(\mathcal{F})$ denote the subgroup of the group of diffeomorphisms in $\fM$ generated by the union of $\{e^{ tX} \mid t \in \mathbb{R}, X \in \mathcal{F}\}$. Each element $\Phi$ of $G(\mathcal{F})$ is a diffeomorphism of $\fM$ of the form
$$
\Phi = e^{ t_k X_k}\circ e^{ t_{k-1}X_{k-1}}\circ  \cdots \circ e^{ t_1 X_1}
$$
for some real numbers $t_1, \dots, t_k$ and vector fields $X_1, \dots, X_k$ in $\mathcal{F}$. $G(\mathcal{F})$ acts on $\fM$ in the obvious way and partitions $\fM$ into its orbits. Then the set reachable through $p_0$ at time $T$ consists of all points $\Phi(p_0)$ corresponding to elements $\Phi$ of $G(\mathcal{F})$ that can be expressed as $
\Phi = e^{ t_k X_k}\circ e^{ t_{k-1}X_{k-1}}\circ  \cdots \circ e^{ t_1 X_1}
$, with $t_1 \ge 0, \dots, t_k \ge 0, t_1 + \dots + t_k = T$, and $X_1, \dots, X_k$ in $\mathcal{F}$. The other reachable sets have analogous descriptions. In particular, $R(p_0)$ is equal to the orbit of the semigroup $S_\mathcal{F}$ through $p_0$, with $S_\mathcal{F}$ equal to the semigroup of all elements $\Phi$ in $G(\mathcal{F})$ of the form $
\Phi = e^{ t_k X_k}\circ e^{ t_{k-1}X_{k-1}}\circ  \cdots \circ e^{ t_1 X_1}
$, with $t_1 \ge 0, \dots, t_k \ge 0$, and $X_1, \dots, X_k$ in $\mathcal{F}$. The orbit of $S_\mathcal{F}$ through $p_0$, written as $S_\mathcal{F}(p_0)$, is equal to $\{\Phi(p_0) \mid \Phi \in S_\mathcal{F}\}$.

When some elements of $\mathcal{F}$ are not complete, then it becomes necessary to replace the corresponding groups of diffeomorphisms by local groups, and everything else remains the same. 
\end{remark}

\begin{definition}[orbit of family of vector fields]
Let $\mathcal{F}$ be a family of analytic vector fields on an analytic manifold $\fM$. Let $G$  denote the group (pseudogroup) of diffeomorphisms (local diffeomorphisms) generated by $\{e^{tX} \mid t \in \mathbb{R}, X \in \mathcal{F}\}$.
Then the orbit of $\mathcal{F}$ through $p$ is defined to be $\{\phi(p) \mid \phi \in G\}$. 
\end{definition}

\subsection{Lie Algebra}

\begin{definition}[Lie bracket, page 40 of Ref.~\refcite{jurdjevic1997geometric}]
Analytic vector fields act as derivations on the space of analytic functions. Moreover, If $X$ denotes an analytic vector field, and $h$ an analytic function on $\fM$, then $Xh$ will denote the function $p \mapsto X(p)(h)$.
For any analytic vector fields $X$ and $Y$, their Lie bracket $[X, Y]$ is defined by $[X, Y]h = Y(Xh) - X(Yh)$. 
\end{definition}
\begin{remark}[page 40 of Ref.~\refcite{jurdjevic1997geometric}]
If both $X$ and $Y$ are analytic vector fields on an analytic manifold $\fM$, then $[X, Y]$ is an analytic vector field. Let $p\in \fM$ and $(\phi,U)$ be a chart at $p$. The map $\phi:U\to \sR^n$ gives a  local coordinate $p\to (x_1(p),\ldots,x_n(p))$. In terms of the local coordinates, $[X, Y]$ is given by the following relations:
let $X(p) = \sum_{i=1}^n a_i(x_1, \dots, x_n)(\partial/\partial x_i)$, $Y(p) = \sum_{i=1}^n b_i(x_1, \dots, x_n)(\partial/\partial x_i)$, and $[X, Y](p) = \sum_{i=1}^n c_i(x_1, \dots, x_n)(\partial/\partial x_i)$. Then
\begin{equation}
c_i = \sum_{j=1}^n \left( \frac{\partial a_i}{\partial x_j}b_j - \frac{\partial b_i}{\partial x_j}a_j \right), \quad i = 1, 2, \dots, n.
\end{equation}
    
\end{remark}

\begin{definition}[Lie algebra of analytic vector fields, page 42 of Ref.~\refcite{jurdjevic1997geometric}]
Let $\mathfrak{X}^{\omega}(\fM)$ denote the space of all analytic vector fields on $\fM$. $\mathfrak{X}^{\omega}(\fM)$ is a real vector space under the pointwise addition of vectors
\begin{equation}
(\alpha X + \beta Y)(p) = \alpha X(p) + \beta Y(p) \quad \text{for all} \quad p \in \fM
\label{eq:linear}
\end{equation}
for each set of real numbers $\alpha$ and $\beta$ and vector fields $X$ and $Y$.
We shall regard $\mathfrak{X}^{\omega}(\fM)$ as an algebra, with the addition given by Eq.~\eqref{eq:linear} and with the product given by the Lie bracket.

\end{definition}

\section{Proofs}
\label{sec:proof}

\subsection{Proof of Proposition~\ref{prop:infinitesimal symmetry of neural network}}

\label{sec:proof of infinitesimal symmetry}
\begin{proof}
We use backpropagation to derive the gradients. For $l=1,\ldots,L$, define $\vz^{(l)}=\vW^{(l)}\va^{(l-1)}+\vb^{(l)}\in \sR^{n_l}$ and $\vdelta^{(l)}= \frac{\partial F}{\partial \vz^{(l)}} \in \mathbb{R}^{n_l}$.

Then $ \vdelta^{(L)} = \frac{\partial F}{\partial \vz^{(L)}} =1$,  and $\vdelta^{(l)} = \text{diag}(\sigma'(\vz^{(l)})) (\vW^{(l+1)})^\T \vdelta^{(l+1)}$ for $l=L-1,\ldots,1$. The partial derivatives for $\vW^{(l)}$ and $\vb^{(l)}$ are given by $\frac{\partial F}{\partial \vW^{(l)}} = \vdelta^{(l)} (\va^{(l-1)})^\T \in \mathbb{R}^{n_l \times n_{l-1}}$ and $\frac{\partial F}{\partial \vb^{(l)}} = \vdelta^{(l)} \in \mathbb{R}^{n_l}$.

(i)  Assume $\sigma(0)=0$. Let $\vLambda=(\vLambda^{(1)},\ldots,\vLambda^{(L-1)})$ be an arbitrary element in $G_{\mathrm{sign}}$. Let $\fM$ be the set of fixed points of $\vLambda$. Since $\vzero\in \fM$, $\fM\neq \emptyset$. Pick any $\vtheta=\left(\vW^{(l)}, \vb^{(l)}\right)_{l=1}^L\in \fM$ and any $\vx\in \sR^d$. For simplicity, we write $F(\vtheta)(\vx)$ as $F$. Since the action of $\vLambda$ is linear and orthogonal, by Remark~\ref{rmk:orthogonal}, to show that $\vLambda$ is an infinitesimal invariant map, it suffices to prove $\frac{\partial F}{\partial \vW^{(l)}}=\vLambda^{(l)}\frac{\partial F}{\partial \vW^{(l)}}\vLambda^{(l-1)}$ and $\frac{\partial F}{\partial \vb^{(l)}}=\vLambda^{(l)}\frac{\partial F}{\partial \vb^{(l)}}$ for $l=1,\ldots,L$.

    Since $\vtheta\in \fM$, $\vLambda^{(l)}\vW^{(l)}\vLambda^{(l-1)}=\vW^{(l)}$ and $\vLambda^{(l)}\vb^{(l)}=\vb^{(l)}$ for $l=1,\ldots,L$. For $l=1,\ldots,L$, define $I_l=\{i\in \{1,\ldots,n_l\}\mid \vLambda^{(l)}_{ii}=-1\}$ (note $I_0$ and $I_L$ are empty sets as $\vLambda^{(0)}=\vI, \vLambda^{(L)}=\vI$).
    Since $\vLambda^{(1)} \vW^{(1)}\vLambda^{(0)}=\vW^{(1)}$, the $j$-th row $\vW^{(1)}_j=\vzero$ for all $j\in I_1$. Similarly, $\vb^{(1)}_j=0$ for all $j\in I_1$. Thus $\vz^{(1)}_j=0$ for all $j\in I_1$. Since $\sigma(0)=0$, $\va^{(1)}_j=0$ for all $j\in I_1$.

    Next, we prove by induction that $\va^{(l)}_j=0$ for all $l\in\{1,\ldots,L\}$ and $j\in I_l$. Assume that $\va^{(l)}_j=0, \forall j\in I_l$ holds for some $l \in \{1,\ldots,L-1\}$.
    Since $\vLambda^{(l+1)}\vW^{(l+1)}\vLambda^{(l)}=\vW^{(l+1)}$, we know $\vW^{(l+1)}_{ij}=0$ if $i\in I_{l+1}$ and $j\notin I_{l}$, or if $i\notin I_{l+1}$ and $j\in I_{l}$.
    Since $\vLambda^{(l+1)}\vb^{(l+1)}=\vb^{(l+1)}$, we have $\vb^{(l+1)}_i=0, \forall i\in I_{l+1}$.
    Consider $\vz^{(l+1)}_i=\sum_{j=1}^{n_l}\vW^{(l+1)}_{ij} \va^{(l)}_j+\vb^{(l+1)}_i$ for any $i\in I_{l+1}$.
    If $j\notin I_l$, then $\vW^{(l+1)}_{ij}=0$.
       If $j \in I_l$, then $\va_j^{(l)}=0$ by the induction hypothesis.
    In both cases, $\vW^{(l+1)}_{ij} \va^{(l)}_j=0$. As $\vb^{(l+1)}_i=0$, we have $\vz^{(l+1)}_i=0, \forall i\in I_{l+1}$.  If $l=L-1$, then $\va^{(l+1)}=\vz^{(l+1)}.$ So we have $\va^{(l+1)}_i=0, \forall i\in I_{l+1}$. If $l<L-1$, then 
 $\va^{(l+1)}=\sigma(\vz^{(l+1)})$. Since $\sigma(0)=0$, it  holds that $\va^{(l+1)}_i=0, \forall i\in I_{l+1}$. In both cases, we have $\va^{(l+1)}_i=0, \forall i\in I_{l+1}$. 
 By mathematical induction, $\va^{(l)}_j=0$ for all $l\in \{1,\ldots,L\}, j\in I_l$.

    Next, we prove by backward induction that $\vdelta^{(l)}_i=0$ for all $l\in \{1,\ldots,L\}, i\in I_l$.
    When $l=L$, $I_L=\emptyset$, so the statement holds vacuously.
    Assume $\vdelta^{(l)}_i=0, \forall i\in I_{l}$ for some $l\in \{2,\ldots,L\}$. We have $\vdelta^{(l-1)}_i=\sigma'(\vz^{(l-1)}_i) \sum_{j=1}^{n_l} \vW^{(l)}_{ji}\vdelta^{(l)}_j$. For any $i\in I_{l-1}$, if $j\in I_l $, then $\vdelta^{(l)}_j=0$ by the induction hypothesis. If $j\notin I_l $, then $\vW^{(l)}_{ji}=0$ by the fixed point condition $\vLambda^{(l)}\vW^{(l)}\vLambda^{(l-1)}=\vW^{(l)}$, as $i \in I_{l-1}$.
    Thus, the sum is zero, and $\vdelta^{(l-1)}_i=0, \forall i\in I_{l-1}$. By induction, $\vdelta^{(l)}_i=0, \forall l\in \{1,\ldots,L\}, i\in I_l$.

    For any $l=1,\ldots,L$, $\frac{\partial F}{\partial \vW^{(l)}} = \vdelta^{(l)} (\va^{(l-1)})^\T$. Since $\vdelta^{(l)}_i=0, \forall i \in I_l$, the $i$-th row of $\frac{\partial F}{\partial \vW^{(l)}}$ is zero. Since $\va^{(l-1)}_j=0, \forall j \in I_{l-1}$, the $j$-th column is zero. This implies $\frac{\partial F}{\partial \vW^{(l)}}=\vLambda^{(l)}\frac{\partial F}{\partial \vW^{(l)}}\vLambda^{(l-1)}$.
    For any $l=1,\ldots,L$, $\frac{\partial F}{\partial \vb^{(l)}} = \vdelta^{(l)}$. Since $\vdelta^{(l)}_i=0, \forall i\in I_l$, this implies $\frac{\partial F}{\partial \vb^{(l)}}=\vLambda^{(l)}\frac{\partial F}{\partial \vb^{(l)}}$.
    Therefore, $\vLambda$ is an infinitesimal invariant map.

(ii)  Assume $\sigma'(0)=0$.  Let $\vLambda=(\vLambda^{(1)},\ldots,\vLambda^{(L-1)})$ be an arbitrary element in $G_{\mathrm{sign}}'$.  Let $\fM$ be the set of fixed points of $\vLambda$. Since $\vzero\in \fM$, $\fM\neq \emptyset$. Pick any $\vtheta=\left(\vW^{(l)}, \vb^{(l)}\right)_{l=1}^L\in \fM$. Similar to (1), to show $\vLambda$ is an infinitesimal invariant map, we only need to prove $\frac{\partial F}{\partial \vW^{(l)}}=\vLambda^{(l)}\frac{\partial F}{\partial \vW^{(l)}}$ and $\frac{\partial F}{\partial \vb^{(l)}}=\vLambda^{(l)}\frac{\partial F}{\partial \vb^{(l)}}$ for $l=1,\ldots,(L-1)$.

For $l=1,\ldots,(L-1)$, define $I_l=\{i\in \{1,\ldots,n_l\}\mid \vLambda^{(l)}_{ii}=-1\}$. Since $\vtheta\in \fM$, $\vLambda^{(l)}\vW^{(l)}=\vW^{(l)}$ and $\vLambda^{(l)}\vb^{(l)}=\vb^{(l)}$ for $l=1,\ldots,(L-1)$.
    This implies that for any $l=1,\ldots,(L-1)$ and $j\in I_l$, the $j$-th row $\vW^{(l)}_j=\vzero$ and $\vb^{(l)}_j=0$.
    Thus, $\vz^{(l)}_j = (\vW^{(l)}\va^{(l-1)})_j + \vb^{(l)}_j = \vzero \cdot \va^{(l-1)} + 0 = 0$ for all $j\in I_l$.
         For $l<L$, $\vdelta^{(l)} = \text{diag}(\sigma'(\vz^{(l)})) (\vW^{(l+1)})^\T \vdelta^{(l+1)}$. Since $\vz^{(l)}_j=0$ for $j \in I_l$, the $j$-th diagonal entry $\sigma'(\vz^{(l)}_j) = \sigma'(0) = 0$, which implies $\vdelta^{(l)}_j=0$.
    Thus, for all $l=1,\ldots,(L-1)$, we have $\vdelta^{(l)}_j=0$ for all $j\in I_l$.

    Since $\frac{\partial F}{\partial \vW^{(l)}} = \vdelta^{(l)} (\va^{(l-1)})^\T $ and $\frac{\partial F}{\partial \vb^{(l)}} = \vdelta^{(l)}$, the $j$-th row of both $\frac{\partial F}{\partial \vW^{(l)}}$ and $\frac{\partial F}{\partial \vb^{(l)}}$ is zero for all $j\in I_l$.
    This directly implies $\frac{\partial F}{\partial \vW^{(l)}}=\vLambda^{(l)}\frac{\partial F}{\partial \vW^{(l)}}$ and $\frac{\partial F}{\partial \vb^{(l)}}=\vLambda^{(l)}\frac{\partial F}{\partial \vb^{(l)}}$ for all $l=1,\ldots,(L-1)$. Thus, $\vLambda$ is an infinitesimal invariant map.
\end{proof}

\subsection{Proof of Proposition~\ref{example:sim induced by permutation symmetry group}}

\label{sec:proof of sim induced by permutation symmetry}
\begin{proof}
For two-layer neural networks of width $m$, $G_{\mathrm{per}}$ is simply the group of $m\times m$ permutation matrices, denoted by $S_m$ . Since it is isomorphic to the symmetric group of degree $m$, we sightly abuse the notation by denoting $S_m$ to be the symmetric group of degree $m$ in the proof. 
The action of $S_m$ on the parameter space is given by:
$$\pi: (a_i,\vw_i)_{i=1}^m \mapsto (a_{\pi^{-1}(i)}, \vw_{\pi^{-1}(i)})_{i=1}^m ,\forall \pi\in S_m.$$
For any parameter vector $\vtheta = (a_i, \vw_i)_{i=1}^m \in \mathbb{R}^{(d+1)m}$, we define an equivalence relation $\sim$ on the set of indices $\{1, \ldots, m\}$ such that $i \sim j$ if and only if $(a_i, \vw_i) = (a_j, \vw_j)$. This relation induces a partition of $\{1, \ldots, m\}$, which we denote by $\mathcal{P}_{\vtheta} = \{B_1, \ldots, B_s\}$.

Fix any $\vtheta=(a_i,\vw_i)_{i=1}^m \in \sR^{(d+1)m}$.
Let $S(\vtheta)$ be the stabilizer subgroup of $\vtheta$ in $S_m$. An element $\pi \in S_m$ belongs to $S(\vtheta)$ if and only if $\pi(\vtheta) = \vtheta$, i.e. $(a_{\pi^{-1}(i)}, \vw_{\pi^{-1}(i)}) = (a_i, \vw_i),\forall i \in \{1, \ldots, m\}$.
By definition of $\fP_{\vtheta}$,
this holds if and only if for each $i$, the indices $i$ and $\pi^{-1}(i)$ belong to the same block in the partition $\mathcal{P}_{\vtheta}$. This is true if and only if $\pi$ permutes the indices within each block $B_l$ for $l=1, \ldots, s$.
Consequently, 
\begin{equation}
S(\vtheta) = S_{|B_1|} \times S_{|B_2|} \times \cdots \times S_{|B_s|},
\label{eq:stheta}
\end{equation}
where $S_{|B_l|}$ is the symmetric group on the set $B_l$.
One can readily  verify that \\
(i): If $\vtheta\in \fM_{\fP_{\vtheta_0}}$, then $\fP_{\vtheta}=\fP_{\vtheta_0}$. Thus, $S(\vtheta)=S(\vtheta_0)$.\\
(ii): If $\vtheta\notin \fM_{\fP_{\vtheta_0}}$, then $\fP_{\vtheta}\neq\fP_{\vtheta_0}$. Thus, $S(\vtheta)\neq S(\vtheta_0)$.\\
Then by definition of the equivalence class, we have $[\vtheta_0]=\fM_{\fP_{\vtheta_0}}$. Since  $\vtheta_0$ is arbitrary, 
 $\{ \mathcal{M}_{\mathcal{P}} \mid \mathcal{P} \in \mathfrak{P}_m \}$ is the invariant partition induced by $G_{\mathrm{per}}$.
\end{proof}

\subsection{Proof of Proposition~\ref{example:sim induced by combined symmetry of tanh network}}
\label{sec:proof of SIM induced by the combined}
\begin{proof}
For two-layer neural networks, $G_{\mathrm{combine}}$ is simply
$S_2^m \rtimes S_m$ defined in Definition~\ref{def:symmetry group}. With a slight abuse of notation, $S_m$ and $S_2^m$ are denoted to be the symmetric group of order $m$ and $\{-1,1\}^m$ ($m$ products of the group $\{-1,1\}$), respectively.
Then an element of $G$ is a pair $(\vdelta, \pi)$ where $\vdelta = (\delta_1, \ldots, \delta_m) \in \{-1,1\}^m$ and $\pi \in S_m$. The action of the pair $(\vdelta,\pi)$
 is given by:
$$
(\vdelta, \pi): (a_i, \vw_i)_{i=1}^m \mapsto \left( \delta_i a_{\pi^{-1}(i)}, \delta_i \vw_{\pi^{-1}(i)} \right)_{i=1}^m.
$$

First, we note that for any $\vtheta \in \mathbb{R}^{(d+1)m}$, we can find a partition $\mathcal{P} \in \mathfrak{P}_m$ and a sign vector $\vgamma \in \{-1,1\}^m$ such that $\vtheta \in \mathcal{M}_{\mathcal{P}, \vgamma}$. Thus, the collection of sets $\{ \mathcal{M}_{\mathcal{P}, \vgamma} \}$ covers the entire parameter space. Therefore, to prove that this collection is the invariant partition of $S_2^m\rtimes S_m$, we only need to show that for any $\fP\in \mathfrak{P}_m, \vgamma\in \{-1,1\}^m$, any  $\vtheta_0 \in \mathcal{M}_{\mathcal{P}, \vgamma}$, the identity $[\vtheta_0] = \mathcal{M}_{\mathcal{P}, \vgamma}$ holds. Now fix any $\mathcal{P}=\{B_1,\ldots,B_s\}\in \mathfrak{P}_m,$ any $ \vgamma=(\gamma_1,\ldots,\gamma_m)\in \{-1,1\}^m$, and any $\vtheta_0 \in \mathcal{M}_{\mathcal{P}, \vgamma}$.
\\
\textbf{Step 1:  Prove $\fM_{\fP,\vgamma} \subset [\vtheta_0]$.} 
Let $\vtheta=(a_i,\vw_i)_{i=1}^m  $ be an arbitrary element in $\mathcal{M}_{\mathcal{P}, \vgamma}$. Denote $G=S_2^m\rtimes S_m$.
An element $(\vdelta, \pi) \in G$ is in  $S(\vtheta)$ if and only if $(\vdelta, \pi) \cdot \vtheta = \vtheta$, which means:
\begin{equation} \label{eq:stabilizer_condition}
(a_i, \vw_i) = \delta_i (a_{\pi^{-1}(i)}, \vw_{\pi^{-1}(i)}) \quad \text{for all } i \in \{1, \ldots, m\}.
\end{equation}
If $(\vdelta, \pi) \in S(\vtheta)$, then $(\vdelta, \pi)$ must satisfy the following conditions:
\begin{enumerate}
    \item From the third condition of $\mathcal{M}_{\mathcal{P}, \vgamma}$, we have $(a_i, \vw_i) \ne \pm(a_j, \vw_j)$ if $i$ and $j$ are in different blocks of the partition $\mathcal{P}$. Equation~\eqref{eq:stabilizer_condition} can only hold if for every block $B_l \in \mathcal{P}$, the permutation $\pi$ maps $B_l$ to itself, i.e., $\pi(B_l) = B_l$.

    \item For any $i \in B_1$, we have $(a_i, \vw_i) = \vzero$. Since $\pi(B_1)=B_1$, $\pi^{-1}(i)$ is also in $B_1$, so $(a_{\pi^{-1}(i)}, \vw_{\pi^{-1}(i)}) = \vzero$. The condition becomes $\vzero = \delta_i \vzero$, which holds for any $\delta_i \in \{-1,1\}$.

    \item For any $i \in B_l$ with $l \in \{2, \ldots, s\}$, we have $(a_i, \vw_i) \ne \vzero$. From the second condition of $\mathcal{M}_{\mathcal{P}, \vgamma}$, we know that $\gamma_i (a_i, \vw_i) = \gamma_j (a_j, \vw_j)$ for any $i, j \in B_l$. Applying this to the pair $i, \pi^{-1}(i) \in B_l$, we get $\gamma_i (a_i, \vw_i) = \gamma_{\pi^{-1}(i)} (a_{\pi^{-1}(i)}, \vw_{\pi^{-1}(i)})$. Substituting this into the stabilizer condition Eq.~\eqref{eq:stabilizer_condition}, we find:
    $$ \gamma_i (a_i, \vw_i) = \gamma_{\pi^{-1}(i)} \left( \frac{1}{\delta_i} (a_i, \vw_i) \right) \implies \delta_i = \frac{\gamma_{\pi^{-1}(i)}}{\gamma_i}. $$
Since $\gamma_k \in \{-1,1\}$, this is equivalent to $\delta_i = \gamma_i \gamma_{\pi^{-1}(i)}$.
\end{enumerate}
\noindent Denote $H$ to be the set of all pairs $(\vdelta, \pi) \in G$ such that:
\begin{enumerate}
    \item $\pi(B_l) = B_l$ for all $l \in \{1, \ldots, s\}$.
    \item $\delta_i \in \{-1,1\}$ is arbitrary for $i \in B_1$.
    \item $\delta_i = \gamma_i \gamma_{\pi^{-1}(i)}$ for all $i \in B_l$ where $l \in \{2, \ldots, s\}$.
\end{enumerate}
By previous argument, $S(\vtheta)\subset H$. 
Conversely, it is easy to verify that Eq.~\eqref{eq:stabilizer_condition} holds whenever $(\vdelta,\pi)\in H$. Therefore $H\subset S(\vtheta)$. By both inclusions, $H=S(\vtheta)$. 
Since $H$ depends only on the partition $\mathcal{P}$ and the sign vector $\vgamma$, all elements of $\mathcal{M}_{\mathcal{P}, \vgamma}$ have the same stabilizer subgroup, and thus $\mathcal{M}_{\mathcal{P}, \vgamma}\subset [\vtheta_0]$. 
\\
\textbf{Step 2:  Prove $[\vtheta_0]\subset\fM_{\fP,\vgamma}$.} Let $\vtheta'=(a_i',\vw_i')_{i=1}^m $ be an arbitrary element in $ [\vtheta_0]$. We will show that $\vtheta' \in \mathcal{M}_{\mathcal{P}, \vgamma}$.
Since $S(\vtheta') = S(\vtheta_0)$, the following conditions hold:
\begin{enumerate}
    \item For any $i \in B_1$, the element $(\vdelta, \text{id})$ where $\delta_i = -1$ and $\delta_j=1$ for $j \ne i$ is in $S(\vtheta_0)$.
    Since $S(\vtheta')=S(\vtheta_0)$, $(\vdelta, \text{id})\in S(\vtheta')$. Therefore $(\vdelta, \text{id})\cdot \vtheta'=\vtheta'$, which implies $(a'_i, \vw'_i) = -(a'_i, \vw'_i)$, so $(a'_i, \vw'_i) = \vzero$. This holds for all $i \in B_1$.
    \item For any $l \in \{2, \ldots, s\}$ and any $i,j \in B_l$, let $\pi_{ij}$ be the transposition of $i$ and $j$. The element $(\vdelta, \pi_{ij})$ with $\delta_k = \gamma_k \gamma_{\pi_{ij}^{-1}(k)}$ is in $S(\vtheta_0)$. Applying it to $\vtheta'$ at index $i$ gives $(a'_i, \vw'_i) = \delta_i (a'_j, \vw'_j) = (\gamma_i \gamma_j)(a'_j, \vw'_j)$, which implies $\gamma_i (a'_i, \vw'_i) = \gamma_j(a'_j, \vw'_j)$.
    \item If $\vtheta'$ violated the third condition, i.e., if $(a'_i, \vw'_i) = \pm (a'_j, \vw'_j)$ for some $i \in B_l, j \in B_{l'}$ with $l \ne l'$, then $S(\vtheta')$ would contain elements $(\vdelta, \pi)$ where $\pi(i)=j$. Such elements are not in $S(\vtheta_0)$, contradicting $S(\vtheta_0)=S(\vtheta')$.
\end{enumerate}
Thus, $\vtheta'$ must satisfy all three conditions defining $\mathcal{M}_{\mathcal{P}, \vgamma}$. So
$\vtheta'\in \mathcal{M}_{\mathcal{P}, \vgamma}$. Thus $[\vtheta_0]\subset \mathcal{M}_{\mathcal{P}, \vgamma}$.

\end{proof}

\subsection{Proof of Corollary~\ref{cor:all rank of lie closure}}
\label{sec:proof of all rank of lie closure}
\begin{proof}
We claim that, for two-layer neural networks, the calculation of Lie algebra is neuron-wise. We begin with the following definitions.  
Define the  network of width $k$ as  ${F_k}(\vtheta)(\vx)=\sum_{i=1}^k a_i\sigma(\vw_i^\T\vx),\vx\in \sR^d, \vtheta=(a_i,\vw_i)\in \sR^{(d+1)k}$, and define $\fF_k=\{\nabla_{\vtheta}F_k(\cdot)(\vx)\mid \vx\in \sR^d\}$. Denote $\mathrm{Lie}(\fF_k)$ to be 
 the Lie algebra  generated by $\fF_k$. 
 Define $\fF_k'=\{(X(a_i,\vw_i))_{i=1}^k\mid X\in \mathrm{Lie}(\fF_1)\}$, which is a family of vector fields on $\sR^{(d+1)k}$. The specific model considered is ${F}(\vtheta)(\vx)=\sum_{i=1}^m a_i\sigma(\vw_i^\T\vx),\vx\in \sR^d, \vtheta=(a_i,\vw_i)\in \sR^{(d+1)m}$ for fixed $m\in \sN^+$.  For notional simplicity, 
 we omit the subscript $m$,
 using $F,\mathrm{Lie}(\fF), \fF'$ to denote $F_m,\mathrm{Lie}(\fF_m), \fF_m'$, respectively.

For any positive integer \( k \), and any \( X_1, \ldots, X_k \in \fF_1 \), define \( Y_j = (X_j(a_i, \vw_i))_{i=1}^m \) for \( j = 1, \ldots, k \). Define the nested Lie brackets \( X = [X_1, [X_2, [\cdots [X_{k-1}, X_k] \cdots ]]] \) and \( Y = [Y_1, [Y_2, [\cdots [Y_{k-1}, Y_k] \cdots ]]] \). It is straightforward to verify by induction that \( Y = (X(a_i, \vw_i))_{i=1}^m \).

Let \( \mathfrak{g}^{(k)} \) and \( \mathfrak{g}_1^{(k)} \) denote the \( k \)-th terms in the lower central series of \( \fF \) and \( \fF_1 \), respectively. The lower central series \( \mathfrak{g}^{(1)}, \mathfrak{g}^{(2)}, \ldots \) of a family of vector fields \( \tilde{\fF} \) is defined recursively by \( \mathfrak{g}^{(1)} = \tilde{\fF} \) and \( \mathfrak{g}^{(k+1)} = [\mathfrak{g}, \mathfrak{g}^{(k)}] \), where the bracket denotes the Lie bracket. 
From the identity \( Y = (X(a_i, \vw_i))_{i=1}^m \), and the fact that \( \fF = \{ (X(a_i, \vw_i))_{i=1}^m \mid X \in \fF_1 \} \), it follows that \( \mathfrak{g}^{(k)} = \{ (X(a_i, \vw_i))_{i=1}^m \mid X \in \mathfrak{g}_1^{(k)} \} \) for all \( k \in \sN^+ \). Consequently, we conclude that \( \fF' = \mathrm{Lie}(\fF) \).

(i) Assume the activation function to be a generic activation.
Fix any $\vtheta=(a_i,\vw_i)_{i=1}^m\in \sR^{(d+1)m}$. By Proposition~\ref{example:sim induced by permutation symmetry group}, $[\vtheta]=\fM_\fP$ for some partition $\fP=\{B_1,\ldots,B_s\}$ of $\{1,\ldots,m\}$. By definition of $\fM_{\fP}$, we have
$\dim([\vtheta])=\dim(\fM_{\fP})=(d+1)s$. Next we calculate $\mathrm{dim(Lie_{\vtheta}(\fF)})$. Since $\mathrm{Lie}(\fF)=\fF'$, we have $\mathrm{Lie_{\vtheta}(\fF)}=\fF'|_{\vtheta}$. For each $p=1,\ldots,s$, select $k_p\in B_p$. We then define $\vtheta'=(a_{k_p}, \vw_{k_p})_{p=1}^s$. Similarly, $\mathrm{Lie_{\vtheta'}(\fF_s)}=\fF_s'|_{\vtheta'}$.
Define a linear map $P:\fF'|_{\vtheta}\to \sR^{(d+1)s}$ by: $(a_i^*,\vw_i^*)_{i=1}^m\mapsto (a^*_{k_p}, \vw^*_{k_p})_{p=1}^s,\forall (a_i^*,\vw_i^*)_{i=1}^m\in \fF'|_{\vtheta}$. 
By definition of $\fF_s'|_{\vtheta'}$,  $P$ is a surjection onto $\fF_s'|_{\vtheta'}$ . It implies that $\mathrm{dim(Lie}_{\vtheta}(\fF))=\dim(\fF'|_{\vtheta})\geq \dim(\fF_s'|_{\vtheta'})=\mathrm{dim(Lie}_{\vtheta'}(\fF_s))$. Since $\vtheta\in \fM_\fP$, it follows that $(a_{k_p}, \vw_{k_p})\neq (a_{k_q}, \vw_{k_q})$ for any $p,q\in \{1,\ldots,s\}$ and $p\neq q$.
By Corollary~\ref{cor:pertube to non-degenerate},  $\mathrm{dim(Lie_{\vtheta'}(\fF_s)})=(d+1)s$. Therefore $\mathrm{dim(Lie_{\vtheta}(\fF)})\geq\mathrm{dim(Lie_{\vtheta'}(\fF_s)})= (d+1)s$. By Theorem~\ref{thm:Hermann-Nagano},  
the tangent space of $O_{\fF}(\vtheta)$ at $\vtheta$ is $\mathrm{Lie}_{\vtheta}(\fF)$.  By Lemma~\ref{lemma: invariant partition}, $[\vtheta]$ is a SIM. 
So
$O_{\fF}(\vtheta)\subset [\vtheta]$. Take this inclusion to tangent space gives $\mathrm{dim(Lie_{\vtheta}(\fF)})\leq (d+1)s$. Thus, $\mathrm{dim(Lie_{\vtheta}(\fF)})= (d+1)s$.

(ii) We now consider the case where the activation function is a generic odd function and the symmetry group is the combined orthogonal group. The proof is analogous to that of (i); hence, we omit several details for brevity.
By  Proposition~\ref{example:sim induced by combined symmetry of tanh network}, we have $[\vtheta]=\fM_{\fP,\vgamma}$ for some $\fP=\{B_1,\ldots,B_s\}$ and $\vgamma\in \{-1,1\}^m$. Then dimension is therefore $\mathrm{dim}([\vtheta])=(d+1)(s-1)$. A similar argument establishes that 
$\mathrm{Lie}_{\vtheta}(\fF)\geq \mathrm{Lie}_{\vtheta'}(\fF_{s-1})$, where $\vtheta'=(a_{k_p},\vw_{k_p})_{p=2}^s$ for $k_p\in B_p$ with $p=2,\ldots,s$. Subsequently, by Corollary~\ref{cor:pertube to non-degenerate}, it follows that $\mathrm{dim(Lie_{\vtheta'}(\fF)})=(d+1)(s-1)$. Then 
some straightforward
reasoning leads to $\mathrm{dim(Lie_{\vtheta}(\fF)})=\mathrm{dim}([\vtheta])= (d+1)(s-1)$.

\end{proof}

\section{Experiment Details}
\subsection{Figure~\ref{fig:invariance}}
\label{sec:details of invariance}
In the experiment,
we consider a two-layer neural network with a $\tanh$ activation function. The network has an input dimension of 20, a hidden layer with 100 neurons, and a single output unit. We initialize the network such that it lies on a SIM by setting the initialization of the first two hidden neurons to be identical, i.e., $a_1(0)=a_2(0),\vw_1(0)=\vw_2(0)$. 

The model is then trained using  SGD with learning rate $0.1$. To rigorously test the data-independence, we employ a training scheme where, at each step, a new mini-batch of data (inputs and targets) is drawn from a standard normal distribution. 
We conduct three independent training runs, each with a different loss function:

\begin{enumerate}
    \item Mean Squared Error (MSE) Loss:
    $
    l_{\text{MSE}}(\hat{y}, y) = (\hat{y} - y)^2.
    $
    
    \item Binary Cross-Entropy (BCE) Loss with Logits:
    \[
    l_{\text{BCE}}(\hat{y}, y) = - y \cdot \log(\phi(\hat{y})) - (1 - y) \cdot \log(1 - \phi(\hat{y})),
    \]
    where
    $
    \phi(\hat{y}) = \frac{1}{1 + e^{-\hat{y}}}
    $
    is the sigmoid activation function.
    
    \item Log-Cosh Loss:
$l_{\text{LogCosh}}(\hat{y}, y) = \log\left(\cosh(\hat{y} - y)\right).$
\end{enumerate}

We track the Euclidean distance \(\|\boldsymbol{\theta}_1(t) - \boldsymbol{\theta}_2(t)\|_2\) between the parameters of two initially identical neurons during training. As shown in Figure~\ref{fig:invariance}, this distance remains below \(10^{-50}\) over all 2000 training steps, effectively zero at every iteration. This invariance holds across all loss functions tested, and persists even when training data is resampled at each step. The experimental results support that the invariance of SIMs  is independent of both the data and the loss function. Furthermore, the results suggest that this invariance is also relevant to practical optimization methods such as SGD.

\subsection{Figure~\ref{fig:normal_dynamics}}

\label{sec:details of normal dynamics}

The model under consideration is a two-layer neural network with input dimension 2 and hidden layer width 2. The parameters of the first hidden neuron are initialized by sampling from a Gaussian distribution with mean zero and standard deviation 0.1. The second hidden neuron is initialized by adding an independent perturbation drawn from a Gaussian distribution with mean zero and standard deviation \(\varepsilon\) to the parameters of the first neuron. We consider \(\varepsilon = 0\) (corresponding to exact initialization on the structural invariant manifold, SIM), as well as \(\varepsilon = 10^{-1}, 10^{-3}, 10^{-5}\), to systematically investigate the effect of small deviations from the manifold.

The target function is defined by a width-1 two-layer neural network with all weights set to 1. The training dataset consists of four input vectors \(\mathbf{x}\), sampled independently from a standard Gaussian distribution. The corresponding labels \(y\) are generated by passing \(\mathbf{x}\) through the target network.

The model is trained using gradient descent with a learning rate of 0.3 to accelerate convergence. We define the neuron distance as the Euclidean norm between the parameter vectors of the two hidden neurons, i.e., \(\|\boldsymbol{\theta}_1 - \boldsymbol{\theta}_2\|_2\).

To evaluate test performance, we generate 100 input vectors \(\mathbf{x}\) sampled from a standard normal distribution and compute the test loss as the \(\ell_2\) distance between the outputs of the trained network and those of the target function.

\section*{Acknowledgment}

We are grateful to Leyang Zhang for his valuable insights and suggestions for refining this paper\footnote{Email: leyangz\_hawk@outlook.com}. This work is sponsored by the National Key R\&D Program of China Grant No. 2022YFA1008200 (Y.Z., T.L.), Natural Science Foundation of China No. 12571567 (Y.Z.), Natural Science Foundation of Shanghai No. 25ZR1402280 (Y.Z.), Shanghai Institute for Mathematics and Interdisciplinary Sciences Grant No. SIMIS-ID-2025-ST (T.L.).

\bibliography{sample1}
\bibliographystyle{ws-m3as}

\end{document}